\documentclass[12pt]{amsart}
\usepackage{amsmath,amssymb,latexsym,amsmath,amsthm,amscd}
\usepackage{setspace}
\usepackage[all]{xy} \xyoption{arc}
\usepackage[left=3cm,top=2cm,right=3cm,bottom = 2cm]{geometry}
\usepackage{graphicx}
\usepackage[usenames,dvipsnames]{color}
\usepackage{hyperref}
%Change the fonts:
%For available fonts, open psnfss2e in miktex/doc/latex/psnfss
%(on my system!)
\usepackage{charter}

\theoremstyle{plain}
\newtheorem{thm}{Theorem}
\newtheorem{lem}[thm]{Lemma}
\newtheorem{prop}[thm]{Proposition}

\theoremstyle{definition}

\theoremstyle{remark}
\newtheorem{rmk}[thm]{Remark}

%Caligraphic 

%Bold letters 

%Fraktur letters

%Greek shortcuts

%Category names

%Some common math operators

%\DeclareMathOperator{\id}{id}

%Some common abreviations

%Sets, notations, etc.

\newcommand{\CC}{\mathbb{C}}

\newcommand{\ZZ}{\mathbb{Z}}

\newcommand{\RR}{\mathbb{R}}

%Local commands
%\newcommand{\tf}{\widetilde{f}}

\newcommand{\vac}{\mathbf{1}}
\newcommand{\Nfour}{\mathcal{A}_{N=4}}
\newcommand{\Ntwo}{\mathcal{A}_{N=2}}
\newcommand{\half}{\frac{1}{2}}
\newcommand{\abar}{\overline{a}}
\newcommand{\bbar}{\overline{b}}
\newcommand{\dlangle}{\langle\hspace{-0.8 mm}\langle} %double angle brackets
\newcommand{\drangle}{\rangle\hspace{-0.8 mm}\rangle}

\DeclareMathOperator{\End}{End}
\DeclareMathOperator{\Id}{Id}
\DeclareMathOperator{\slt}{\mathfrak{sl}_2}
\DeclareMathOperator{\slthat}{\widehat{\mathfrak{sl}}_2}

\pagenumbering{arabic}
\pagestyle{headings}
\setcounter{secnumdepth}{4}
\setcounter{tocdepth}{2}
\setlength{\parindent}{1cm}

\begin{document}
%Title
\title{$N{=}2$ and $N{=}4$ subalgebras of super vertex operator algebras}
\author{Geoffrey Mason, Michael Tuite and Gaywalee Yamskulna}
\address{Department of Mathematics\\
University of California, Santa Cruz}
\email{gem@ucsc.edu}
\address{School of Mathematics, Statistics and Applied Mathematics\\ 
National University of Ireland Galway, Galway, Ireland}
\email{michael.tuite@nuigalway.ie}
\address{Department of Mathematics\\ Illinois State University, Normal, IL}
\email{gyamsku@ilstu.edu}
\thanks{The first author was supported by the NSF and the Simons Foundation \#427007.
The third author was supported by the Simons Foundation Collaboration Grant \# 207862.}

\keywords{$N{=}4$ algebra, super vertex operator algebra}
\subjclass{Primary, Secondary }

\begin{abstract} We develop criteria to decide if an $N{=}2$ or $N{=}4$ super conformal algebra
is a subalgebra of a super vertex operator algebra in general, and of a super lattice theory in particular.\ We give some specific examples.
\end{abstract}
\maketitle
 
\section{Introduction}
The advent of Mathieu Moonshine \cite{EOT} in recent years has brought renewed attention to the
super conformal $N{=}4$ algebra $\Nfour$.
Song has shown \cite{S} that for central charge $c{=}6$, $\Nfour$ is the algebra of global sections of the chiral de Rham complex on a $K3$ surface (following earlier results for hyperk\"ahler manifolds \cite{BZHS}), clarifying the
connection between $K3$ and $\Nfour$ as long understood by physicists. 

\medskip
All of this means that one can expect $\Nfour$ to play a ubiquitous r\^{o}le in the further investigation of
these subjects, much as the Virasoro algebra does in general CFT.\ Now the precise definition
of $\Nfour$ is awkward, to say the least. For  $c{=}6$,  it is usually described as a subtheory of the algebra of
$12$ free fermions.\ When we found ourselves looking for $\Nfour$ in a super lattice theory 
containing no free fermions \cite{MTY}, there was a dearth of results in the literature to which we could turn.\

\medskip
The purpose of the present paper is to alleviate this situation.\ We prove two general recognition theorems which allow
one to identify $\Nfour$ as a subalgebra of a suitable Super Vertex Operator Algebra (SVOA) with just a few well-chosen axioms.\ Experts
will perhaps not be surprised by the results, however some effort is required to obtain efficient characterizations, i.e., without too many assumptions.\  It would be of interest if a genuine reduction in the number of
axioms needed can be achieved in our  recognition theorems.\ Our main results are as follows (unexplained notation is clarified below).
\begin{thm}\label{thmUN=4} 
Let $U$ be a SVOA of CFT--type.\ 
Let  $V{\subseteq} U$ be the subalgebra generated by 4 \emph{primary vectors} of weight $\frac{3}{2} $ in $U$, so that 
\begin{align*}
V{=}\CC\mathbf{1} {\oplus} V_{\frac{1}{2}} {\oplus} V_1{\oplus} V_{\frac{3}{2} } {\oplus}\hdots
\end{align*}
is a conformally graded subspace of $U$.\ Assume that the following  hold:
\begin{enumerate}
	\item[(I)] The subspace of $V_{\frac{3}{2} }$ spanned by the four generators decomposes as $A{\oplus}B$, a pair of 2-dimensional vector representations for $\slt$, 
	where
	\item[(II)] $A(1)B\cong \slt$, 
	\item[(III)]  $A(0)A{=}B(0)B{=}0$,
  \item[(IV)] $ T\slt \cap A(0)B\neq 0$. 
\end{enumerate}
Then $V{\cong} \Nfour$ with central charge $c{=}6k$, where $k{\in}\CC$ is the level of the $\slthat$ Kac-Moody subalgebra generated by $A(1)B$. 
\end{thm}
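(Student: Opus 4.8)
The plan is to produce inside $V$ the canonical generating fields of $\Nfour$ at level $k$ — an affine $\slthat$-current subalgebra, a Virasoro vector $\omega$ of central charge $6k$, and the four primary supercurrents — then to verify the defining operator products among them, and finally to observe that $V$ is generated by these fields, so that $V\cong\Nfour$. Three devices will be used repeatedly: (i) the Lie algebra $\fraks:=A(1)B\cong\slt$ of hypothesis (II) acts on all of $U$ by zero-modes $x\mapsto x_{(0)}$, preserving conformal weight, so every $n$-th product map $M\otimes N\to U$ between $\fraks$-submodules is $\fraks$-equivariant; (ii) conformal-weight bookkeeping — if $a,b$ have weight $\tfrac32$ then $a_{(n)}b$ has weight $2-n$; and (iii) the skew-symmetry and Borcherds identities, which pin down all structure constants once the lowest products are known.

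First I would fix an $\slt$-triple in $\fraks$ and, using (I), bases of $A$ and $B$ realising each as the $2$-dimensional irreducible $\slt$-module. The modes $\{x_{(n)}:x\in\fraks,\ n\in\ZZ\}$ generate a homomorphic image of $\slthat$: $x_{(0)}$ is the bracket, $x_{(n)}y=0$ for $n\ge 2$ by weight, and $x_{(1)}y=\langle x,y\rangle\mathbf 1\in V_0$ defines a symmetric invariant form on the simple Lie algebra $\slt$, hence $\langle\cdot,\cdot\rangle=k\cdot(\text{normalised form})$; this scalar $k$ is the level. Next, skew-symmetry gives $a_{(0)}b=b_{(0)}a-T(b_{(1)}a)$ for $a,b$ of weight $\tfrac32$ (the higher terms vanish for weight reasons, since $b_{(2)}a\in\CC\mathbf 1=\ker T$); feeding in (III) and the injectivity of $T$ off $\CC\mathbf 1$ yields $A(1)A=B(1)B=0$, and then a two-line check of the consistency of the anticommutator $\{a_{(m)},b_{(n)}\}$ via the Borcherds identity forces $A(2)A=B(2)B=0$ as well. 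Thus $A$ and $B$ behave as the anticommuting supercurrent doublets of $\Nfour$.

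The heart of the argument is the conformal vector. As an $\slt$-module $A\otimes B\cong\mathbf 1\oplus\mathrm{ad}$; the equivariant map $a\otimes b\mapsto a_{(0)}b$ sends the trivial summand to an $\slt$-fixed vector $\omega_0\in V_2$ and the adjoint summand to a submodule of $V_2$, necessarily $0$ or a copy of $\mathrm{ad}$. Hypothesis (IV) says this submodule meets $T\slt=L_{-1}\fraks\cong\mathrm{ad}$ nontrivially; since $\omega_0$ cannot lie in $T\fraks$ and $\mathrm{ad}$ is irreducible, the adjoint part of $A(0)B$ is exactly $T\fraks$, so $A(0)B=\CC\omega_0\oplus T\fraks$. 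I would then show that, for the right normalisation, $\omega:=\lambda\,\omega_0$ (that is, $\lambda$ times the image of the $\slt$-invariant in $A\otimes B$) is a genuine conformal vector: a Borcherds-identity computation with $\omega$ and a generator $c\in A\cup B$, in which (III) kills the dangerous terms, shows $\omega_{(1)}c=\tfrac32 c$ and $\omega_{(n)}c=0$ for $n\ge 2$, so that $A$ and $B$ are primary of weight $\tfrac32$; a similar check gives $\omega_{(0)}=T=L_{-1}$ on the generators; and the remaining relation forces $\omega_{(3)}\omega=\tfrac c2\mathbf 1$ with $c=6k$. With $\omega$ in hand the currents $\fraks$ are primary of weight $1$, all products of $\{\omega\}\cup\fraks\cup A\cup B$ close on normal-ordered polynomials in these fields, and consequently $V_{1/2}=0$, $V_1=\fraks$, $V_{3/2}=A\oplus B$, and $V$ is strongly generated by $\omega,\fraks,A,B$.

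It then remains to read off the surviving operator products — $A(2)B$ is the central term $\propto k\,\mathbf 1$, $A(1)B=\fraks$ gives the currents (II), $A(0)B=\CC\omega\oplus T\fraks$ gives the $2\omega+T(\text{current})$ term, and $\fraks$ acts on $A,B$ through the doublet with no higher modes since $\fraks_{(n)}A\subseteq V_{1/2}=0$ for $n\ge 1$ — and to observe that $\slt$-equivariance and weight force each of these to be determined up to the scalars just named, which the Borcherds identity fixes to the values in the defining relations of $\Nfour$ at level $k$. This yields a surjective vertex-algebra homomorphism from the universal $N{=}4$ superconformal algebra at level $k$ onto $V$, and since $V$ is generated by fields satisfying exactly those relations we get $V\cong\Nfour$ with $c=6k$. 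I expect the conformal-vector step to be the main obstacle: extracting $\omega$ from $A(0)B$ past the $T\fraks$-ambiguity (where (IV) is used essentially), and then verifying against every generator that it is Virasoro of the stated central charge with $A,B$ primary, keeping all the normalisation constants emerging from the Borcherds identity mutually consistent.
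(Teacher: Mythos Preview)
Your approach is the paper's: establish the $\slthat$ current algebra at level $k$, use (III) together with super skew-symmetry to get $A(n)A=B(n)B=0$ for $n\ge 0$, take the $\slt$-invariant $\sigma\in A(0)B$ as the conformal vector, invoke (IV) to force the adjoint part of $A(0)B$ to coincide with $T\slt$, and then verify the $N{=}4$ relations one by one until $\sigma$ is seen to be Virasoro of central charge $6k$.

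One step needs reordering. You deduce $\fraks_{(1)}(A\oplus B)=0$ from $V_{1/2}=0$, but $\fraks_{(1)}(A\oplus B)\subseteq V_{1/2}$ is precisely the kind of product you must kill in order to \emph{conclude} $V_{1/2}=0$ in the generated subalgebra, so the argument as written is circular (and ``all products close'' is the assertion to be proved, not an input). The paper breaks the circle by proving $u(1)v=0$ for $u\in\slt$, $v\in A\oplus B$ directly: write, e.g., $x^{+}=-\tfrac12\,\tau^{+}(1)\overline\tau^{+}$, expand $x^{+}(1)\tau^{+}$ by associativity, and use $A(n)A=0$ for $n\ge 0$ and skew-symmetry to see that the two expressions for $x^{+}(1)\tau^{+}$ differ by a sign, hence vanish; the rest follows by acting with $x^{-}(0)$. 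This lemma is not cosmetic: it gives $[h(1),\tau^{\pm}(n)]=\pm\tau^{\pm}(n{+}1)$, which is what lets you compute $\tau^{\pm}(2)\overline\tau^{\mp}=\pm 2k\vac$ and check $\sigma(1)h=h$ (so in particular $\sigma\neq 0$). With that one fix your outline matches the paper's proof.
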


\begin{thm}\label{thmVLV} Let $L$ be a positive-definite, odd, integral lattice of minimum norm $2$ with $V_L$ the corresponding SVOA.\ Let  $V{\subseteq} V_L$ be the subalgebra generated by 4 vectors of weight $\frac{3}{2} $, so that
\begin{align*}
V{=}\CC\mathbf{1} {\oplus} V_1{\oplus} V_{\frac{3}{2} } {\oplus}\hdots,
\end{align*}
and assume that the following  hold:
\begin{enumerate}
	\item[(I)] The subspace of $V_{\frac{3}{2} }$ spanned by the four generators decomposes as $A{\oplus}B$, a pair of 2-dimensional vector representations for $\slt$, 
	where
	\item[(II)] $A(1)B\cong \slt$,
	\item[(III)] $A(1)A{=}B(1)B{=}0$,
	\item[(IV)] $\slt$ contains a root of $L$.
\end{enumerate}
Then $V\cong \Nfour$ with central charge $c{=}6$.
\end{thm}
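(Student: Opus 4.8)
The plan is to deduce the statement from Theorem~\ref{thmUN=4}. First record the structure of $V_L$: since $L$ is positive-definite, $V_L$ is of CFT-type; since $L$ has minimum norm $2$ it has no vector of norm $1$, so $(V_L)_{1/2}=0$ and hence $V_{1/2}=0$ (this is why the displayed grading of $V$ has no weight-$\tfrac12$ summand). The only weight-$\tfrac32$ vectors of $V_L$ are linear combinations of exponentials $e^\beta$ with $\langle\beta,\beta\rangle=3$, and each such vector is killed by $L(1)$ (because $(V_L)_{1/2}=0$) and so is primary. Thus $V$ is generated by four \emph{primary} weight-$\tfrac32$ vectors, and hypotheses (I), (II) here are literally hypotheses (I), (II) of Theorem~\ref{thmUN=4}. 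It therefore suffices to verify Theorem~\ref{thmUN=4}(III)--(IV), i.e.\ $A(0)A=B(0)B=0$ and $T\slt\cap A(0)B\neq0$, and to identify the Kac--Moody level as $k=1$.

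Hypothesis (IV) of the present theorem pins down the level. A $3$-dimensional simple subalgebra $\slt\subseteq(V_L)_1$ containing a root vector $e^\gamma$ (so $\langle\gamma,\gamma\rangle=2$) must equal $\slt^\gamma:=\CC e^\gamma\oplus\CC\,\gamma(-1)\mathbf{1}\oplus\CC e^{-\gamma}$: as $e^\gamma$ acts nilpotently on $(V_L)_1$ it is a nil-positive element of $\slt$, hence lies in an $\slt$-triple $\{e^\gamma,h,f\}$ spanning $\slt$, and grading $(V_L)_1$ by the $\gamma(0)$-eigenvalue forces $h$ to have eigenvalue $0$ and $f$ eigenvalue $-2$; the latter gives $f\in\CC e^{-\gamma}$, and then $h=[e^\gamma,f]\in\CC\,\gamma(-1)\mathbf{1}$. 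By the Frenkel--Kac construction the modes of $\slt^\gamma$ generate $\slthat$ at level $1$ in $V_L$, so once Theorem~\ref{thmUN=4} applies it will give $V\cong\Nfour$ with $c=6k=6$. This identification also gives a concrete model for the generators: in a Chevalley basis of $\slt^\gamma$ the two weight vectors $a^\pm$ of $A$ are supported on norm-$3$ lattice vectors $\beta$ with $\langle\gamma,\beta\rangle=\pm1$, the lowering operator $e^{-\gamma}(0)$ carrying $\mathrm{supp}(a^+)$ onto $\mathrm{supp}(a^-)$, and similarly for $B$.

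With this model the remaining checks are lattice bookkeeping. Skew-symmetry applied to the (odd) weight-$\tfrac32$ vectors of $V_L$ gives $a(1)b=-b(1)a$ and $a(0)b=b(0)a+L(-1)\!\bigl(a(1)b\bigr)$; hence $a\otimes a'\mapsto a(1)a'$ factors through $\wedge^2A\cong\CC$ and $a\otimes a'\mapsto a(0)a'$ factors through $\Sym^2A\cong\slt$, so $A(1)A$ and $A(0)A$ are each cyclic on a single highest weight vector and it is enough to compare $a^+(1)a^-$ with $a^+(0)a^+$. Expanding both in $V_L$ and simplifying with the cocycle identities, $a^+(1)a^-$ splits as a part in $\bigoplus_{\langle\delta,\delta\rangle=2}\CC e^\delta$ plus a part in $\mathfrak{h}_L$, while $a^+(0)a^+$ splits as a part in $\bigoplus_{\langle\delta,\delta\rangle=4}\CC e^\delta$ plus a part in $\mathfrak{h}_L(-1)e^\gamma$; the two pairs are governed by the same two conditions on $\mathrm{supp}(a^-)$ (no $\mu$ with $-(\gamma+\mu)$ also in the support, together with a vanishing relation indexed by the norm-$4$ vectors $\sigma$ with $\langle\gamma,\sigma\rangle=-2$), whence $a^+(1)a^-=0\Rightarrow a^+(0)a^+=0$, so $A(0)A=0$, and likewise $B(0)B=0$. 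For (IV), $e^\gamma$ is the highest weight of $\slt=A(1)B$, so $e^\gamma=\lambda\,a^+(1)b^+$ with $\lambda\neq0$ for the highest weight generators $a^+\in A$, $b^+\in B$; skew-symmetry then gives $0\neq\lambda L(-1)e^\gamma=\lambda\,\gamma(-1)e^\gamma=a^+(0)b^+-b^+(0)a^+$, while the same expansion---now using $A(1)B\cong\slt^\gamma$, which forces the norm-$4$ exponential part of $a^+(0)b^+$ to vanish---shows that $a^+(0)b^+$ lies in $\CC\,\gamma(-1)e^\gamma$; hence $0\neq a^+(0)b^+\in T\slt\cap A(0)B$. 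Theorem~\ref{thmUN=4} now yields $V\cong\Nfour$ with $c=6$. The main obstacle is this last paragraph: carrying out the lattice expansions carefully enough that the equivalence $A(1)A=0\Leftrightarrow A(0)A=0$ and the assertion $a^+(0)b^+\in\CC\,\gamma(-1)e^\gamma$ can be extracted directly from the cocycle algebra.
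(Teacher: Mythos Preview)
Your strategy—reduce to Theorem~\ref{thmUN=4} by verifying its hypotheses (III) and (IV), using lattice expansions to show that the linear conditions forced by $A(1)A=0$ (resp.\ $A(1)B\cong\slt$) already imply $A(0)A=0$ (resp.\ $T\slt\cap A(0)B\neq0$)—is exactly the paper's approach (Section~\ref{SRecog2}, Lemmas~\ref{lemA(1)A}--23). Two points need tightening.

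For (III): your description of the two conditions is not quite right. The $\mathfrak{h}$-part of $a^+(1)a^-$ vanishing is a \emph{vector} identity
\[
\sum_{\alpha} c_\alpha c_{h-\alpha}\,\varepsilon(h,\alpha)\,\alpha=0
\]
(paper's Lemma~\ref{lemA(1)A}(b)), not the combinatorial statement ``no $\mu$ with $-(\gamma+\mu)$ also in the support''; the latter is merely a sufficient special case. It is this vector identity, together with the scalar identities indexed by roots $\gamma\perp h$ coming from the $e^\gamma$-part, that kills both pieces of $a^+(0)a^+$.

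For (IV): the vanishing of the norm-$4$ exponential part of $a^+(0)b^+$ only lands you in $\mathfrak{h}(-1)e^\gamma$, not in $\CC\,\gamma(-1)e^\gamma$, and $\mathfrak{h}(-1)e^\gamma\not\subseteq T\slt$. To get the stronger conclusion you must also invoke the vector condition from the $\mathfrak{h}$-part of $a^+(1)b^-$ (paper's Lemma~\ref{lemP}(a), namely $h=-\sum_\alpha c_\alpha d_{h-\alpha}\varepsilon(h,\alpha)\alpha$); this forces the $\mathfrak{h}(-1)e^\gamma$-part of $a^+(0)b^+$ to be \emph{exactly} $\gamma(-1)e^\gamma$, which is nonzero—so your final ``hence $0\neq a^+(0)b^+$'' then does follow, without the skew-symmetry detour.
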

The paper is organized as follows.\ After reviewing some background about the $N{=}4$ algebra in Section \ref{SN=4}, we give the proof of Theorems \ref{thmUN=4} and \ref{thmVLV} in Sections \ref{SRecog1} and \ref{SRecog2} respectively.\
We also briefly describe the more elementary analogous case of the $N{=}2$ superconformal subalgebra.\ This is contained in Section \ref{SN=2}.\
 In Section \ref{SExample} we illustrate how the main Theorems may be applied to some examples.\ In particular,
we give (Proposition \ref{thmalt}) a painless new construction of the $N{=}4$ algebra in a certain rank 6 lattice theory $V_L$ containing no free fermions.\ We include some appendices in Section \ref{SAppendix} containing technical background in SVOA theory
that we assume and use throughout the paper.

\noindent \textbf{Acknowledgements} We thank J.~Duncan, R.~Heluani, M.~Miyamoto and R.~Volpato for their comments and corrections to an earlier version of this paper.
\medskip

\section{$N{=}4$ superconformal algebras}\label{SN=4}
The abstract generators and relations for the $N{=}4$ super conformal algebra $\Nfour$  of central charge $c$ are as follows.\ It is generated by 4 states $G^{\pm}, \overline{G}^{\pm}$ of conformal weight $\frac{3}{2} $.\ The nontrivial relations can be expressed as follows (e.g. \cite{ET,K})
\begin{align*}
&\mbox{(a)}\; && J^{0}(0)J^{\pm}{=}{\pm}2J^{\pm},\quad &
&\mbox{(b)}\; && J^{0}(1)J^{0}{=}\frac{c}{3}\vac, \quad &
&\mbox{(c)}\; && J^{+}(0)J^{-}{=}J^{0},
 \\
&\mbox{(d)}\; && J^{+}(1)J^{-}{=}\frac{c}{6}\vac,\quad &
&\mbox{(e)}\; && J^{0}(0)G^{\pm}{=}{\pm} G^{\pm}\quad & 
&\mbox{(f)}\; && J^{0}(0)\overline{G}^{\pm}={\pm}\overline{G}^{\pm}, 
\\ 
&\mbox{(g)}\; && J^{\pm}(0)G^{\mp}{=}G^{\pm}, \quad &
&\mbox{(h)}\; && J^{\pm }(0)\overline{G}^{\mp}{=}{-}\overline{G}^{\pm}, \quad &
&\mbox{(i)}\; && G^{\pm}(1)\overline{G}^{\pm}{=}2J^{\pm},
\\
&\mbox{(j)}\; && G^{\pm}(1)\overline{G}^{\mp}{=}{\pm}J^{0},\quad &
&\mbox{(k)}\; && G^{\pm}(2)\overline{G}^{\mp}{=}\frac{c}{3}\vac,\quad &
&\mbox{(l)}\; && G^{\pm}(0)\overline{G}^{\pm}{=}TJ^{\pm},
\\
&\mbox{(m)}\quad && G^{\pm}(0)\overline{G}^{\mp}{=}\omega{\pm}\frac{1}{2}TJ^{0}.\notag
\end{align*}
Here, $\omega $ is a Virasoro element of central charge $c$, $T$ is the translation operator and  $J^{\pm},J^{0}$ are weight one vectors.\ Relations (a)--(m) hold in addition to the usual Virasoro relations between $J^{\pm},J^{0}$ and $\omega$.\ The initial segment of the Fock space of $V{=}\Nfour$ is
\begin{align*}
V{=}\CC\mathbf{1}\oplus V_1\oplus V_{\frac{3}{2} }\oplus V_2\oplus\hdots
\end{align*}
where
\begin{align*}
&V_1{=}\langle J^{\pm}, J^0\rangle {\cong} \slt\\
&V_{\frac{3}{2} }{=}\langle G^{\pm}\rangle{\oplus}\langle\overline{G}^{\pm}\rangle,\  \mbox{a pair of vector representations for 
$\slt$}\\
&V_2{=} T\slt{\oplus} \CC\omega .
\end{align*}

\medskip
For arbitrary subsets $X, Y{\subseteq}U$ and any integer $n$, we have already used, and will continue to use, the convenient notation $X(n)Y$
for the linear subspace of $U$ spanned by all products $u(n)v\ (u{\in}X, v{\in}Y)$.\
For the $N{=}4$ algebra $\Nfour$ we set 
\begin{align*}
A{:=}\langle G^{\pm}\rangle,\ B{:=}\langle \overline{G}^{\pm}\rangle.
\end{align*}
Then it follows from the relations (a)--(m) that
\begin{align*}
A(n)A{=}B(n)B{=}0\ (n\geq 0),\quad A(0)B{=}V_2,\quad A(1)B{=}\slt,\quad A(2)B{=}\CC c\mathbf{1}.
\end{align*}

%%%%%%%%%%%%%%%%%%%%%%%%%%%%%%
\section{Proof of Theorem \ref{thmUN=4}}\label{SRecog1}
Let $\omega^{U}$ be the Virasoro element of $U$, with vertex operator
\begin{align*}
Y(\omega^{U}, z)=\sum_{n\in\ZZ} L^{U}(n)z^{-n-2}.
\end{align*}

\subsection{Assumptions (I) and (II)}
In the interests of keeping track of just which of the axioms (I)--(IV) in the statement of
Theorem \ref{thmUN=4} are needed when, we begin by recording  some consequences of axioms (I) and (II) alone.\ 
We set
\begin{align}
&\slt{:=}\langle h, x^{\pm}\rangle\subseteq V_1 \label{slnames}\\
&A{:=}\langle \tau^+, \tau^-\rangle,\quad B{:=}\langle \overline{\tau}^+, \overline{\tau}^-\rangle. \label{ABnames}
\end{align}
Here, 
\begin{align}h(0)x^{\pm}{=}\pm2 x^{\pm},~ x^+(0)x^-{=}h\label{sl2relation}
\end{align} are standard generators and relations for $\slt$,
and $\tau^{\pm}, \overline{\tau}^{\pm}$ are weight vectors for $h(0)$ with weights $\pm 1$, with
\begin{align}
&x^{\pm}(0)\tau^{\mp}{=}\tau^{\pm},\quad 
x^{\pm}(0)\overline{\tau}^{\mp}{=}\overline{\tau}^{\pm},
\label{xpmtau}
\\
&x^{\pm}(0)\tau^{\pm}=
x^{\pm}(0)\overline{\tau}^{\pm}=0.
\label{xpmtau0}
\end{align}

All of this is just a choice of notation based on the hypotheses of the Theorem \ref{thmUN=4}.\ It amounts to the existence of an
isomorphism of $\slt$-modules
\begin{align*}
\varphi{:}A{\rightarrow}B, \ \ \tau^{\pm}{\mapsto} \overline{\tau}^{\pm}.
\end{align*}
Note that some latitude in scaling the generators of $A$ and $B$ remains - a fact that we make use of later.\
In any case, there is a canonical $\slt$-invariant decomposition into   trivial and adjoint modules
\begin{align*}
A{\otimes}B{=}\Lambda {\oplus}\Sigma,
\end{align*}
where
\begin{align}
&\Lambda{=}\CC(\tau^+{\otimes}\overline{\tau}^-{-}\tau^{-}{\otimes}\overline{\tau}^+) \label{Lambdadef},\\
&\Sigma{=}\langle \tau^{+}{\otimes}\overline{\tau}^{+}, \tau^{-}{\otimes}\overline{\tau}^{-}, 
\tau^+{\otimes}\overline{\tau}^-{+}\tau^-\otimes\overline{\tau}^+\rangle. \label{Sigmadef}
\end{align}
Of course, there is a similar decomposition of $A{\otimes}A{=}\Lambda^2(A){\oplus}S^2(A)$.
\medskip

Note that $A(1)B$ and $A{\oplus}B$ consist of primary states.\ This is clear for $A{\oplus}B$, while
for $A(1)B$ we note that $L^U(1){:}V_1{\rightarrow}\CC\mathbf{1}$ is a morphism of $\slt$-modules.\
Restriction to $A(1)B{=}\slt$ must therefore be trivial, and the assertion follows.\ Thus we have
\begin{align}
[L^{U}(m),u(n)] &{=} {-}nu(m{+}n),
\label{Lun}
\\
[L^{U}(m),v(n)] &{=} \left(\frac{1}{2}(m{+}1){-}n\right)v(m{+}n),
\label{eq:Lvn}
\end{align}
for all for $u\in A(1)B$ and  $v{\in}A{\oplus}B$.
 In particular, \eqref{eq:Lvn} implies
\begin{lem}\label{lem:AnpA}
$A(n{+}1)A{=}L^{U}(1)A(n)A$ for all $n\neq 1$ and $A(n{+}2)A{=}L^{U}(2)A(n)A$ for all $n$. Similar relations hold for $B(n{+}1)B$, $B(n{+}2)B$, $A(n{+}1)B$ and $A(n{+}2)B$.
$\hfill \Box$
\end{lem}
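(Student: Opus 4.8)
The plan is to derive both identities directly from the commutator relation \eqref{eq:Lvn} together with the already-noted fact that the elements of $A\oplus B$ are primary of weight $\frac{3}{2}$, so that $L^{U}(m)w=0$ for every $w\in A\oplus B$ and every $m\geq 1$.

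First I would fix an integer $n$ and vectors $u,v\in A$ and expand $L^{U}(1)\bigl(u(n)v\bigr)=[L^{U}(1),u(n)]\,v+u(n)\,L^{U}(1)v$. The second summand vanishes because $v$ is primary, and applying \eqref{eq:Lvn} to $u$ (viewed as an element of $A\oplus B$) with $m=1$ gives $[L^{U}(1),u(n)]=(1-n)\,u(n{+}1)$. Hence $L^{U}(1)\bigl(u(n)v\bigr)=(1-n)\,u(n{+}1)v$. When $n\neq 1$ the scalar $1-n$ is invertible, so $u(n{+}1)v$ and $L^{U}(1)\bigl(u(n)v\bigr)$ span the same line; letting $u,v$ range over $A$ and using linearity of $L^{U}(1)$ then yields $A(n{+}1)A=L^{U}(1)A(n)A$ for all $n\neq 1$.

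The second identity is obtained the same way with $m=2$: since $v$ is primary, $L^{U}(2)\bigl(u(n)v\bigr)=[L^{U}(2),u(n)]\,v=\bigl(\tfrac{3}{2}-n\bigr)\,u(n{+}2)v$ by \eqref{eq:Lvn}. The key point is that $\tfrac{3}{2}-n$ never vanishes for $n\in\ZZ$, so no value of $n$ has to be excluded, and $A(n{+}2)A=L^{U}(2)A(n)A$ for all $n$. Finally, these computations use only that $u\in A\oplus B$ and that $v$ is primary, so they go through verbatim with $v$ replaced by an element of $B$, or with $u$ and $v$ both in $B$; this gives the stated analogues for $B(n{+}1)B$, $B(n{+}2)B$, $A(n{+}1)B$ and $A(n{+}2)B$.

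There is no serious obstacle here --- the lemma is a direct consequence of \eqref{eq:Lvn} --- but one should be careful to record both inclusions (using linearity of $L^{U}(1)$ and invertibility of the scalar) and to note why $n=1$ must be omitted in the $L^{U}(1)$ statement while nothing need be omitted in the $L^{U}(2)$ statement, the latter being exactly the arithmetical fact that $\tfrac{3}{2}$ is not an integer.
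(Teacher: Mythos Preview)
Your argument is correct and is exactly the computation the paper has in mind: the lemma is stated immediately after \eqref{eq:Lvn} with no written proof beyond the remark that it follows from that formula, and you have supplied precisely those details, including the correct observation that the exclusion $n\neq 1$ arises from the vanishing of the coefficient $1-n$ while $\tfrac{3}{2}-n$ never vanishes for integer $n$.
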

\medskip

For $u,v\in A(1)B{=}\slt$ we have from commutativity that 
\begin{align}
\label{KM}
[u(m),v(n)]{=}(u(0)v)(m+n)+m\dlangle    u,v \drangle  \delta_{m+n,0},
\end{align}
with $\dlangle  u,v \drangle $ given by $u(1)v{=}\dlangle  u,v \drangle \vac$. Define $k{\in}\CC$ by 
\begin{align}
\dlangle  h,h \drangle {=}2k.
\label{eq:kdef}
\end{align}

\begin{lem}\label{lemKM}
$V$ is a module for the Kac-Moody  algebra $ \slthat$ of level $k$.  
\end{lem}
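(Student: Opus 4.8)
The plan is to exhibit the required module structure explicitly: send $a{\otimes}t^{n}\mapsto a(n)|_{V}$ for $a\in\slt{=}A(1)B$ and $n\in\ZZ$, and send the canonical central element $K$ to $k\cdot\Id_{V}$. Since $\slt\subseteq V_{1}$ and $V$ is a subalgebra, each $a(n)$ preserves $V$, and $a(n)v{=}0$ once $n>\mathrm{wt}(v)$ because $a(n)$ lowers conformal weight by $n$ and the weights of $V$ are bounded below; so the action is locally truncated, as a restricted $\slthat$-module requires. In view of \eqref{KM}, the only substantive point is to identify the symmetric bilinear form $\dlangle\cdot,\cdot\drangle$ on $\slt$, defined by $u(1)v{=}\dlangle u,v\drangle\vac$, with $k$ times the normalized invariant form $\langle\cdot,\cdot\rangle_{0}$ (the one with $\langle h,h\rangle_{0}{=}2$, $\langle x^{+},x^{-}\rangle_{0}{=}1$).

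First I would record the standard facts underlying \eqref{KM}: since $U$ is of CFT--type and $u,v\in\slt\subseteq U_{1}$, one has $u(n)v{=}0$ for $n\ge2$, $u(1)v\in U_{0}{=}\CC\vac$, $u(0)v\in U_{1}$, and the creation axiom gives $u(n)\vac{=}0$ for $n\ge0$. Skew-symmetry of vertex operators then yields $u(1)v{=}v(1)u$ (all higher correction terms vanish on weight-one vectors), so $\dlangle\cdot,\cdot\drangle$ is symmetric. For invariance I would apply the commutator formula in the form $(u(0)v)(1)w{=}[u(0),v(1)]w$ to a weight-one vector $w$: here $u(0)\!\left(v(1)w\right){=}\dlangle v,w\drangle\,u(0)\vac{=}0$, so $(u(0)v)(1)w{=}-v(1)\!\left(u(0)w\right){=}-\dlangle v,u(0)w\drangle\vac$, i.e.\ $\dlangle u(0)v,w\drangle{=}-\dlangle v,u(0)w\drangle$. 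Since $u(0)v$ is the Lie bracket of $\slt$, this is precisely invariance of $\dlangle\cdot,\cdot\drangle$.

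Because $\slt$ is simple, the space of symmetric invariant bilinear forms on it is one-dimensional, spanned by $\langle\cdot,\cdot\rangle_{0}$; hence $\dlangle\cdot,\cdot\drangle{=}\lambda\langle\cdot,\cdot\rangle_{0}$, and evaluating on $(h,h)$ with \eqref{eq:kdef} forces $\lambda{=}k$. Substituting back into \eqref{KM} gives $[u(m),v(n)]{=}(u(0)v)(m{+}n)+m\,k\langle u,v\rangle_{0}\,\delta_{m+n,0}\Id$, which is exactly the defining relation of $\slthat$ at level $k$ under the assignment above; together with local truncation this makes $V$ an $\slthat$-module of level $k$.

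I do not anticipate a serious obstacle. The only places needing care are getting the signs right in the invariance computation (and remembering $u(0)\vac{=}0$), and the convention-matching at the end: "level $k$" refers to the normalization $\langle h,h\rangle_{0}{=}2$ on the long coroot, which is precisely why \eqref{eq:kdef} was written with the factor $2$. If one preferred, the invariance of $\dlangle\cdot,\cdot\drangle$ could instead be deduced from the general theory of invariant bilinear forms on a VOA, but the direct two-line commutator argument is cleaner here.
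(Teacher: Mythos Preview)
Your argument is correct and follows the same overall route as the paper: identify the form $\dlangle\cdot,\cdot\drangle$ on $\slt$ with $k$ times the normalized Killing form and then invoke \eqref{KM}. The only real difference is in how the identification is made. The paper computes entries directly on the Chevalley basis---using an $h(0)$-weight argument to get $\dlangle h,x^{\pm}\drangle{=}0$, and the commutator formula applied to $h(1)(x^{+}(0)x^{-})$ to get $\dlangle x^{+},x^{-}\drangle{=}k$---and then observes these values force $\dlangle u,v\drangle{=}k(u,v)$. You instead prove once and for all that $\dlangle\cdot,\cdot\drangle$ is symmetric and $\slt$-invariant, and then appeal to the one-dimensionality of invariant symmetric bilinear forms on a simple Lie algebra. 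Your approach has the minor advantage of handling $k{=}0$ uniformly (the paper treats that case with a separate remark) and of spelling out the module map and local truncation, which the paper leaves implicit; the paper's approach has the advantage of being entirely self-contained and avoiding the appeal to the classification of invariant forms.
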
 
\begin{proof} 
Since $h(1)x^{\pm}\in V_0=\CC \vac$ we find
\begin{align*}
0=h(0)(h(1)x^{\pm})=h(1)h(0)x^{\pm}=\pm 2 h(1)x^{\pm},
\end{align*}
so that $ \dlangle  h,x^{\pm} \drangle =0$.
Using $x^{+}(0)x^{-}{=}h$ this implies  
\begin{align}
2k\vac {=}h(1)(x^{+}(0)x^{-}){=}2x^{+}(1)x^{-},
\label{xp1xm}
\end{align}
by commutativity. 
 Hence, for $k\neq 0$, $\dlangle ~,~\drangle $ is a nondegenerate, invariant, bilinear form on ${\slt}$, so that for all $u,v\in \slt$
\begin{align}
\label{invform}
\dlangle  u,v \drangle {=}k(u,v ),
\end{align}
where here $(~,~)$ is 
the standard invariant bilinear form normalized to $(h,h)=2$. \eqref{invform} also holds for $k=0$. Hence, since $u(0)v=-v(0)u$ defines a commutator bracket on $ \slt$,  \eqref{KM} implies the desired result.
\end{proof}

\medskip
\begin{lem} Properties (a)--(h) above hold for $c=6k$.
\end{lem}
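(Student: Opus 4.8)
The plan is to read off (a)--(h) from the data already assembled from hypotheses (I) and (II), after committing to an explicit choice of generators. Put $J^0 := h$, $J^\pm := x^\pm$, $G^\pm := \tau^\pm$, $\overline{G}^+ := \overline{\tau}^+$, $\overline{G}^- := -\overline{\tau}^-$, and \emph{define} $c := 6k$ with $k$ as in \eqref{eq:kdef}. With these choices (a) and (c) are literally the $\slt$ relations \eqref{sl2relation}; (g) is the first relation of \eqref{xpmtau}; and (e) is just the weight-$\pm 1$ condition on $\tau^\pm$ imposed in the set-up following \eqref{ABnames}. Since the proof of this lemma invokes neither (III) nor (IV), this is the expected scope.

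The one place where a choice must be made carefully is the sign asymmetry between (g) and (h) in the standard presentation of $\Nfour$: in our set-up the $\slt$-module isomorphism $\varphi : A \to B$, $\tau^\pm \mapsto \overline{\tau}^\pm$, transports the action \eqref{xpmtau} on $A$ to the identically-signed action on $B$, whereas relation (h) carries an extra sign relative to (g). This is exactly repaired by the single sign flip $\overline{G}^- := -\overline{\tau}^-$ (keeping $\overline{G}^+ := \overline{\tau}^+$): then $x^+(0)\overline{G}^- = -x^+(0)\overline{\tau}^- = -\overline{\tau}^+ = -\overline{G}^+$ and $x^-(0)\overline{G}^+ = \overline{\tau}^- = -\overline{G}^-$, which is (h); and (f) holds since $h(0)\overline{G}^- = -h(0)\overline{\tau}^- = \overline{\tau}^- = -\overline{G}^-$. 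One checks that $B = \langle \overline{G}^\pm\rangle$ is still a vector representation for $\slt$, and that (a)--(h) are untouched by the overall rescalings of $A$ and of $B$ that remain free; only the internal sign just fixed is relevant, so the remaining scaling freedom can be deferred to the analysis of relations (i)--(m).

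It then remains to verify (b) and (d) and to check that one value of $c$ serves both. For (b), the definition of $\dlangle\,,\drangle$ together with \eqref{eq:kdef} gives $J^0(1)J^0 = h(1)h = \dlangle h,h\drangle\vac = 2k\vac$, and this is $\frac{c}{3}\vac$ precisely when $c = 6k$ --- which is how $c$ was defined. For (d), equation \eqref{xp1xm} reads $2x^+(1)x^- = 2k\vac$, so $J^+(1)J^- = x^+(1)x^- = k\vac = \frac{c}{6}\vac$, with the same $c$. That (b) and (d) are compatible is no accident: by Lemma \ref{lemKM} the form $\dlangle\,,\drangle$ is $k$ times the standard invariant form normalized by $(h,h) = 2$, for which $(x^+,x^-) = 1$, so $\dlangle h,h\drangle = 2\dlangle x^+,x^-\drangle$, matching $\frac{c}{3} = 2\cdot\frac{c}{6}$. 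There is thus no real obstacle in this lemma beyond bookkeeping and the discrete normalization choice above; its only genuine input, Lemma \ref{lemKM}, is already in hand.
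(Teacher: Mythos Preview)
Your argument is correct and follows essentially the same route as the paper: fix the dictionary $J^0=h$, $J^\pm=x^\pm$, $G^\pm=\tau^\pm$, set $c=6k$, and read off (a)--(h) from \eqref{sl2relation}, \eqref{xpmtau}, \eqref{eq:kdef} and \eqref{xp1xm}. The only cosmetic difference is your sign convention $\overline{G}^+:=\overline{\tau}^+$, $\overline{G}^-:=-\overline{\tau}^-$, whereas the paper takes $\overline{G}^\pm:=\mp\overline{\tau}^\pm$; these differ by an overall sign on $B$, which is immaterial for (a)--(h) since (f) and (h) are linear in $\overline{G}$. Be aware, though, that your choice will force you to absorb that global sign later when you reach (i)--(m) (e.g.\ with your convention $G^+(1)\overline{G}^+=\tau^+(1)\overline{\tau}^+=-2x^+$ rather than $+2J^+$), so the paper's convention is marginally cleaner downstream.
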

\begin{proof}
We set 
\begin{align}J^0{:=}h,~J^{\pm}:=x^{\pm}, ~G^{\pm}:=\tau^{\pm},\text{ and }\overline{G}^{\pm}:=\mp \overline{\tau}^{\pm}.
\end{align} 
By (\ref{sl2relation})--(\ref{xpmtau0}), (\ref{eq:kdef})--(\ref{xp1xm}), we have 
\begin{align*}
&\quad J^0(0)J^{\pm}=\pm 2J^{\pm}, \quad J^0(1)J^0=2k\vac,\\
& \quad J^+(0)J^-=J^0, \quad J^+(1)J^-=k\vac,\\
& \quad J^0(0)G^{\pm}=\pm G^{\pm},\quad J^0(0)\overline{G}^{\pm}=\pm\overline{G}^{\pm},\\
& \quad J^{\pm}(0)G^{\mp}=G^{\pm}, \quad J^{\pm}(0)\overline{G}^{\mp}=-\overline{G}^{\pm}.
\end{align*}
These are the needed relations. 
\end{proof}
\begin{rmk} In addition, we have 
$J^{\pm}(0)G^{\pm}=J^{\pm}(0)\overline{G}^{\pm}=0.$
\end{rmk}
\medskip
Next, we will obtain properties (i) and (j):
\begin{lem} We can choose the normalization for $\tau^{\pm},  \overline{\tau}^{\pm}$ so that
\begin{align}\label{hdef}
h{=}\tau^{\pm}(1)\overline{\tau}^{\mp},\quad 
\tau^{\pm}(1)\overline{\tau}^{\pm}{=}\mp 2x^{\pm}.
\end{align} In particular, we have $G^{\pm}(1)\overline{G}^{\pm}=2J^{\pm}$, and $G^{\pm}(1)\overline{G}^{\mp}=\pm J^0$.
\end{lem}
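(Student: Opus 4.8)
The plan is to compute the four products $\tau^{a}(1)\overline\tau^{b}$ ($a,b\in\{+,-\}$) from the $\slt$--module structure alone, and only afterwards invoke the scaling latitude noted after \eqref{ABnames} to fix the normalization. The central observation is that the bilinear map $\mu\colon A\otimes B\to V_1$, $a\otimes b\mapsto a(1)b$, is a morphism of $\slt$--modules: this is just the derivation property $w(0)(a(1)b)=(w(0)a)(1)b+a(1)(w(0)b)$ of the zero--modes $w(0)$, $w\in V_1$, which the paper already uses. By hypothesis (II) the image of $\mu$ is $A(1)B=\slt$, the adjoint module; since $\Lambda$ in \eqref{Lambdadef} is trivial and $\Sigma$ in \eqref{Sigmadef} is adjoint, Schur's lemma together with $\dim\Sigma=\dim\slt=3$ forces $\mu(\Lambda)=0$ and $\mu|_\Sigma\colon\Sigma\xrightarrow{\ \sim\ }\slt$.

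First I would read off $\mu(\Lambda)=0$, which says $\tau^{+}(1)\overline\tau^{-}=\tau^{-}(1)\overline\tau^{+}$; since this element has $h(0)$--weight $0$ and lies in $\slt$, it equals $\gamma h$ for some scalar $\gamma$. Applying $\mu$ to the weight--$0$ generator $\tau^{+}\otimes\overline\tau^{-}+\tau^{-}\otimes\overline\tau^{+}$ of $\Sigma$ yields $2\gamma h$, which is nonzero because $\mu|_\Sigma$ is injective; hence $\gamma\neq0$. Next, $\tau^{\pm}(1)\overline\tau^{\pm}$ has $h(0)$--weight $\pm2$, so it equals $\lambda_{\pm}x^{\pm}$ for scalars $\lambda_{\pm}$. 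Applying $x^{\mp}(0)$ to $\tau^{\pm}(1)\overline\tau^{\pm}$ and using the derivation property together with \eqref{xpmtau}, \eqref{xpmtau0} and the bracket relation $x^{\mp}(0)x^{\pm}=\mp h$ on $\slt$ (i.e. \eqref{sl2relation} and $u(0)v=-v(0)u$, as in the proof of Lemma \ref{lemKM}), one side becomes $\mp\lambda_{\pm}h$ and the other becomes $\tau^{\pm}(1)\overline\tau^{\mp}+\tau^{\mp}(1)\overline\tau^{\pm}=2\gamma h$; hence $\lambda_{\pm}=\mp2\gamma$. This gives $\tau^{\pm}(1)\overline\tau^{\mp}=\gamma h$ and $\tau^{\pm}(1)\overline\tau^{\pm}=\mp2\gamma x^{\pm}$.

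Finally I would rescale: replacing $\overline\tau^{\pm}$ by $\gamma^{-1}\overline\tau^{\pm}$ is legitimate since $\gamma\neq0$, and it does not disturb \eqref{xpmtau}, \eqref{xpmtau0} or the normalizations of $B$ from the earlier lemmas, all of which are homogeneous in $\overline\tau^{\pm}$. After this rescaling $\gamma=1$, which is precisely \eqref{hdef}. The ``in particular'' assertions then follow by substituting $G^{\pm}=\tau^{\pm}$ and $\overline G^{\pm}=\mp\overline\tau^{\pm}$ into \eqref{hdef} and simplifying.

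I do not expect a genuine obstacle here: the argument is bookkeeping with the $\slt$--action. The one thing demanding care is the sign tracking — the signs in $x^{\mp}(0)x^{\pm}=\mp h$, the $h(0)$--weights of the generators, and the convention $\overline G^{\pm}=\mp\overline\tau^{\pm}$ — together with a quick check that the final rescaling is compatible with everything fixed previously. It is worth noting that this step uses neither skew--symmetry of the weight--$\frac32$ fields nor hypothesis (III).
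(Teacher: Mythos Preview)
Your argument is correct and follows essentially the same route as the paper's own proof: both exploit that $a\otimes b\mapsto a(1)b$ is an $\slt$--morphism, deduce $\mu(\Lambda)=0$ and hence $\tau^{+}(1)\overline\tau^{-}=\tau^{-}(1)\overline\tau^{+}$ is a nonzero multiple of $h$, rescale, and then compute $\tau^{\pm}(1)\overline\tau^{\pm}$ via the derivation property of a zero--mode $x^{\pm}(0)$ (the paper pushes weight $0\to\pm2$ with $x^{\pm}(0)$ whereas you pull $\pm2\to0$ with $x^{\mp}(0)$, which is an immaterial difference). Your explicit appeal to Schur's lemma and your choice to rescale only the $\overline\tau^{\pm}$ at the end are cosmetic variations; the paper does the rescaling midway and phrases the Schur step more informally, but the content is the same.
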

\begin{proof} Thanks to the $\slt$-morphism
$u{\otimes}v{\mapsto} u(1)v$ from $A{\otimes}B$ on to $A(1)B$, we get generators of $A(1)B$ just by replacing
$u{\otimes}v$ by $u(1)v$ in the generators for $\Sigma$ in (\ref{Sigmadef}).
Thus 
\begin{align*}
&A(1)B{=}\slt{=}\langle \tau^{+}(1)\overline{\tau}^{+}, \tau^{-}(1)\overline{\tau}^{-}, 
\tau^+(1)\overline{\tau}^-{+}\tau^-(1)\overline{\tau}^+\rangle.
\end{align*}
 Similarly, the generator for
$\Lambda$ in \eqref{Lambdadef} maps to $0$, leading to $\tau^+(1)\overline{\tau}^-{-}\tau^{-}(1)\overline{\tau}^+{=}0$.

\medskip
$\tau^+(1)\overline{\tau}^-{+}\tau^-(1)\overline{\tau}^+=2\tau^{\pm}(1)\overline{\tau}^{\mp}$ is a nonzero semisimple element of
$\slt$.\ As such, it is a nonzero multiple of $h$.\ Since we are free to simultaneously scale the $\tau$'s by a nonzero constant,  we may choose
the scale so that $h{=}\tau^{+}(1)\overline{\tau}^{-}=\tau^{-}(1)\overline{\tau}^{+}$.
Since $x^{\pm}(0)h=-h(0)x^{\pm}=\mp 2x^{\pm}$ we find, 
using super commutativity, that
\begin{align*}
- 2 x^{+}&= x^{+}(0) \left(\tau^{+}(1)\overline{\tau}^{-}\right)
\\
&=  \left(x^{+}(0)\tau^{+}\right)(1)\overline{\tau}^{-}+\tau^{+}(1)\left(x^{+}(0) \overline{\tau}^{-}\right)
\\
&= 0+\tau^{+}(1)  \overline{\tau}^{+}.
\end{align*}
Similarly, $\tau^{-}(1)  \overline{\tau}^{-} = 2 x^{-}$.
\end{proof}

\subsection{Assumption (III)}
Lemma~\ref{lem:AnpA} implies 
\begin{align}
&A(1)A{=}A(2)A{=}B(1)B{=}B(2)B{=}0.
\label{a(1)ab(1)b}
\end{align} 
Thus super commutativity implies
\begin{lem}
$[v(m),w(n)]=0$ for all $v,w\in A$ for all $v,w\in B$.
\end{lem}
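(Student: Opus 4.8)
The plan is to read the statement off the super-commutator formula, once we know that the products $A(n)A$ (and $B(n)B$) vanish for \emph{every} $n\ge 0$. So the first step is to promote \eqref{a(1)ab(1)b} to
\begin{align*}
A(n)A=B(n)B=0\qquad\text{for all }n\ge 0 .
\end{align*}
Axiom~(III) supplies the case $n=0$, and \eqref{a(1)ab(1)b} supplies $n=1,2$; for $n\ge 3$ I would write $n=(n-2)+2$ and invoke the relation $A(n)A=L^{U}(2)A(n-2)A$ from Lemma~\ref{lem:AnpA}, so that an induction on $n$ closes the argument, and the same runs verbatim for $B$.

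Next I would invoke the super-commutator (Borcherds) formula recorded in the appendix. The generators of $A$ and of $B$ are odd vectors of weight $\frac{3}{2}$, so every mode $v(m)$ with $v\in A\oplus B$ is an odd operator; hence for $v,w\in A$ the bracket in the statement is the anticommutator, and
\begin{align*}
v(m)w(n)+w(n)v(m)=\sum_{j\ge 0}\binom{m}{j}\bigl(v(j)w\bigr)(m+n-j).
\end{align*}
Every summand on the right involves $v(j)w\in A(j)A=0$, so the whole sum is zero; replacing $A$ by $B$ throughout gives $[v(m),w(n)]=0$ for $v,w\in B$ as well, which is exactly the assertion.

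I do not expect a genuine obstacle: the content lies entirely in the vanishing of the products $A(n)A$ and $B(n)B$, which was already arranged by axiom~(III) together with Lemma~\ref{lem:AnpA}. The only point deserving a word of care is that the super-commutator formula draws on $v(j)w$ for \emph{all} $j\ge 0$, which is precisely why one first extends \eqref{a(1)ab(1)b} from $n\le 2$ to all $n\ge 0$ rather than quoting it as stated.
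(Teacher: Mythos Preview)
Your proof is correct and follows the same path as the paper's: both arguments reduce to the super commutator formula once one knows $A(n)A=B(n)B=0$ for every $n\ge 0$. The only difference is how the cases $n\ge 3$ are handled. You run an induction through Lemma~\ref{lem:AnpA}, which is fine; the paper simply says ``super commutativity implies'' and leaves the $n\ge 3$ vanishing implicit, relying on weight reasons: since $A\subseteq U_{3/2}$, any product $v(n)w$ with $v,w\in A$ lies in $U_{2-n}$, and for $n\ge 3$ this space is zero because $U$ is of CFT--type. So your detour through Lemma~\ref{lem:AnpA} works but is not needed---the grading already kills those terms.
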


\begin{lem}
\label{lem:u1v} 
$u(1)v{=}0$ for all $u{\in}\slt$ and $v{\in} A{\oplus}B$.\
\end{lem}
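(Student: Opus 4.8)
The plan is to prove the two statements $\slt(1)A{=}0$ and $\slt(1)B{=}0$ separately; since $\slt{=}\langle h,x^\pm\rangle$ and $A{\oplus}B{=}\langle\tau^\pm,\overline\tau^\pm\rangle$, together these are the assertion. The organizing principle is that $\psi\colon\slt{\otimes}(A{\oplus}B)\to V$, $u{\otimes}v\mapsto u(1)v$, is a homomorphism of $\slt$-modules (because $[w(0),u(1)]{=}(w(0)u)(1)$ for $w{\in}\slt$), with image inside $V_{\frac12}$ by the weight count $1{+}\frac32{-}1{-}1{=}\frac12$. Now $\slt{\otimes}A$ is a direct sum of an irreducible $4$-dimensional and an irreducible $2$-dimensional $\slt$-module, each generated over $\slt$ by its highest weight vector; hence $\psi$ vanishes on $\slt{\otimes}A$ as soon as it kills the two highest weight vectors, which are $x^+{\otimes}\tau^+$ (weight $3$, annihilated by $x^+(0)$ by the $\slt$-relations and \eqref{xpmtau0}) and $h{\otimes}\tau^+{+}2x^+{\otimes}\tau^-$ (the unique weight-$1$ combination killed by $x^+(0)$, using \eqref{sl2relation}, \eqref{xpmtau}, \eqref{xpmtau0}). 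The same reduction applies to $B$.

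For the weight-$3$ generator I would use \eqref{hdef} to write $x^+{=}{-}\frac12\,\tau^+(1)\overline\tau^+$ and expand $(\tau^+(1)\overline\tau^+)(1)\tau^+$ by the Borcherds (iterate) identity. Of the four terms produced, two vanish because $A(0)A{=}A(1)A{=}0$ by (III) and \eqref{a(1)ab(1)b}, and one vanishes because $\overline\tau^+(2)\tau^+$ has weight $0$, hence is a multiple of $\mathbf 1$ and is killed by $\tau^+(0)$. The surviving term is $\tau^+(1)\big(\overline\tau^+(1)\tau^+\big)$, and super skew-symmetry — using also that $\tau^+(2)\overline\tau^+{\in}\CC\mathbf 1$ is annihilated by $T$ — gives $\overline\tau^+(1)\tau^+{=}{-}\tau^+(1)\overline\tau^+{=}2x^+$, while a second application (valid since $x^+(2)\tau^+{=}0$ by weight) gives $\tau^+(1)x^+{=}x^+(1)\tau^+$. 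Thus $(\tau^+(1)\overline\tau^+)(1)\tau^+{=}2x^+(1)\tau^+$, forcing $x^+(1)\tau^+{=}{-}x^+(1)\tau^+{=}0$. Running the identical argument with $x^+{=}\frac12\,\overline\tau^+(1)\tau^+$ and the roles of $A$ and $B$ exchanged (now invoking $B(0)B{=}B(1)B{=}0$) yields $x^+(1)\overline\tau^+{=}0$.

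For the weight-$1$ generator I would first observe, by the same kind of expansion starting from $h{=}\tau^+(1)\overline\tau^-$ (with the three side terms vanishing exactly as above), that the surviving term of $(\tau^+(1)\overline\tau^-)(1)\tau^+$ is $\tau^+(1)\big(\overline\tau^-(1)\tau^+\big){=}\tau^+(1)({-}h){=}{-}h(1)\tau^+$, so $h(1)\tau^+{=}{-}h(1)\tau^+{=}0$. Then, using $\tau^-{=}x^-(0)\tau^+$ from \eqref{xpmtau} together with the affine bracket $[x^+(1),x^-(0)]{=}h(1)$ — which follows from \eqref{KM}, the central term carrying the factor $\delta_{1,0}{=}0$ — and $x^+(1)\tau^+{=}0$, we get $x^+(1)\tau^-{=}h(1)\tau^+{=}0$. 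Hence $\psi$ kills $h{\otimes}\tau^+{+}2x^+{\otimes}\tau^-$ and $\slt(1)A{=}0$. For $B$ the first step, started from $h{=}{-}\overline\tau^-(1)\tau^+$, only delivers $h(1)\overline\tau^+{=}2x^+(1)\overline\tau^-$; but the second step ($\overline\tau^-{=}x^-(0)\overline\tau^+$, the same affine bracket, and $x^+(1)\overline\tau^+{=}0$) gives $x^+(1)\overline\tau^-{=}h(1)\overline\tau^+$, and the two relations together force $h(1)\overline\tau^+{=}x^+(1)\overline\tau^-{=}0$. Therefore $\slt(1)B{=}0$ as well, which completes the proof.

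The main obstacle is this last point. Applied blindly, the iterate identity on $u(1)v$ with $u{\in}\slt$ rewritten as a $(1)$-product tends to regenerate $u(1)v$ on the right-hand side, so one must (i) choose the product expressions for $h$ and $x^+$ so that this self-referential term comes with a useful coefficient (here $2$ rather than $1$), and (ii) where it still fails to close the loop — the weight-$1$ vector in the $B$-case — bring in the affine $\slt$-commutator as a second, independent relation. A secondary but real pitfall is the bookkeeping of super-signs in skew-symmetry, since it is precisely the asymmetry $\overline\tau^+(1)\tau^+{=}{+}2x^+$ versus $\tau^+(1)\overline\tau^+{=}{-}2x^+$ (and likewise $\overline\tau^-(1)\tau^+{=}{-}h$) that produces the cancellations.
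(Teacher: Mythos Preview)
Your argument is correct, and its computational core---rewriting $x^+$ and $h$ as $(1)$-products via \eqref{hdef}, expanding by superassociativity, and eliminating side terms using (III), \eqref{a(1)ab(1)b}, super skew-symmetry and the vanishing of negative-weight spaces---is exactly what the paper does. The organization differs: you invoke the $\slt$-module decomposition $\slt\otimes A\cong \mathbf{4}\oplus\mathbf{2}$ to reduce to two highest weight vectors, whereas the paper first proves $x^+(1)\tau^+=0$, then applies $(x^-(0))^k$ for $k=1,2,3$ to generate relations among the six products $u(1)\tau^\pm$, and finally runs a second iterate argument to obtain $h(1)\tau^\pm=0$. Your framing is a bit cleaner conceptually; the paper's is more hands-on but equivalent.

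One small inefficiency in your $B$-case: had you expressed $h$ as $-\overline\tau^+(1)\tau^-$ rather than $-\overline\tau^-(1)\tau^+$, the iterate applied to $\overline\tau^+$ would have produced $\tau^-(1)\overline\tau^+=h$ in the surviving term and given $h(1)\overline\tau^+=-h(1)\overline\tau^+=0$ immediately, exactly parallel to your $A$-computation. Your detour through the affine bracket $[x^+(1),x^-(0)]=h(1)$ to obtain the second independent relation is perfectly valid, just unnecessary with that choice.
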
 
\begin{proof} The vector space $\slt(1)A$ is spanned by the 6 vectors of the form $u(1)\tau^{\pm}$ for $u=x^{\pm}$ or $h$.
Recall that $ \tau^{+}(1)\overline{\tau}^{+}=-2x^{+}$ from \eqref{hdef}. Hence, by super associativity
\begin{align*}
x^{+}(1)\tau^{+} &=-\frac{1}{2}
 \sum_{i\geq 0}(-1)^i 
\binom{1}{i} \left(
\tau^{+}(1{-}i)\overline{\tau}^{+}(1{+}i){-}
\overline{\tau}^{+}(2{-}i){\tau}^{+}(i)
\right)\tau^{+}\\
&=-\frac{1}{2}\tau^{+}(1)\overline{\tau}^{+}(1)\tau^{+}+0=-\tau^{+}(1)x^{+},
\end{align*}
using \eqref{a(1)ab(1)b}, $\overline{\tau}^{+}(2)\tau^{+}\in\CC\vac$ and that $
\overline{\tau}^{+}(1)\tau^{+}=-\tau^{+}(1)\overline{\tau}^{+}=2x^{+}$  by super skew symmetry.
But, by  super skew symmetry again we directly have  $x^{+}(1)\tau^{+}=+\tau^{+}(1)x^{+}$. Thus $x^{+}(1)\tau^{+}=0$. Furthermore, $(x^{-}(0))^kx^{+}(1)\tau^{+}=0$  for $k=1,2,3$ results in the identities
\begin{align*}
h(1)\tau^{+}-x^{+}(1)\tau^{-}=x^{-}(1)\tau^{+}+h(1)\tau^{-}=x^{-}(1)\tau^{-}=0.
\end{align*}
 We can then repeat a similar argument based on super associativity and skewsymmetry using $h=\tau^{-}(1)\overline{\tau}^{+}$ to show that $h(1)\tau^{\pm}=0$. Thus $\slt(1)A=0$. By an identical argument we also find $\slt(1)B=0$.
\end{proof}

We next establish property (k):
\begin{lem} 
\label{lem:tau2tau} We have
\begin{align*}
\tau^{\pm}(2)  \overline{\tau}^{\pm}=0,\ \ \tau^{\pm}(2)  \overline{\tau}^{\mp}=\pm 2k\vac.
\end{align*}
Consequently, for $c=6k$ we have $$G^{\pm}(2)\overline{G}^{\pm}=0,\text{ and }G^{\pm}(2)\overline{G}^{\mp}{=}\frac{c}{3}\vac.$$

\end{lem}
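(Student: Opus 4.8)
The plan is to combine the conformal grading of $V$ with the explicit formulas for $\tau^{\pm}(1)\overline{\tau}^{\mp}$ and $\tau^{\pm}(1)\overline{\tau}^{\pm}$ recorded in \eqref{hdef}, together with the vanishing supplied by Lemma~\ref{lem:u1v} and the definition \eqref{eq:kdef} of $k$. No input beyond Assumptions (I)--(III) will be needed.

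First I would note that $\tau^{\pm}(2)\overline{\tau}^{\pm}$ and $\tau^{\pm}(2)\overline{\tau}^{\mp}$ all have conformal weight $\tfrac{3}{2}+\tfrac{3}{2}-2-1=0$, hence lie in $V_0=\CC\vac$; so each is a scalar multiple of $\vac$ and only the scalar has to be identified. To that end I would apply $h(1)$ to both identities of \eqref{hdef}. Since $h$ is even, the mode commutator formula gives $[h(1),\tau^{\pm}(1)]=(h(0)\tau^{\pm})(2)+(h(1)\tau^{\pm})(1)$; here $h(0)\tau^{\pm}=\pm\tau^{\pm}$ because $\tau^{\pm}$ is an $h(0)$-weight vector of weight $\pm1$, while $h(1)\tau^{\pm}=0$ by Lemma~\ref{lem:u1v}, so $[h(1),\tau^{\pm}(1)]=\pm\tau^{\pm}(2)$. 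Using also $h(1)\overline{\tau}^{\pm}=0$ from Lemma~\ref{lem:u1v}, this yields
\[
h(1)\bigl(\tau^{\pm}(1)\overline{\tau}^{\pm}\bigr)=\pm\,\tau^{\pm}(2)\overline{\tau}^{\pm},
\qquad
h(1)\bigl(\tau^{\pm}(1)\overline{\tau}^{\mp}\bigr)=\pm\,\tau^{\pm}(2)\overline{\tau}^{\mp}.
\]
Next I would evaluate the left-hand sides by substituting \eqref{hdef}. For the first: $h(1)\bigl(\tau^{\pm}(1)\overline{\tau}^{\pm}\bigr)=h(1)(\mp2x^{\pm})=\mp2\,h(1)x^{\pm}=0$, since $h(1)x^{\pm}=\dlangle h,x^{\pm}\drangle\vac=0$ was established in the proof of Lemma~\ref{lemKM}; hence $\tau^{\pm}(2)\overline{\tau}^{\pm}=0$. (Alternatively, $\tau^{\pm}(2)\overline{\tau}^{\pm}$ is an $h(0)$-eigenvector of eigenvalue $\pm2$ lying in $\CC\vac$, on which $h(0)$ acts by $0$, so it vanishes.) For the second: $h(1)\bigl(\tau^{\pm}(1)\overline{\tau}^{\mp}\bigr)=h(1)h=\dlangle h,h\drangle\vac=2k\vac$ by \eqref{eq:kdef}; hence $\pm\tau^{\pm}(2)\overline{\tau}^{\mp}=2k\vac$, i.e.\ $\tau^{\pm}(2)\overline{\tau}^{\mp}=\pm2k\vac$. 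Finally, substituting $G^{\pm}=\tau^{\pm}$, $\overline G^{\pm}=\mp\overline{\tau}^{\pm}$ and $c=6k$ gives $G^{\pm}(2)\overline G^{\pm}=\mp\tau^{\pm}(2)\overline{\tau}^{\pm}=0$ and $G^{\pm}(2)\overline G^{\mp}=\pm\tau^{\pm}(2)\overline{\tau}^{\mp}=2k\vac=\tfrac{c}{3}\vac$.

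There is no serious obstacle: the lemma is essentially bookkeeping once \eqref{hdef}, Lemma~\ref{lem:u1v} and \eqref{eq:kdef} are available. The only places demanding care are the sign conventions---in particular that the $h(0)$-weights of $\tau^{\pm}$ are $\pm1$ rather than the $\pm2$ of $x^{\pm}$---and applying the commutator formula with the even element $h$ so that no super-signs intervene.
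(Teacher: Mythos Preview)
Your proof is correct and follows essentially the same route as the paper: both use Lemma~\ref{lem:u1v} to obtain $[h(1),\tau^{\pm}(1)]=\pm\tau^{\pm}(2)$ and then apply $h(1)$ to the identities in \eqref{hdef} to extract $\tau^{\pm}(2)\overline{\tau}^{\mp}=\pm 2k\vac$. The only cosmetic difference is in the vanishing $\tau^{\pm}(2)\overline{\tau}^{\pm}=0$: the paper argues that the $\slt$-morphism $\Sigma\to A(2)B\subseteq\CC\vac$ must be zero, whereas you compute $h(1)(\mp 2x^{\pm})=0$ (or invoke the $h(0)$-eigenvalue); these are equivalent one-line observations.
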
 
\begin{proof}
The image of $\Sigma{\rightarrow}A(2)B{=}\CC\vac $ is $\slt$-invariant, hence is $0$.\ Then
\begin{align*}
\tau^{\pm}(2)\overline{\tau}^{\pm}{=} 
\tau^{+}(2)\overline{\tau}^{-}{+}\tau^{-}(2)\overline{\tau}^{+}=0.
\end{align*} 
The invariant $\tau^{+}(2)\overline{\tau}^{-}{-}\tau^{-}(2)\overline{\tau}^{+}$ is computed as follows{:}\
$h(1) \tau^{\pm}{=}h(1)\overline{\tau}^{\pm}{=}0$ (Lemma \ref{lem:u1v}) together with  commutativity imply
\begin{align}
[h(1),\tau^{\pm}(n)]{=}{\pm} \tau^{\pm}(n+1),\quad [h(1),\overline{\tau}^{\pm}(n)]{=}{\pm} \overline{\tau}^{\pm}(n+1).
\label{h1taun}
\end{align}
Using \eqref{hdef} we thus  find  
\begin{align*}
2k\vac=h(1)h &=h(1) \left(\tau^{\pm}(1)\overline{\tau}^{\mp}\right)=\pm\tau^{\pm}(2)\overline{\tau}^{\mp}.
\end{align*}
\end{proof}

\begin{lem}\label{lemsigma} Let 
\begin{align*}
\sigma{:=}\frac{1}{2}(\tau^+(0)\overline{\tau}^-{-}\tau^{-}(0)\overline{\tau}^+).
%\label{eq:sigma}
\end{align*}
Then $\sigma(0)u=Tu$, $\sigma(1)u{=}u$ and $\sigma(2)u{=}0$ for all $u{\in}\slt$.\ In particular, $\sigma$ is a \emph{nonzero} $\slt$-invariant.
\end{lem}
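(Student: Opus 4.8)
The plan is to extract all the needed identities from super associativity (Borcherds' identity) applied to the single known relation $\tau^{+}(1)\overline{\tau}^{-}=\tau^{-}(1)\overline{\tau}^{+}=h$, then feed these into the Kac--Moody structure of Lemma~\ref{lemKM}. First I would record the $h(0)$-weight of $\sigma$: since $\tau^{\pm}$ and $\overline{\tau}^{\pm}$ have $h(0)$-weight $\pm1$, both $\tau^{+}(0)\overline{\tau}^{-}$ and $\tau^{-}(0)\overline{\tau}^{+}$ have weight $0$, so $\sigma\in V_{2}$ is a zero-weight vector; by Lemma~\ref{lem:AnpA} it lies in $A(0)B$, and one checks $x^{\pm}(0)\sigma=0$ using \eqref{xpmtau}--\eqref{xpmtau0} and super derivation, so $\sigma$ is $\slt$-invariant. (This already makes $\sigma$ a natural candidate for the ``$\omega+\tfrac12 TJ^{0}$-type'' vector in relation (m), but here I only need its action on $\slt$.)

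Next I would compute $\sigma(n)u$ for $u=h,x^{\pm}$ and $n=0,1,2$. The key computation is $\sigma(n)h$. Write $h=\tau^{+}(1)\overline{\tau}^{-}$ and apply super associativity to $\tau^{+}(0)\overline{\tau}^{-}$ acting on a vector, or more efficiently use the commutator/associativity formula expanding $(\tau^{+}(0)\overline{\tau}^{-})(n)$ in terms of the modes $\tau^{+}(i)$, $\overline{\tau}^{-}(j)$. The relevant inputs are: $\tau^{\pm}(m)\tau^{\pm}=\tau^{\pm}(m)\tau^{\mp}=0$ and $\overline{\tau}^{\pm}(m)\overline{\tau}^{\mp}=0$ for $m\geq1$ (from \eqref{a(1)ab(1)b} and super skew-symmetry, since $A(m)A=B(m)B=0$ for $m=1,2$ and weight reasons kill higher modes — here I would invoke Lemma~\ref{lem:AnpA} or directly the conformal grading to rule out $m\geq3$); the mixed brackets $\tau^{\pm}(2)\overline{\tau}^{\mp}=\pm2k\vac$, $\tau^{\pm}(1)\overline{\tau}^{\mp}=h$, $\tau^{\pm}(1)\overline{\tau}^{\pm}=\mp2x^{\pm}$ from Lemmas~\ref{lem:tau2tau}, \ref{hdef}; and $u(1)v=0$ for $u\in\slt$, $v\in A\oplus B$ from Lemma~\ref{lem:u1v}, together with the commutator rules \eqref{h1taun}. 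Expanding $(\tau^{+}(0)\overline{\tau}^{-})(n)h$ this way, and symmetrically $(\tau^{-}(0)\overline{\tau}^{+})(n)h$, the terms combine so that the symmetric part cancels in $\sigma$ and the antisymmetric part gives $\sigma(0)h=Th$, $\sigma(1)h=h$, $\sigma(2)h=0$. Then $\sigma(n)x^{\pm}$ follows by applying $x^{\mp}(0)$ (resp.\ the $\slt$-action) to $\sigma(n)h$ and using that $\sigma$ is $\slt$-invariant, i.e.\ $[\,u(0),\sigma(n)\,]=(\sigma(0)u^{?})\dots$ — more cleanly, $x^{\pm}(0)(\sigma(n)h)=\sigma(n)(x^{\pm}(0)h)=\mp2\,\sigma(n)x^{\pm}$, so each $\sigma(n)x^{\pm}$ is read off. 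This yields $\sigma(0)u=Tu$, $\sigma(1)u=u$, $\sigma(2)u=0$ for all $u\in\slt$.

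Finally, $\sigma$ is nonzero: if $\sigma=0$ then $\sigma(1)u=u=0$ for all $u\in\slt$, contradicting $h\neq0$ (indeed $V_{1}\cong\slt$ is nonzero by hypothesis (I)--(II)). Hence $\sigma$ is a nonzero $\slt$-invariant in $V_{2}=A(0)B$.

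The main obstacle I anticipate is the bookkeeping in the super-associativity expansion of $(\tau^{\pm}(0)\overline{\tau}^{\mp})(n)u$: one must carefully separate the contributions of the modes $\tau^{\pm}(i)$ for $i=0,1,2$ and $\overline{\tau}^{\mp}(j)$, track the signs from the super (i.e.\ odd) parity of the $\tau$'s and the binomial coefficients in Borcherds' identity, and verify that exactly the symmetric combination drops out of $\sigma$ while the antisymmetric one produces the clean answers $Tu$, $u$, $0$. Everything else is a direct consequence of the lemmas already established.
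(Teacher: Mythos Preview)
Your plan is correct and would go through, but the paper takes a shorter route that sidesteps exactly the bookkeeping you flag as the main obstacle. The paper obtains $\slt$-invariance of $\sigma$ conceptually, as the image of the generator of the trivial summand $\Lambda\subset A\otimes B$ under the $\slt$-morphism $u\otimes v\mapsto u(0)v$; your direct check via the derivation property of $x^{\pm}(0)$ is equivalent. The real difference is in computing $\sigma(n)h$. Rather than expanding $(\tau^{+}(0)\overline{\tau}^{-})(n)$ by super associativity, the paper flips the problem with skew-symmetry: $\sigma(1)h=h(1)\sigma$, and then the single commutator relation \eqref{h1taun} (which follows from Lemma~\ref{lem:u1v}) immediately gives $h(1)\bigl(\tau^{+}(0)\overline{\tau}^{-}-\tau^{-}(0)\overline{\tau}^{+}\bigr)=\tau^{+}(1)\overline{\tau}^{-}+\tau^{-}(1)\overline{\tau}^{+}=2h$. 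For $\sigma(2)$ the paper notes that $\sigma(2)\colon\slt\to\CC\vac$ is an $\slt$-map from the adjoint to the trivial module, hence zero. Finally, $\sigma(0)u$ is obtained in one line from skew-symmetry and invariance: $\sigma(0)u=-u(0)\sigma+T(u(1)\sigma)=0+T(\sigma(1)u)=Tu$. So your approach trades these three short tricks for a single (longer) Borcherds expansion; both are valid, but the paper's version avoids the sign and binomial tracking you anticipate.
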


\begin{proof} $\sigma$ is an $\slt$-invariant in $A(0)B\subseteq V_{2}$ because it generates the image
of $\Lambda$ under the $\slt$-morphism defined by $A{\otimes}B{\rightarrow} A(0)B$. 

$\sigma(2)\slt\in \CC \vac$ implies $\sigma(2)\slt=0$ since $\CC \vac$ is a trivial $\slt$ representation. 
By skewsymmetry, $\sigma(1)u{=}u(1)\sigma$ for all $u{\in }\slt$. Hence, using \eqref{h1taun} and Lemma~\ref{lem:u1v}, we find
\begin{align*}
\sigma(1)h&=\frac{1}{2}h(1)\left(\tau^+(0)\overline{\tau}^-{-}\tau^{-}(0)\overline{\tau}^+\right)\\
&=\frac{1}{2}\left(\tau^+(1)\overline{\tau}^-{+}\tau^-(1)\overline{\tau}^+\right){=}h,
\end{align*}
 from  \eqref{hdef}.\ This proves, in particular, that $\sigma{\not=}0$.
Since $\sigma$ is $\slt$-invariant and $h$ generates $A(1)B$ as an $\slt$--module, it follows that
$\sigma(1)u{=}u$ for all $u{\in}\slt$. Furthermore, from skew-symmetry we find that
\begin{align*}
\sigma(0)u &= -u(0)\sigma +T(u(1)\sigma)\\
&= 0+T(\sigma(1)u)=Tu,
\end{align*}
using the $\slt$-invariance of $\sigma$. 
\end{proof}

\begin{lem}\label{lemsigmatau} 
$\sigma(1)v {=}\frac{3}{2}v $ and $\sigma(2)v=0$ for all $v{\in}A\oplus B$.
\end{lem}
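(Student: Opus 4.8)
The plan is to exploit the two available presentations of $\sigma$ together with its $\slt$-invariance, arranging each super-associativity expansion so that every nested product that appears lands in a subspace already known to vanish — namely $A(n)A{=}B(n)B{=}0$ ($n{\ge}0$, from (III) and Lemma~\ref{lem:AnpA}) or $\slt(1)(A{\oplus}B){=}0$ (Lemma~\ref{lem:u1v}).

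First I would record a second expression for $\sigma$. Super skew-symmetry applied to $\tau^{\pm}(0)\overline{\tau}^{\mp}$, using \eqref{hdef} and Lemma~\ref{lem:tau2tau} and observing that the $T$-correction terms are $T$ applied to scalar multiples of $\vac$, hence zero, gives $\overline{\tau}^{-}(0)\tau^{+}{=}\tau^{+}(0)\overline{\tau}^{-}{-}Th$ and $\overline{\tau}^{+}(0)\tau^{-}{=}\tau^{-}(0)\overline{\tau}^{+}{-}Th$, whence
\[
\sigma=\tfrac12\bigl(\tau^{+}(0)\overline{\tau}^{-}-\tau^{-}(0)\overline{\tau}^{+}\bigr)=\tfrac12\bigl(\overline{\tau}^{-}(0)\tau^{+}-\overline{\tau}^{+}(0)\tau^{-}\bigr).
\]
Since $\slt(0)\sigma{=}0$, the commutator formula gives $[u(0),\sigma(n)]{=}(u(0)\sigma)(n){=}0$ for $u{\in}\slt$, so each $\sigma(n)$ is an $\slt$-endomorphism of $V$; as $\tau^{-}{=}x^{-}(0)\tau^{+}$ and $\overline{\tau}^{-}{=}x^{-}(0)\overline{\tau}^{+}$, it suffices to compute $\sigma(1)$ and $\sigma(2)$ on the single vectors $\tau^{+}$ and $\overline{\tau}^{+}$.

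For $v{=}\tau^{+}$ I use the first presentation. Expanding $(\tau^{\pm}(0)\overline{\tau}^{\mp})(n)\tau^{+}$ by super-associativity produces one term whose inner product is $\tau^{\pm}(0)\tau^{+}\in A(0)A{=}0$ and one with inner product $\overline{\tau}^{\mp}(n)\tau^{+}$; by weight the latter lies in $B(2)A\subseteq V_0{=}\CC\vac$ when $n{=}2$, hence is annihilated by $\tau^{\pm}(0)$, giving $\sigma(2)\tau^{+}{=}0$, and in $B(1)A\subseteq\slt$ when $n{=}1$. Using the values $\overline{\tau}^{-}(1)\tau^{+}{=}{-}h$, $\overline{\tau}^{+}(1)\tau^{+}{=}2x^{+}$ (skew-symmetry from \eqref{hdef} and Lemma~\ref{lem:tau2tau}) together with $\tau^{+}(0)h{=}{-}h(0)\tau^{+}{=}{-}\tau^{+}$ and $\tau^{-}(0)x^{+}{=}{-}x^{+}(0)\tau^{-}{=}{-}\tau^{+}$ (skew-symmetry, \eqref{xpmtau} and Lemma~\ref{lem:u1v}), the two terms assemble to $\sigma(1)\tau^{+}{=}\tfrac12(\tau^{+}{+}2\tau^{+}){=}\tfrac32\tau^{+}$. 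For $v{=}\overline{\tau}^{+}$ I run the mirror-image computation from the \emph{second} presentation: the inner products are now $\overline{\tau}^{\pm}(0)\overline{\tau}^{+}\in B(0)B{=}0$ and $\tau^{\pm}(n)\overline{\tau}^{+}$, again in $\CC\vac$ for $n{=}2$ and in $\slt$ for $n{=}1$, and identical bookkeeping yields $\sigma(2)\overline{\tau}^{+}{=}0$ and $\sigma(1)\overline{\tau}^{+}{=}\tfrac32\overline{\tau}^{+}$. Applying $x^{-}(0)$ and the equivariance of $\sigma(1),\sigma(2)$ then extends all four identities to $\tau^{-}$ and $\overline{\tau}^{-}$.

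The step that genuinely requires the idea above is the choice of which presentation to use on which half of $A{\oplus}B$. If instead one expands $\sigma(1)v$ and $\sigma(2)v$ using only $\sigma{=}\tfrac12(\tau^{+}(0)\overline{\tau}^{-}{-}\tau^{-}(0)\overline{\tau}^{+})$ and tries to finish with skew-symmetry alone, the identities that emerge are tautological: they merely re-express $\sigma(n)\overline{\tau}^{\pm}$ in terms of the (not yet controlled) action on $A{\oplus}B$ of the symmetric part $\langle\tau^{\pm}(0)\overline{\tau}^{\pm}\rangle$ of $A(0)B$. Passing to the second presentation when $v{\in}B$ forces those unwanted inner products into $B(0)B{=}0$, which is exactly what breaks the circularity; establishing that second presentation, and checking that its $T$-corrections vanish, is the only point where a little care is needed.
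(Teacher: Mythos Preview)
Your proof is correct and follows the same super-associativity/anticommutator strategy as the paper: expand $\sigma(n)$ via $(\tau^{\pm}(0)\overline{\tau}^{\mp})(n)=[\tau^{\pm}(0),\overline{\tau}^{\mp}(n)]_{+}$, arrange for one leg of the anticommutator to land in $A(0)A=0$ (resp.\ $B(0)B=0$), and evaluate the surviving leg using \eqref{hdef}, Lemma~\ref{lem:tau2tau}, and Lemma~\ref{lem:u1v}. The paper carries this out only for $\tau^{+}$ and then writes ``similar results follow \ldots\ for $\overline{\tau}^{\pm}$''; your explicit second presentation $\sigma=\tfrac12(\overline{\tau}^{-}(0)\tau^{+}-\overline{\tau}^{+}(0)\tau^{-})$ is precisely what that ``similarly'' amounts to, and your remark that the first presentation alone leads to circularity on $B$ is a useful clarification the paper omits.
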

\begin{proof}
Using superassociativity we find
\begin{align*}
\sigma(1)=\frac{1}{2}[\tau^{+}(0), \overline{\tau}^{-}(1)]
-\frac{1}{2}[\tau^{-}(0), \overline{\tau}^{+}(1)].
\end{align*}
Hence 
\begin{align*}
\sigma(1)\tau^{+}&=\frac{1}{2}\tau^{+}(0)\overline{\tau}^{-}(1)\tau^{+}
-\frac{1}{2}\tau^{-}(0)\overline{\tau}^{+}(1)\tau^{+}\\
&=-\frac{1}{2}\tau^{+}(0)h-\frac{1}{2}\tau^{-}(0)(2x^{+})\\
&=\frac{1}{2}\tau^{+}+\tau^{+}=\frac{3}{2}\tau^{+},
\end{align*}
using $\tau^{\pm}(0)\tau^{+}=0$  (by  assumption (III), \eqref{hdef} and super skew-symmetry).\ 
Using superassociativity, we similarly find
\begin{align*}
\sigma(2)=\frac{1}{2}[\tau^{+}(0), \overline{\tau}^{-}(2)]
-\frac{1}{2}[\tau^{-}(0), \overline{\tau}^{+}(2)].
\end{align*}
Hence 
\begin{align*}
\sigma(2)\tau^{+}&=\frac{1}{2}\tau^{+}(0)\overline{\tau}^{-}(2)\tau^{+}
-\frac{1}{2}\tau^{-}(0)\overline{\tau}^{+}(2)\tau^{+}=0.
\end{align*}
Similar results follow for $\tau^{-}$ by $\slt$-symmetry and for $\overline{\tau}^{\pm}$. 
\end{proof}

\subsection{Assumption (IV)}
We next  obtain properties (l) and (m).
\begin{lem}\label{lem:tau0tau}
We have
\begin{align*}
\tau^{\pm}(0)\overline{\tau}^{\pm}{=}\mp Tx^{\pm},\ \  
\tau^{+}(0)\overline{\tau}^{-}+\tau^{-}(0)\overline{\tau}^{+}{=}Th.
\end{align*}
 Consequently,
\begin{align*}
&G^{\pm}(0)\overline{G}^{\pm}{=}TJ^{\pm},\text{ and }G^{\pm}(0)\overline{G}^{\mp}{=}\sigma\pm \frac{1}{2}TJ^0.
\end{align*}
\end{lem}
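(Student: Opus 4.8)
The plan is to use the $\slt$-module morphism
\begin{align*}
A\otimes B\longrightarrow A(0)B\subseteq V_{2},\qquad u\otimes v\mapsto u(0)v,
\end{align*}
which carries the decomposition $A\otimes B=\Lambda\oplus\Sigma$ of \eqref{Lambdadef}--\eqref{Sigmadef} into $V_2$. By Lemma~\ref{lemsigma} the image of $\Lambda$ is the nonzero invariant line $\CC\sigma$, while the image $W$ of $\Sigma$ is a homomorphic image of the simple adjoint $\slt$-module, so $W=0$ or $W\cong\slt$. In particular the weight-zero basis vector $\tau^{+}\otimes\overline{\tau}^{-}+\tau^{-}\otimes\overline{\tau}^{+}$ of $\Sigma$ is sent to $s:=\tau^{+}(0)\overline{\tau}^{-}+\tau^{-}(0)\overline{\tau}^{+}\in W$. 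The first two steps are to show $W=T\slt$ and then $s=Th$.

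To identify $W$, note that $T\slt$ is an $\slt$-submodule of $V_2$ for the action $u\mapsto u(0)$, since this action commutes with $T=L^{U}(-1)$ (the $m=-1$, $n=0$ case of \eqref{Lun}); being a quotient of the simple adjoint module, $T\slt$ is $0$ or $\cong\slt$. Hypothesis~(IV) gives $T\slt\cap A(0)B\neq0$, so $T\slt\neq0$ and $T\slt\cong\slt$; and, being a nonzero submodule of a simple module, $T\slt\cap A(0)B=T\slt$, i.e.\ $T\slt\subseteq A(0)B$. Now $A(0)B=\CC\sigma+W$, and an invariant vector cannot lie in the adjoint module $W$, so $\CC\sigma\cap W=0$ and $A(0)B=\CC\sigma\oplus W$. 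As $T\slt$ is an adjoint submodule it lies in the adjoint isotypic component $W$; hence $W\neq0$, so $W\cong\slt$, and then $T\slt\subseteq W$ with both simple forces $W=T\slt$. Here hypothesis~(IV) is precisely what excludes the degenerate case $W=0$, equivalently $A(0)B=\CC\sigma$.

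Thus $s\in T\slt$. Since $u(0)$ acts on zero-modes by the derivation rule and $\tau^{\pm},\overline{\tau}^{\pm}$ have $h(0)$-weight $\pm1$, we get $h(0)s=0$, so $s$ lies in the $h(0)$-weight-zero line of $T\slt$, namely $\CC Th$; write $s=\mu Th$. To see $\mu=1$, apply $L^{U}(1)$: by \eqref{eq:Lvn}, $\overline{\tau}^{\pm}(0)\vac=0$, and $\tau^{\pm}(1)\overline{\tau}^{\mp}=h$ from \eqref{hdef} we get $L^{U}(1)s=\tau^{+}(1)\overline{\tau}^{-}+\tau^{-}(1)\overline{\tau}^{+}=2h$, while by \eqref{Lun}, $L^{U}(1)(Th)=L^{U}(1)h(-2)\vac=2h(-1)\vac=2h$. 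Hence $\mu=1$ and $\tau^{+}(0)\overline{\tau}^{-}+\tau^{-}(0)\overline{\tau}^{+}=Th$. Applying $x^{\pm}(0)$, which is a derivation commuting with $T$, and using \eqref{xpmtau}--\eqref{xpmtau0} together with $x^{\pm}(0)h=-h(0)x^{\pm}=\mp2x^{\pm}$ gives $2\tau^{\pm}(0)\overline{\tau}^{\pm}=x^{\pm}(0)(Th)=\mp2Tx^{\pm}$, i.e.\ $\tau^{\pm}(0)\overline{\tau}^{\pm}=\mp Tx^{\pm}$.

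The stated consequences are then bookkeeping: combining $\tau^{+}(0)\overline{\tau}^{-}+\tau^{-}(0)\overline{\tau}^{+}=Th$ with $\sigma=\frac12(\tau^{+}(0)\overline{\tau}^{-}-\tau^{-}(0)\overline{\tau}^{+})$ yields $\tau^{\pm}(0)\overline{\tau}^{\mp}=\pm\sigma+\frac12Th$, and substituting $G^{\pm}=\tau^{\pm}$, $\overline{G}^{\pm}=\mp\overline{\tau}^{\pm}$, $J^{0}=h$, $J^{\pm}=x^{\pm}$ gives $G^{\pm}(0)\overline{G}^{\pm}=TJ^{\pm}$ and $G^{\pm}(0)\overline{G}^{\mp}=\sigma\pm\frac12TJ^{0}$. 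The main obstacle is the module-theoretic identification $W=T\slt$, and in particular recognizing that (IV) is exactly the hypothesis needed to rule out $W=0$; the normalization $\mu=1$ via $L^{U}(1)$ is the only other point needing attention, everything else following from the derivation property of zero-modes, super skew-symmetry, and relations already established.
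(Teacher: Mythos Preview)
Your proof is correct and follows essentially the same approach as the paper's: both use assumption~(IV) to place $T\slt$ inside $A(0)B$, identify the image of $\Sigma$ with $T\slt$, and then fix the normalization constant by applying $L^{U}(1)$. The only cosmetic differences are that the paper computes $L^{U}(1)$ on the weight~$-2$ vector $\tau^{-}(0)\overline{\tau}^{-}$ (getting $\kappa=1$ directly) and then invokes $\slt$-symmetry, whereas you work with the weight~$0$ vector $s$ and then apply $x^{\pm}(0)$; and your justification of the identification $W=T\slt$ is spelled out more fully than the paper's.
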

\begin{proof}
By assumption (IV),  there exists $u\in \slt$ such that $Tu\in A(0)B$. 
Since $A(0)B$ is an $\slt$-module and $T\slt{\cong}\slt$ it follows that $T\slt{\subseteq}A(0)B$. 
Thus there is a morphism of $\slt$ adjoint-modules
$\Sigma{\rightarrow} T\slt{\subseteq} A(0)B$. In particular
\begin{align*}
\tau^{-}(0)\overline{\tau}^{-} {=} \kappa T x^{-},
\end{align*}
for some  $\kappa\neq 0$. Using \eqref{eq:Lvn} we have
\begin{align*}
L^{U}(1)\tau^{-}(0)\overline{\tau}^{-}{=}\tau^{-}(1)\overline{\tau}^{-}=2x^{-},
\end{align*}
from \eqref{hdef}. This implies
\[
2x^{-}{=}L^{U}(1)\kappa L^{U}(-1)x^{-}{=}2\kappa x^{-}.
\]
Hence $\kappa{=}1$ and $\tau^{-}(0)\overline{\tau}^{-} {=}  T x^{-}$. The other relations   follow by $\slt$-symmetry. 
\end{proof}

\begin{lem} 
\label{lem:vcom}
The modes of $\tau^{\pm},\overline{\tau}^{\pm}$ satisfy the commutator relations
\begin{align}
[\tau^{\pm}(m),\overline{\tau}^{\pm}(n)] &=\pm (n-m)x^{\pm}(m+n-1),
\label{eq:tptp}\\
[\tau^{\pm}(m),\overline{\tau}^{\mp}(n)] &=\pm \sigma(m+n)
+\frac{1}{2}(m-n)h(m+n-1)
\pm m(m-1)k\delta_{m+n+1,0}.
\label{eq:tptm}
\end{align}
 \end{lem}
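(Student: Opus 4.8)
The plan is to derive the two commutator relations \eqref{eq:tptp} and \eqref{eq:tptm} directly from the standard Borcherds commutator formula together with the products computed in the preceding lemmas. Recall that for any states $a,b$ in an SVOA one has
\begin{align*}
[a(m),b(n)]=\sum_{i\geq 0}\binom{m}{i}\,(a(i)b)(m+n-i),
\end{align*}
where the sum is finite because $a(i)b=0$ for $i$ sufficiently large. So the first step is to record that the only nonzero products $\tau^{\pm}(i)\overline{\tau}^{\pm}$ and $\tau^{\pm}(i)\overline{\tau}^{\mp}$ are those with $i\in\{0,1,2\}$: indeed $A(i)B\subseteq V_{2-i}$ forces $\tau(i)\overline\tau=0$ for $i\geq 3$ (and for $i<0$ the commutator formula doesn't involve those terms), while $A(i)B=V_{3-2}=V_2, V_1, \CC\vac$ for $i=0,1,2$ respectively. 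Hence each commutator is a sum of exactly three terms.

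Next I would substitute the known values of these three products. For the ``same sign'' case: $\tau^{\pm}(0)\overline{\tau}^{\pm}=\mp Tx^{\pm}$ from Lemma~\ref{lem:tau0tau}, $\tau^{\pm}(1)\overline{\tau}^{\pm}=\mp 2x^{\pm}$ from \eqref{hdef}, and $\tau^{\pm}(2)\overline{\tau}^{\pm}=0$ from Lemma~\ref{lem:tau2tau}. Plugging into the commutator formula and using $(Tx^{\pm})(m+n)=-(m+n)x^{\pm}(m+n-1)$ gives
\begin{align*}
[\tau^{\pm}(m),\overline{\tau}^{\pm}(n)]=\mp\bigl(-(m+n)\bigr)x^{\pm}(m+n-1)\mp m\cdot 2\,x^{\pm}(m+n-1)=\pm(n-m)x^{\pm}(m+n-1),
\end{align*}
which is \eqref{eq:tptp}. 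For the ``opposite sign'' case the three products are $\tau^{\pm}(0)\overline{\tau}^{\mp}=\sigma\pm\frac12 Th$ (Lemma~\ref{lem:tau0tau}), $\tau^{\pm}(1)\overline{\tau}^{\mp}=\pm h$ (Lemma from \eqref{hdef}), and $\tau^{\pm}(2)\overline{\tau}^{\mp}=\pm 2k\vac$ (Lemma~\ref{lem:tau2tau}). The $i=0$ term contributes $\sigma(m+n)\pm\frac12(Th)(m+n)=\sigma(m+n)\mp\frac12(m+n)h(m+n-1)$; the $i=1$ term contributes $m\cdot(\pm h)(m+n-1)=\pm m\,h(m+n-1)$; the $i=2$ term contributes $\binom{m}{2}(\pm2k\vac)(m+n-2)=\pm m(m-1)k\,\delta_{m+n-2,-1}\vac$, i.e. $\pm m(m-1)k\,\delta_{m+n+1,0}$. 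Collecting the $h$-terms gives $\pm(m-\frac12(m+n))h(m+n-1)=\pm\frac12(m-n)h(m+n-1)$, which is exactly \eqref{eq:tptm}.

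The calculation is essentially mechanical once the inputs are in hand, so there is no serious obstacle; the only points requiring a word of care are the bookkeeping of signs (the $\mp$ versus $\pm$ in $\tau^{\pm}(0)\overline{\tau}^{\pm}=\mp Tx^{\pm}$, and the sign picked up when rewriting $(Ta)(n)$ as $-n\,a(n-1)$ via $(Ta)(n)=-n\,a(n-1)$ since $T=L(-1)$), and the index shift in the delta function: $(\vac)(k)$ is nonzero only for $k=-1$ as $\vac(k)v=\delta_{k,-1}v$, so the $i=2$ term survives precisely when $m+n-2=-1$. I would also remark that one should write out the derivation only for the upper-sign choices and invoke $\slt$-symmetry (conjugation by $x^{\mp}(0)$) together with the $\varphi$-equivariance to get the lower signs, exactly as was done in the proofs of the earlier lemmas. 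Finally it is worth noting consistency: setting $m=1,n=-1$ in \eqref{eq:tptm} recovers $[\tau^{\pm}(1),\overline{\tau}^{\mp}(-1)]=\pm\sigma(0)\pm h(-1)$ which is compatible with $\sigma(0)=T$ on $\slt$ and with property (m), a useful sanity check to include.
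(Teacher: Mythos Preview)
Your approach is identical to the paper's: record the products $\tau^{\pm}(i)\overline{\tau}^{\pm}$ and $\tau^{\pm}(i)\overline{\tau}^{\mp}$ for $i=0,1,2$ from the preceding lemmas and feed them into the super commutator formula. The derivation of \eqref{eq:tptp} is correct.

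For \eqref{eq:tptm}, however, you have mis-transcribed two of the three inputs. Equation \eqref{hdef} gives $\tau^{\pm}(1)\overline{\tau}^{\mp}=h$ with \emph{no} sign, not $\pm h$; and from the definition of $\sigma$ together with Lemma~\ref{lem:tau0tau} one has $\tau^{\pm}(0)\overline{\tau}^{\mp}=\pm\sigma+\tfrac12\,Th$, i.e.\ the $\pm$ sits on $\sigma$, not on $Th$ (the formula $G^{\pm}(0)\overline{G}^{\mp}=\sigma\pm\tfrac12 TJ^0$ that you are quoting involves $\overline{G}^{\mp}=\pm\overline{\tau}^{\mp}$, which absorbs a sign). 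With your incorrect inputs the calculation produces $\sigma(m+n)\pm\tfrac12(m-n)h(m+n-1)\pm m(m-1)k\,\delta_{m+n+1,0}$, which for the lower sign does \emph{not} agree with \eqref{eq:tptm}: the lemma has $\pm\sigma(m+n)$ and a sign-free $\tfrac12(m-n)h(m+n-1)$. Once the two inputs are corrected the same mechanical computation gives the stated result.
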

\begin{proof} 
Properties (k)--(m), which we have already established, show that
\begin{align*}
& \tau^{\pm}(0)\overline{\tau}^{\pm}{=}\mp Tx^{\pm},\quad 
\tau^{\pm}(1)\overline{\tau}^{\pm}{=}\mp 2x^{\pm},\quad
\tau^{\pm}(2)\overline{\tau}^{\pm}{=}0,\\
& \tau^{\pm}(0)\overline{\tau}^{\mp}{=}\frac{1}{2} Th \pm\sigma,\quad 
\tau^{\pm}(1)\overline{\tau}^{\mp}{=}h,\quad
\tau^{\pm}(2)\overline{\tau}^{\mp}{=}\pm 2k\vac,
\end{align*}
which imply the result  from the super commutator formula.
\end{proof}

 \begin{lem} 
\label{lem:sig0tau}
$\sigma(0)v {=}Tv $ for all $v{\in}A\oplus B$.
 \end{lem}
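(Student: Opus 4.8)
The plan is to prove $\sigma(0)v = Tv$ for all $v \in A\oplus B$ by the same strategy that worked for $\sigma$ acting on $\slt$ in Lemma~\ref{lemsigma}, namely via super skew-symmetry, but now using the information about $\sigma$'s action on $A\oplus B$ that we have already assembled in Lemma~\ref{lemsigmatau}. By super skew-symmetry applied to the product $\sigma(n)v$ (recall $\sigma$ is even of weight $2$ and $v$ is odd of weight $\tfrac32$), we have
\begin{align*}
\sigma(0)v = -v(0)\sigma + T\bigl(v(1)\sigma\bigr) - \tfrac12 T^2\bigl(v(2)\sigma\bigr) + \cdots,
\end{align*}
and because $\sigma \in V_2$ while $v(n)\sigma$ has weight $\tfrac32 + 2 - n - 1 = \tfrac52 - n$, only the terms with $n = 0,1,2$ can be nonzero (for $n\ge 3$ the weight is negative and the target is zero in a CFT-type SVOA). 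So the identity collapses to $\sigma(0)v = -v(0)\sigma + T(v(1)\sigma) - \tfrac12 T^2(v(2)\sigma)$.

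Next I would evaluate the three surviving terms. By super skew-symmetry again, $v(1)\sigma = \sigma(1)v = \tfrac32 v$ and $v(2)\sigma$ is (up to sign and a derivative correction that vanishes by weight) controlled by $\sigma(2)v = 0$; more carefully, $v(2)\sigma = \sigma(2)v - T(\sigma(3)v) + \cdots = 0$ since $\sigma(2)v = 0$ from Lemma~\ref{lemsigmatau} and the higher terms vanish on weight grounds. It remains to compute $v(0)\sigma$. For this I would use that $v(0)\sigma$ lies in $A(0)B + B(0)A$ shifted appropriately — more precisely $v(0)\sigma \in V_{3/2}(0)V_2 \subseteq V_{5/2}$ — and relate it back to $\sigma(0)v$ via skew-symmetry a second time, or alternatively compute $v(0)\sigma$ directly using the explicit formula $\sigma = \tfrac12(\tau^+(0)\overline\tau^- - \tau^-(0)\overline\tau^+)$ together with super associativity and the known products $\tau^\pm(0)\tau^\pm = 0$, $\tau^\pm(1)\overline\tau^\mp = h$, $\tau^\pm(1)\overline\tau^\pm = \mp 2x^\pm$ from \eqref{hdef} and Lemma~\ref{lem:u1v}. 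The cleanest route is probably: from skew-symmetry $v(0)\sigma = -\sigma(0)v + T(\sigma(1)v) - \tfrac12 T^2(\sigma(2)v) = -\sigma(0)v + \tfrac32 Tv$, so substituting back gives $\sigma(0)v = \sigma(0)v - \tfrac32 Tv + \tfrac32 Tv$, which is a tautology — so that shortcut fails and I genuinely need an independent computation of one of the two sides.

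Therefore the real work, and the main obstacle, is computing $v(0)\sigma$ (equivalently $\sigma(0)v$) from scratch. I would take $v = \tau^+$ and expand $\sigma(0)\tau^+ = \tfrac12\bigl(\tau^+(0)\overline\tau^-\bigr)(0)\tau^+ - \tfrac12\bigl(\tau^-(0)\overline\tau^+\bigr)(0)\tau^+$, then apply super associativity to rewrite each $(\tau^\epsilon(0)\overline\tau^{\epsilon'})(0)\tau^+$ as an alternating sum of iterated modes $\tau^\epsilon(0-i)\overline\tau^{\epsilon'}(i)\tau^+ \mp \overline\tau^{\epsilon'}(-1-i)\tau^\epsilon(i)\tau^+$. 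Most terms die: $\tau^\epsilon(i)\tau^+ = 0$ for $i \ge 0$ by assumption (III) and skew-symmetry, $\overline\tau^{\epsilon'}(i)\tau^+ = 0$ for $i \ge 1$ except $i = 1$ which gives $h$ or $\mp 2x^\pm$, and $\overline\tau^{\epsilon'}(i)\tau^+$ for $i \le 0$ feeds into known weight-$2$ or lower products. What survives should be expressible in terms of $\tau^\pm(0)h$, $\tau^\pm(0)x^\pm$, and $\tau^\pm(-1)(\text{lower})$, all of which are pinned down by \eqref{xpmtau}, \eqref{xpmtau0}, Lemma~\ref{lem:u1v} and the translation-covariance \eqref{eq:Lvn}; assembling these with the right binomial coefficients should yield exactly $T\tau^+$. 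The $\overline\tau^\pm$ cases then follow by the $\slt$-symmetry / the $\varphi$-symmetry between $A$ and $B$, exactly as in the previous lemmas. I expect the bookkeeping of which associativity terms survive to be the only delicate point; conceptually it is forced, since $\sigma$ must act as the translation operator on all of $V_{3/2}$ if $V$ is to be $\Nfour$.
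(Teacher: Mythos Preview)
Your approach is correct and is essentially the one the paper takes: a direct computation of $\sigma(0)\tau^{+}$ via super associativity, using $\tau^{\pm}(i)\tau^{+}=0$ for $i\ge 0$ to kill half the terms, then super skew-symmetry and the known weight-$1$ products to finish. Two small remarks.

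First, a harmless index slip: with $t=0$ super associativity gives
\[
(\tau^{\epsilon}(0)\overline{\tau}^{\epsilon'})(0)
=\tau^{\epsilon}(0)\overline{\tau}^{\epsilon'}(0)
+\overline{\tau}^{\epsilon'}(0)\tau^{\epsilon}(0),
\]
since $\binom{0}{i}=\delta_{i,0}$; the second summand carries index $0$, not $-1-i$, and the sign is $+$ (both states are odd). This does not affect your argument because only $i=0$ survives anyway.

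Second, the paper streamlines your two-half computation by first invoking \eqref{eq:tptm} with $m=n=0$ to get $\sigma(0)=[\tau^{+}(0),\overline{\tau}^{-}(0)]$ directly. Then $\sigma(0)\tau^{+}=\tau^{+}(0)\overline{\tau}^{-}(0)\tau^{+}$ (the other half dies by (III)), one rewrites $\overline{\tau}^{-}(0)\tau^{+}=\tau^{+}(0)\overline{\tau}^{-}-Th$ via skew-symmetry, uses $\tau^{+}(0)^{2}=0$, and closes with $(Th)(0)=0$, $(Th)(1)=-h(0)$. Your route via both halves of the definition of $\sigma$ works too, but the second half then requires $\tau^{+}(0)\overline{\tau}^{+}=-Tx^{+}$ from Lemma~\ref{lem:tau0tau}, which you should list among the inputs you feed in.
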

\begin{proof} Using \eqref{eq:tptm} we have
\begin{align*}
\sigma(0)=[\tau^{+}(0),\overline{\tau}^{-}(0)].
\end{align*}
Thus we find using super skew-symmetry that
\begin{align*}
\sigma(0)\tau^{+}&{=}\tau^{+}(0)\overline{\tau}^{-}(0) \tau^{+}{+}0\\
&{=}\tau^{+}(0)\left(\tau^{+}(0)\overline{\tau}^{-} {-}T(\tau^{+}(1)\overline{\tau}^{-})\right)\\
&{=} 0 {-}\tau^{+}(0)Th\\
&{=}(Th)(0)\tau^{+}{-}T((Th)(1)\tau^{+})+0\\
&{=} 0{+}T\tau^{+},
\end{align*}
using $(Th)(0)=0$ and $(Th)(1)=-h(0)$. A similar argument applies to the remaining elements of $A\oplus B$.
\end{proof}

 \begin{lem} \label{lem: siguv}
For all $u\in A(1)B$ and $v\in A\oplus B$ we have
\begin{align}
[\sigma(m),u(n)] 
&{=}-nu(m+n-1),
\label{eq:sigun}
\\
[\sigma(m),v(n)] 
%&=\sum_{i\ge 0}\binom{m}{i}(\sigma(i)\tau)(m+n-i)
%\notag
%\\
&
%=(D\tau)(m+n)+\frac{3}{2}m\tau (m+n-1)
{=}\left(\frac{1}{2}m-n\right)v(m+n-1).
\label{eq:sigvn}
\end{align}
 \end{lem}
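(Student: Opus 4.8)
The plan is to obtain both identities at once from the Borcherds commutator formula, using only what has already been computed about the nonnegative modes of $\sigma$. Since $\sigma\in V_2$ lies in the even part of $U$, the super-commutator reduces to the ordinary one, and
\[
[\sigma(m),w(n)]=\sum_{j\ge 0}\binom{m}{j}\bigl(\sigma(j)w\bigr)(m+n-j)
\]
for any $w$. So the lemma is essentially a matter of identifying the terms $\sigma(j)w$ and re-indexing.

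First I would observe that only $j=0,1,2$ contribute. The operator $\sigma(j)$ shifts conformal weight by $1-j$; as $U$ is of CFT-type it has no vectors of negative weight, and $w$ has weight $1$ when $w\in\slt=A(1)B$ and weight $\frac32$ when $w\in A\oplus B$, so $\sigma(j)w=0$ once $j\ge 3$. Then I would substitute the values established earlier: for $u\in\slt$, Lemma~\ref{lemsigma} gives $\sigma(0)u=Tu$, $\sigma(1)u=u$, $\sigma(2)u=0$, whence
\[
[\sigma(m),u(n)]=(Tu)(m+n)+m\,u(m+n-1);
\]
and for $v\in A\oplus B$, Lemmas~\ref{lem:sig0tau} and~\ref{lemsigmatau} give $\sigma(0)v=Tv$, $\sigma(1)v=\tfrac32 v$, $\sigma(2)v=0$, whence
\[
[\sigma(m),v(n)]=(Tv)(m+n)+\tfrac32 m\,v(m+n-1).
\]
Finally I would invoke the standard identity $(Tw)(k)=-k\,w(k-1)$ (i.e.\ $Y(Tw,z)=\partial_z Y(w,z)$) to turn $(Tu)(m+n)$ into $-(m+n)u(m+n-1)$ and $(Tv)(m+n)$ into $-(m+n)v(m+n-1)$; combining the two terms in each display gives $-n\,u(m+n-1)$ and $(\tfrac12 m-n)v(m+n-1)$, as claimed.

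There is no serious obstacle left at this stage: the only points needing care are the two bookkeeping observations that no modes $\sigma(j)$ with $j\ge 3$ intervene and that the commutator carries no Koszul sign (because $\sigma$ is even). The genuinely substantive facts — in particular $\sigma(2)u=0$ and $\sigma(2)v=0$, which are \emph{not} forced by weight considerations alone — were already established in Lemmas~\ref{lemsigma} and~\ref{lemsigmatau}, so once those are in hand this lemma is immediate.
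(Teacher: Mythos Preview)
Your proof is correct and follows exactly the same approach as the paper: apply the (super) commutator formula using the already-established values $\sigma(j)u$ and $\sigma(j)v$ for $j=0,1,2$ from Lemmas~\ref{lemsigma}, \ref{lemsigmatau}, and \ref{lem:sig0tau}. The paper's version is terser, simply citing these lemmas and the commutator formula, whereas you have helpfully spelled out the weight bookkeeping, the parity remark, and the $(Tw)(k)=-k\,w(k-1)$ identity.
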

\begin{proof}
Using  Lemma~\ref{lemsigma} we have $\sigma(2)u{=}0$, $\sigma(1)u{=}u$ and $\sigma(0)u{=}Tu$ for $u\in \slt$ the result \eqref{eq:sigun} follows from the commutator formula. 
Likewise, \eqref{eq:sigvn} follows from $\sigma(2)v{=}0$, $\sigma(1)v{=}\frac{3}{2}v$ and $\sigma(0)v{=}Tv$ for all $v\in A{\oplus} B$ using Lemmas~\ref{lemsigmatau} and \ref{lem:sig0tau}.
\end{proof}

 \begin{lem}\label{lemvir} $\sigma$ is a Virasoro vector for central charge $c{=}6k$.
 \end{lem}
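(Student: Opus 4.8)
The plan is to verify that $\sigma$ satisfies the Virasoro commutation relations by computing $[\sigma(m),\sigma(n)]$ directly from the structure we have already built up, and identifying both the bracket term and the central term. We already know from Lemma~\ref{lemsigma} that $\sigma(2)\slt=0$, $\sigma(1)u=u$, $\sigma(0)u=Tu$ for $u\in\slt$, and from Lemma~\ref{lemsigmatau} the analogous facts on $A\oplus B$; and Lemma~\ref{lem: siguv} packages these into the commutators $[\sigma(m),u(n)]$ and $[\sigma(m),v(n)]$. The key observation is that $\sigma$ is a weight-$2$ vector, so all its products $\sigma(n)\sigma$ for $n\ge 0$ lie in the finite-dimensional spaces $V_{3-n}$, and these are severely constrained. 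By skew-symmetry $\sigma(1)\sigma$ is, up to a total derivative, symmetric, so $\sigma(1)\sigma\in\CC\sigma+\CC\omega^{U}$-type considerations won't quite work since we don't yet have $\omega$; instead I would argue that $\sigma(1)\sigma\in V_2 = T\slt\oplus\CC\sigma$ and use the $L^U$-grading together with \eqref{eq:Lvn}-type relations to pin it down.

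**First I would** compute $\sigma(n)\sigma$ for $n=0,1,2,3$. For $n=3$: $\sigma(3)\sigma\in V_0=\CC\vac$, so $\sigma(3)\sigma=\frac{c}{2}\vac$ for some scalar which we \emph{define} to be $\frac{c}{2}$ (and we must check $c=6k$). To evaluate it, write $\sigma(3)\sigma$ using $\sigma=\frac12(\tau^+(0)\overline\tau^- - \tau^-(0)\overline\tau^+)$ and expand via super-associativity: $\sigma(3)\sigma = \frac12[\tau^+(0),\overline\tau^-(3)]\sigma - \cdots$, then push $\overline\tau^-(3)$ past $\sigma$ using Lemma~\ref{lem: siguv} (with $v=\overline\tau^-$, noting $\overline\tau^-\in B\subseteq A\oplus B$... wait, here $\overline\tau^-$ is a mode acting, so I'd instead expand $\sigma(3)$ acting on the vector $\sigma$ directly as an iterated product). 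Cleaner: use $[\sigma(m),\sigma(n)] = \sum_{j\ge 0}\binom{m}{j}(\sigma(j)\sigma)(m+n-j)$, so it suffices to know the four vectors $\sigma(j)\sigma$, $j=0,1,2,3$. We get $\sigma(0)\sigma=T\sigma$ (from $\sigma(0)v=Tv$ with $v=\sigma$, which follows since... actually we only proved $\sigma(0)u=Tu$ on $\slt$; but $\sigma(0)\sigma = T\sigma$ also holds by skew-symmetry: $\sigma(0)\sigma = -\sigma(0)\sigma + T(\sigma(1)\sigma)$, so $\sigma(0)\sigma=\frac12 T\sigma(1)\sigma$, and once we show $\sigma(1)\sigma=2\sigma$ this gives $\sigma(0)\sigma=T\sigma$). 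For $\sigma(1)\sigma$: it lies in $V_2$, is $\slt$-invariant under the adjoint action generated by $\sigma(1)$-conjugation... rather, I compute $h(0)(\sigma(1)\sigma)=\sigma(1)(h(0)\sigma)=0$ and $x^\pm(0)(\sigma(1)\sigma)=0$ similarly (since $\sigma$ is $\slt$-invariant and the $\slt$-action commutes appropriately), so $\sigma(1)\sigma$ is an $\slt$-invariant in $V_2$. Now $T\slt$ contains no nonzero $\slt$-invariant (as $\slt$ has no trivial summand), so $\sigma(1)\sigma\in\CC\sigma$. Writing $\sigma(1)\sigma=\lambda\sigma$ and applying the bracket consistency or $L^U(1)$ will force $\lambda=2$.

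**Next I would** compute $\sigma(2)\sigma$ and $\sigma(3)\sigma$. We have $\sigma(2)\sigma\in V_1=\slt$, and again $\slt$-invariance of $\sigma$ under the conjugation $\ad\,\sigma(2)$... more directly: skew-symmetry gives $\sigma(2)\sigma = \sigma(2)\sigma + \text{lower}$, and a parity/weight count plus $[\sigma(m),\sigma(n)]$ antisymmetry forces $\sigma(2)\sigma=0$ (the $(12)$ Jacobi/antisymmetry of $[\sigma(1),\sigma(1)]=0$ gives $\binom{1}{0}(\sigma(0)\sigma)(2)+\binom{1}{1}(\sigma(1)\sigma)(1) = -[(\text{same})]$, and combined with $\sigma(1)\sigma=2\sigma$, $\sigma(0)\sigma=T\sigma$ and the relation $(Tw)(n)=-nw(n-1)$ this is automatically consistent — the real input for $\sigma(2)\sigma=0$ is that it's an $\slt$-invariant in $\slt$, hence zero). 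Then $\sigma(3)\sigma = \beta\vac$; to evaluate $\beta$, apply skew-symmetry $\sigma(3)\sigma = \sum_{j\ge 0}\frac{(-1)^{j+1}}{j!}T^j(\sigma(3+j)\sigma) = -\sigma(3)\sigma$ (higher terms vanish by grading), giving $2\sigma(3)\sigma = -T(\text{nothing})$... that just says $\sigma(3)\sigma$ is $T$-closed which is automatic. So I must compute $\beta$ honestly: expand $\sigma(3)\sigma$ using $\sigma=\frac12(\tau^+(0)\overline\tau^- - \tau^-(0)\overline\tau^+)$ and super-associativity, reducing to the already-known products $\tau^\pm(n)\overline\tau^\mp$ from Lemma~\ref{lem:vcom}; this is a finite bracket computation yielding $\beta = \frac{6k}{2}=3k$, i.e. $c=6k$. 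Finally assemble: $[\sigma(m),\sigma(n)] = (m-n)\sigma(m+n) + \frac{m^3-m}{6}\cdot 2\beta\,\delta_{m+n,0}$... I'll match constants so that the standard Virasoro form $\frac{c}{12}(m^3-m)\delta_{m+n,0}$ emerges with $c=6k$.

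**The main obstacle** I expect is the bookkeeping in the explicit evaluation of $\sigma(3)\sigma$ (equivalently the central charge computation): it requires carefully expanding the iterated products $\tau^{\pm}(0)\overline\tau^{\mp}$ inside $\sigma(3)\sigma$ via super-associativity and Borcherds identities, keeping track of signs and of the $k$-dependent terms coming from $\tau^\pm(2)\overline\tau^\mp=\pm2k\vac$ in Lemma~\ref{lem:vcom}. Everything else (invariance arguments placing $\sigma(j)\sigma$ in the right small space, ruling out $T\slt$ contributions, fixing $\sigma(1)\sigma=2\sigma$) is structural and short. I would also double-check the normalization convention for the Virasoro central term against the paper's conventions in the appendix before finalizing the factor relating $\beta$ to $c$.
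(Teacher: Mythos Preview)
Your overall plan—compute $\sigma(j)\sigma$ for $j=0,1,2,3$ and assemble the commutator—is the right one, and your treatment of $\sigma(0)\sigma$ via skew-symmetry once $\sigma(1)\sigma=2\sigma$ is known is exactly what the paper does. The gap is in your structural shortcut for $\sigma(1)\sigma$ and $\sigma(2)\sigma$.

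You assert that $\sigma(1)\sigma$ is an $\slt$-invariant in $V_2$ and then write ``$V_2=T\slt\oplus\CC\sigma$'', concluding $\sigma(1)\sigma\in\CC\sigma$. But at this point in the argument $V_2=T\slt\oplus\CC\sigma$ is \emph{not} established; it is essentially part of what we are trying to prove. The subalgebra $V$ also contains $\slt(-1)\slt\subseteq V_2$, and this space carries further $\slt$-invariants (e.g.\ the Sugawara-type element $h(-1)h+2x^{+}(-1)x^{-}+2x^{-}(-1)x^{+}$), so $\slt$-invariance alone does not force $\sigma(1)\sigma$ into $\CC\sigma$. The same issue afflicts your argument for $\sigma(2)\sigma=0$: you need $V_1=\slt$, which has not been proved. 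Finally, even granting $\sigma(1)\sigma=\lambda\sigma$, your proposed mechanism for pinning down $\lambda$ (``bracket consistency or $L^U(1)$'') is not made precise; note we do not know $\sigma$ is $L^U$-primary.

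The paper avoids all of this by the direct route you already use for $\sigma(3)\sigma$: write $\sigma=\tfrac12(\tau^{+}(0)\overline\tau^{-}-\tau^{-}(0)\overline\tau^{+})$ and apply the commutator $[\sigma(m),v(n)]=(\tfrac{m}{2}-n)\,v(m{+}n{-}1)$ from Lemma~\ref{lem: siguv} with $v=\tau^{\pm},\overline\tau^{\pm}$. For instance,
\[
\sigma(1)\bigl(\tau^{+}(0)\overline\tau^{-}\bigr)
=[\sigma(1),\tau^{+}(0)]\overline\tau^{-}+\tau^{+}(0)\,\sigma(1)\overline\tau^{-}
=\tfrac12\,\tau^{+}(0)\overline\tau^{-}+\tfrac32\,\tau^{+}(0)\overline\tau^{-},
\]
and similarly for the second summand, giving $\sigma(1)\sigma=2\sigma$ immediately; the computations of $\sigma(2)\sigma=0$ and $\sigma(3)\sigma=3k\vac$ are equally short. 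So the fix is simply to drop the invariance shortcut and run the same explicit computation you planned for $n=3$ at $n=1,2$ as well—no knowledge of $V_1$ or $V_2$ is needed.
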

\begin{proof} We have to check the relations 
\begin{align*}
&\sigma(0)\sigma{=}T\sigma,\quad
\sigma(1)\sigma{=}2\sigma,\quad
\sigma(2)\sigma{=}0,\quad
\sigma(3)\sigma{=}3k\vac.
\end{align*}
Using \eqref{eq:sigvn} we find
\begin{align*}
\sigma(1)\sigma &=\frac{1}{2}\left(\sigma(1)\tau^{+}(0)\overline{\tau}^{-}-\sigma(1)\tau^{-}(0)\overline{\tau}^{+}\right)
\notag
\\
&=\frac{1}{2}\left(\frac{1}{2}\tau^{+}(0)\overline{\tau}^{-}+\tau^{+}(0)\frac{3}{2}\overline{\tau}^{-}
-\frac{1}{2}\tau^{-}(0)\overline{\tau}^{+}-\tau^{-}(0)\frac{3}{2}\overline{\tau}^{+}\right)
%\label{eq:sig1sig}
= 2\sigma,\\
\sigma(2)\sigma &=\frac{1}{2}\left(\sigma(2)\tau^{+}(0)\overline{\tau}^{-}-\sigma(2)\tau^{-}(0)\overline{\tau}^{+}\right)
\notag
\\
&=\frac{1}{2}\left(\tau^{+}(1)\overline{\tau}^{-}+0
-\tau^{-}(1)\overline{\tau}^{+}-0\right)=0,
%\label{eq:sig2sig}
\\
\sigma(3)\sigma &=\frac{1}{2}\left(\sigma(3)\tau^{+}(0)\overline{\tau}^{-}-\sigma(3)\tau^{-}(0)\overline{\tau}^{+}\right)
\notag
\\
&=\frac{1}{2}\left(\frac{3}{2}\tau^{+}(2)\overline{\tau}^{-}+0
-\frac{3}{2}\tau^{-}(2)\overline{\tau}^{+}-0\right)
=3k\vac.
%\label{eq:sig3sig}
\end{align*} 
Lastly, by skew symmetry
\begin{align*}
\sigma(0)\sigma &=-\sigma(0)\sigma+T(\sigma(1)\sigma)
\notag
\\
&=-\sigma(0)\sigma+2T\sigma,
\end{align*}
so that $\sigma(0)\sigma =T\sigma$.
\end{proof}

Thus Theorem~\ref{thmUN=4} holds since all the defining relations for the $N{=}4, c{=}6k$ super conformal algebra $\Nfour$ are satisfied.

\begin{rmk}
If  $U$ is $C_2$--cofinite and of strong CFT type then $k$ is a positive integer by \cite{DM}  since $\slt{\subseteq} U_1$. We also note that the $N{=}4$ Virasoro vector $\sigma$ (of Lemma~\ref{lemsigma}) and $\omega^U$ (the Virasoro element of $U$) can be independent vectors of different central charges. 
Thus Theorem~\ref{thmUN=4} is not a generating theorem for $N{=}4$ algebras (such as in \cite{K} or \cite{dS}) but rather describes the existence of a $N{=}4$ subalgebra  of a given SVOA.
\end{rmk}
\medskip

Finally we note that the automorphism group of  $\Nfour$ contains an involution $g$ defined by
\begin{align*}
g:A\oplus B &\rightarrow B\oplus A\\
\left(\tau^{\pm},\overline{\tau}^{\pm}\right) &\mapsto \left(\overline{\tau}^{\pm},-\tau^{\pm}\right).
\end{align*}
This  follows by directly verifying that the defining relations (a)--(m) are preserved by $g$ by use of super skew--symmetry i.e.  $u(1)v=v(1)u$ and $u(0)v-v(0)u=-T(u(1)v)\in T\slt$ for all $u\in A$ and $v\in B$. Furthermore, Assumption~(IV) of Theorem~\ref{thmUN=4} therefore has the following reformulation
\begin{lem}
\label{lem:AB=BA}
$T\slt\cap\ A(0)B\neq 0 $ if and only if $A(0)B=B(0)A$.\hfill $\Box$
\end{lem}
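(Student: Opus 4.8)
Throughout, recall the two $\slt$-module morphisms out of $A{\otimes}B$ given by $u{\otimes}v\mapsto u(0)v$ and $u{\otimes}v\mapsto u(1)v$: write $\psi$ for the first (image $A(0)B$) and $\phi$ for the second (image $A(1)B{=}\slt$, by (II)), and define $\psi',\phi'$ on $B{\otimes}A$ analogously. The plan is to read the Lemma off the $\slt$-isotypic decompositions $A{\otimes}B{=}\Lambda{\oplus}\Sigma$ and $B{\otimes}A{=}\Lambda'{\oplus}\Sigma'$ of \eqref{Lambdadef}--\eqref{Sigmadef}, in which $\Lambda,\Lambda'$ are trivial and $\Sigma,\Sigma'$ adjoint. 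Since no nonzero $\slt$-map carries a trivial module into the adjoint module $\slt$, we have $\phi(\Lambda){=}0$ and $\phi|_\Sigma\colon\Sigma\xrightarrow{\ \sim\ }\slt$; meanwhile $\psi(\Lambda){=}\CC\sigma$, with $\sigma{\neq}0$ the invariant of Lemma~\ref{lemsigma}, and $\psi(\Sigma)$, being a quotient of the irreducible $\Sigma$, is adjoint-isotypic. Hence $A(0)B{=}\CC\sigma\oplus\psi(\Sigma)$ is its decomposition into trivial- and adjoint-isotypic parts, and likewise $B(0)A{=}\CC\sigma\oplus\psi'(\Sigma')$, the agreement of the two trivial summands being forced by super skew--symmetry, whose correction term (see below) vanishes on $\Lambda$. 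Therefore $A(0)B{=}B(0)A$ \emph{if and only if} $\psi(\Sigma){=}\psi'(\Sigma')$.

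The remaining structural input is the skew--symmetry identity recorded just before the Lemma: for $u{\in}A$, $v{\in}B$ one has $u(0)v-v(0)u{=}-T\bigl(u(1)v\bigr)$, equivalently $\psi'(v{\otimes}u)-\psi(u{\otimes}v){=}T\phi(u{\otimes}v)\in T\slt$. Transporting along the flip $\Sigma{\cong}\Sigma'$, the induced $\slt$-map $\Sigma'\to U$, $v{\otimes}u\mapsto\psi'(v{\otimes}u)-\psi(u{\otimes}v)$, is $T\circ\phi|_\Sigma$, so it maps $\Sigma'$ \emph{onto} $T\slt$; in particular $\psi'(\Sigma')\subseteq\psi(\Sigma)+T\slt$.

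\emph{Forward direction.} Suppose $T\slt\cap A(0)B\neq0$. As an $\slt$-submodule of the irreducible $T\slt$ it must equal $T\slt$, so $T\slt\subseteq A(0)B$; but this is precisely hypothesis (IV) of Theorem~\ref{thmUN=4}, so with (I)--(III) in force $V\cong\Nfour$. Lemma~\ref{lem:tau0tau} then gives $A(0)B{=}T\slt\oplus\CC\sigma{=}V_2$, while the involution $g$ of Section~\ref{SRecog1} interchanges $A$ and $B$ and, being an automorphism (hence grading-preserving), stabilizes $V_2$; thus $B(0)A{=}g\bigl(A(0)B\bigr){=}g(V_2){=}V_2{=}A(0)B$. (One may instead avoid $g$: passing the explicit products of Lemma~\ref{lem:tau0tau} through skew--symmetry gives $T\slt\subseteq B(0)A$ as well, and then the two inclusions $B(0)A\subseteq A(0)B{+}T\slt$ and $A(0)B\subseteq B(0)A{+}T\slt$ — again from skew--symmetry — force the equality.)

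\emph{Converse.} I prove the contrapositive: assume $T\slt\cap A(0)B{=}0$. Then $T\slt\cap\psi(\Sigma){=}0$, so $\psi(\Sigma)\oplus T\slt$ is an internal direct sum inside $V_2$ containing both $\psi(\Sigma)$ and $\psi'(\Sigma')$. The projection of this space onto the summand $T\slt$ kills $\psi(\Sigma)$ but, by the previous paragraph, carries $\psi'(\Sigma')$ onto $T\slt\neq0$; hence $\psi(\Sigma)\neq\psi'(\Sigma')$, and so $A(0)B\neq B(0)A$ by the equivalence established in the first paragraph. I expect that first paragraph to carry the real content — recognizing that the desired equality of subspaces of $V_2$ collapses, through the trivial/adjoint isotypic splitting, to the single equation $\psi(\Sigma){=}\psi'(\Sigma')$, and then that skew--symmetry produces $\psi'(\Sigma')$ from $\psi(\Sigma)$ by tilting it through a full copy of $T\slt$. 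Once that is in place the forward implication follows from Theorem~\ref{thmUN=4} and the known $\Nfour$ relations, and the converse is the short projection argument above.
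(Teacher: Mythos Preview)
Your argument is correct, but more elaborate than what the paper intends. The paper offers no proof at all (just $\Box$); the lemma is meant to drop out of the two skew--symmetry identities recorded immediately before it, $u(1)v{=}v(1)u$ and $u(0)v{-}v(0)u{=}{-}T(u(1)v)$ for $u{\in}A$, $v{\in}B$, together with the existence of the involution $g$.

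For the converse the one-line argument is: if $A(0)B{=}B(0)A$ then every difference $u(0)v{-}v(0)u$ lies in $A(0)B$, and since $A(1)B{=}\slt$ these differences span $T\slt$; hence $T\slt\subseteq A(0)B$. Your contrapositive via the projection onto $T\slt$ reaches the same place but through more scaffolding.

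For the forward direction you invoke Theorem~\ref{thmUN=4} and the automorphism $g$, which is precisely what the paper's ``therefore'' is pointing at; your parenthetical alternative via Lemma~\ref{lem:tau0tau} and the two containments $A(0)B\subseteq B(0)A{+}T\slt$, $B(0)A\subseteq A(0)B{+}T\slt$ is equally valid and arguably cleaner. The isotypic decomposition you set up in the first paragraph is correct --- in particular your observation that the trivial summands $\CC\sigma$ on both sides coincide because $\phi|_\Lambda{=}0$ is the right reason, and the reduction $A(0)B{=}B(0)A\iff\psi(\Sigma){=}\psi'(\Sigma')$ is legitimate since isotypic components are uniquely determined --- but none of that structure is strictly needed here.
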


%%%%%%%%%%%%%%%%%%%%%%%%%%%

\section{Proof of Theorem \ref{thmVLV}}\label{SRecog2}

In this Subsection we assume the hypotheses and notation of Theorem \ref{thmVLV}, in particular 
$V$ is contained in a super lattice VOA $V_L$ (see Subsection~\ref{VL} for the definition and relevant properties).\
In particular, $(V_L)_1$ is a reductive Lie algebra and each of its  components is a simple Lie algebra of type
$ADE$ and of level $1$.

\medskip
Now by hypothesis (IV) of Theorem \ref{thmVLV} our Lie algebra $\slt$ contains a root of $L$.\ It follows that
$\slt$ is contained in one of the components of $V_1$ and therefore it also has level $k{=}1$.\ Adopting the notation of the previous Section, it follows that $(h, h){=}2$ (cf. Lemma \ref{lemKM}).\ Thus $h$ is a root of
$L$ and we have $\slt{=}\langle h, e^{\pm h}\rangle$.

\medskip
We will deduce Theorem \ref{thmVLV} from Theorem \ref{thmUN=4}.\ To this end, notice that states of weight $\frac{3}{2}$ in $V_L$ are primary because
$L$ has no vectors of norm $1$.\ Thus it suffices to take $U{:=}V_L$ in Theorem \ref{thmUN=4} and show that hypotheses (I)--(IV) of Theorem \ref{thmUN=4} hold.\ Then Theorem \ref{thmUN=4} shows that  $V$ is the $N{=}4$ super conformal algebra of central charge $6k{=}6$. \ Parts (I) and (II) hold by assumption, so we only have to establish (III) and (IV).

\medskip
We need some additional notation.\ Let $(\ ,\ ){:}L{\times}L{\rightarrow}\ZZ$ be the bilinear form on $L$.\ Note that
this is \emph{not} the notation used in the proof of Lemma \ref{lemKM}, where $(\ ,\ )$ denoted the invariant bilinear form on $\slt$.\ 

\medskip
The vectors in $L$ of norm $n$ are denoted by
\begin{align*}
L_n{:=}\{\alpha{\in}L{\mid}(\alpha, \alpha){=}n\},
\end{align*}
in particular $L_2$ is the root system of $L$.\ Fix a multiplicative bicharacter
$\varepsilon{:}L{\times}L{\rightarrow}\{\pm 1\}$ that defines the central extension $\widehat{L}$ occurring in the short exact sequence
\begin{align*}
1\rightarrow \{\pm 1\}\rightarrow \widehat{L}\rightarrow L \rightarrow 0.
\end{align*}
See Subsection \ref{SSepsilon} for more details on the $\varepsilon$-formalism, in particular
for the justification that
\begin{eqnarray}\label{epsvalue}
\varepsilon(\alpha, \alpha)=\varepsilon(\alpha, -\alpha){=}  \left \{ \begin{array}{ll}
{-}1 & \alpha{\in}L_2 \\
\; \; \, 1 & \alpha{\in}L_3{\cup}L_4
\end{array} \right.
\end{eqnarray}

\medskip
Recall (\ref{ABnames}) that $\tau^+, \overline{\tau}^+$ are highest weight vectors in $A$ and $B$ respectively.\ We have already mentioned that $U_{\frac{3}{2}}$ is spanned by states $e^{\beta}\ (\beta{\in}L_3)$. Thus there are
nonempty subsets $X, Y{\subseteq}L_3$ and scalars $c_{\alpha}, d_{\lambda}$ such that
\begin{align}\label{tauvec}
\tau^+{:=}\sum_{\alpha\in X}c_{\alpha}e^{\alpha},\ \ \overline{\tau}^+{:=}\sum_{\lambda\in Y}d_{\lambda}e^{\lambda}.
\end{align}
Because
$h(0)\tau^+{=}\tau^+$ then we have $(h, \alpha){=}1\ (\alpha{\in}X)$, and similarly
$(h, \lambda){=}1\ (\lambda{\in}Y)$.

\medskip
As a result, we have the following useful facts that hold for
$\alpha, \beta{\in}X$.\ $|(\alpha, \beta)|{\leq}3$ by the Schwarz inequality, moreover
\begin{align*}
(\alpha, \beta){=} 
\begin{cases}
{\pm}3 &\mbox{iff}\  \alpha{=}\pm\beta \\
\;\;\,2 &\mbox{iff}\ \alpha{-}\beta{=}\gamma\  (\mbox{root}\ \gamma\perp h)\\
{-}2 &\mbox{iff}\ \alpha{+}\beta{=}h \\
{-}1 &\mbox{iff}\ \alpha{+}\beta{=}h{+}\gamma\ (\mbox{root}\ \gamma\perp h)
\end{cases}
\end{align*}
Identical formulas hold in case $\alpha, \beta{\in}Y$.\
We use these formulas in later calculations.

\medskip
The elements $x^{\pm}{\in}\slt$
may be identified (recall that $\varepsilon(h,h)=-1$ by (\ref{epsvalue})) as 
 $x^{\pm}=\mp e^{\pm h}$.\
Because {$x^-(0)\tau^+{=}\tau^-$} we have
\begin{align*}
\tau^-{=}\sum_{\alpha\in X} c_{\alpha}e^{{-}h}(0)e^{\alpha}{=}\sum_{\alpha\in X} c_{\alpha}\varepsilon(h, \alpha)e^{\alpha-h},
\end{align*}
and similarly
\begin{align*}
\overline{\tau}^-{=}\sum_{\lambda\in Y} d_{\lambda}\varepsilon(h, \lambda)e^{\lambda-h}.
\end{align*}

We now consider the consequences of assumption (III) of Theorem~\ref{thmVLV}. 
\begin{lem}\label{lemA(1)A} $A(1)A=0$ implies
\begin{align*}
&(a)\quad \sum_{\alpha\in X} c_{\alpha}c_{h+\gamma-\alpha}\varepsilon(\alpha, h{+}\gamma){=}0\ \ (\mbox{each root
$\gamma\perp h$}),\\
&(b)\quad \sum_{\alpha\in X} c_{\alpha}c_{h-\alpha}\varepsilon(h, \alpha)\alpha{=}0,
\end{align*}
with a corresponding statement concerning $B(1)B=0$.
\end{lem}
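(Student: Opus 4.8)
The strategy is to compute $A(1)A = \langle \tau^+(1)\tau^+,\ \tau^+(1)\tau^-,\ \tau^-(1)\tau^- \rangle$ (plus the remaining products, which are determined by $\slt$-symmetry or vanish) directly in the lattice model, using the explicit vertex operator formula for products $e^\alpha(n)e^\beta$ in $V_L$. Recall from the $\varepsilon$-formalism (Subsection~\ref{SSepsilon}) that $Y(e^\alpha, z)e^\beta = \varepsilon(\alpha,\beta)\, z^{(\alpha,\beta)} E^-(\alpha,z) e^{\alpha+\beta}$ where $E^-(\alpha,z) = \exp\!\big(\sum_{k\geq 1} \tfrac{\alpha(-k)}{k} z^k\big)$, so that $e^\alpha(n)e^\beta = 0$ unless $n \leq -(\alpha,\beta) - 1$, and the leading (top-weight) terms are easy to read off. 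Since $\tau^\pm \in U_{3/2}$ and $A(1)A \subseteq U_2$, we only need the coefficient of $z^{-2}$ in $Y(\tau^+, z)\tau^+$, etc.

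First I would write $\tau^+(1)\tau^+ = \sum_{\alpha,\beta \in X} c_\alpha c_\beta\, e^\alpha(1)e^\beta$ and note that the only pairs contributing are those with $(\alpha,\beta) = -2$, i.e. $\alpha + \beta = h$ (using the classification of inner products on $X$ established just above the lemma); for such a pair $e^\alpha(1)e^\beta = \varepsilon(\alpha,\beta)\, e^{\alpha+\beta} = \varepsilon(\alpha, h-\alpha)\, e^h$ up to the $E^-$-correction, which at this weight contributes nothing beyond the bare $e^h$ since $\alpha+\beta = h$ has norm $2$ and $e^h$ already sits in weight $1$, so the weight-$2$ part of $e^\alpha(1)e^\beta$ is exactly $\varepsilon(\alpha,h-\alpha)\, h(-1)e^h$ or a multiple of $x^+ = -e^h$ — I need to be careful here and also collect the $e^\alpha(0)$-type pieces. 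Setting the total coefficient of the relevant basis vector to zero gives a relation; rewriting $\varepsilon(\alpha, h-\alpha) = \varepsilon(\alpha, h)\varepsilon(\alpha, -\alpha) = \varepsilon(h,\alpha)\cdot(-1)$ using bimultiplicativity and (\ref{epsvalue}) (since $\alpha \in L_3$ gives $\varepsilon(\alpha,-\alpha) = 1$, wait — $\alpha \in X \subseteq L_3$, so actually $\varepsilon(\alpha,-\alpha)=1$; one must track signs carefully here) yields (b) after accounting for the $\alpha$ versus $h-\alpha$ symmetry producing the weight vector $\alpha$. For (a), the relevant products are $\tau^+(1)\tau^-$ (or $\tau^-(1)\tau^-$ shifted), where now $\alpha + (\lambda - h) = \gamma$ with $\gamma \perp h$ a root, i.e. pairs with $\alpha + \beta = h + \gamma$ and $(\alpha,\beta) = -1$, giving $e^\alpha(0)e^\beta = \varepsilon(\alpha,\beta) e^{\alpha+\beta} = \varepsilon(\alpha, h+\gamma-\alpha) e^{h+\gamma}$ in weight $\tfrac32 + \tfrac32 - 1 \cdot \ldots$ — again I set the coefficient of $e^{h+\gamma}$ to zero.

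The main obstacle I anticipate is bookkeeping: correctly identifying \emph{which} mode $e^\alpha(n)$ lands a product in weight $2$ (it depends on $(\alpha,\beta)$, which varies over the cases), correctly expanding the $E^-(\alpha,z)$ factor to the right order so as not to miss contributions (e.g. $e^\alpha(-2)e^\beta$ when $(\alpha,\beta)=1$ contributes $\alpha(-1)e^{\alpha+\beta}$ terms, and $e^\alpha(-1)e^\beta$ when $(\alpha,\beta)=0$ contributes to weight $2$ as well), and then collecting everything into a basis of $U_2$ and reading off the independent linear relations. Once the $\varepsilon$-identities $\varepsilon(\alpha,\beta)\varepsilon(\beta,\alpha) = (-1)^{(\alpha,\beta)}$ and (\ref{epsvalue}) are applied consistently, the stated forms (a) and (b) should drop out, and the $B(1)B=0$ statement follows by replacing $X, c_\alpha$ with $Y, d_\lambda$ verbatim since the computation is formally identical.
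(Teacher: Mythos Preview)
There is a genuine gap. First, a weight miscount: for $u,v\in U_{3/2}$ one has $u(1)v\in U_{3/2+3/2-1-1}=U_1$, not $U_2$; this is why your computation immediately finds $e^h\in U_1$ and then ties itself in knots looking for a ``weight-$2$ part'' that does not exist. More seriously, your plan to extract condition~(b) from $\tau^+(1)\tau^+$ cannot succeed. In that product only pairs with $(\alpha,\beta)=-2$ contribute, and for those $e^{\alpha}(1)e^{\beta}=\varepsilon(\alpha,\beta)e^{\alpha+\beta}=\varepsilon(\alpha,h-\alpha)e^h$ is a scalar multiple of the single vector $e^h$. So $\tau^+(1)\tau^+=0$ gives one scalar equation (in fact the $(h,\cdot)$-component of (b)), not the vector identity $\sum_{\alpha}c_{\alpha}c_{h-\alpha}\varepsilon(h,\alpha)\alpha=0$ in $\mathfrak h$.

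The paper instead computes a single product, $\tau^+(1)\tau^-=\sum_{\alpha,\beta\in X}c_{\alpha}c_{\beta}\varepsilon(h,\beta)\,e^{\alpha}(1)e^{\beta-h}$, and both conditions fall out at once. Here $(\alpha,\beta-h)=(\alpha,\beta)-1$, so two cases occur: when $(\alpha,\beta)=-2$ (i.e.\ $\alpha+\beta=h$) one has $(\alpha,\beta-h)=-3$ and $e^{\alpha}(1)e^{\beta-h}=\varepsilon(\alpha,\beta-h)\,\alpha(-1)\vac$, producing the \emph{vectors} $\alpha\in\mathfrak h$ whose vanishing sum is exactly (b); when $(\alpha,\beta)=-1$ (i.e.\ $\alpha+\beta=h+\gamma$ for a root $\gamma\perp h$) one has $(\alpha,\beta-h)=-2$ and $e^{\alpha}(1)e^{\beta-h}=\varepsilon(\alpha,\beta-h)\,e^{\gamma}$, whose vanishing for each $\gamma$ is (a). The $\varepsilon$-manipulations you describe then put everything in the stated form. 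So the fix is simply to compute $\tau^+(1)\tau^-$ rather than $\tau^+(1)\tau^+$, and to keep in mind that the target is $U_1$.
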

\begin{proof}
$A(1)A$ contains the element $\tau^+(1)\tau^-$, which is equal to
\begin{align*}
&\sum_{\alpha, \beta\in X} c_{\alpha}c_{\beta}\varepsilon(h, \beta)e^{\alpha}(1)e^{\beta-h}\;{=}
\sum_{(\alpha, \beta-h)=-2, -3} c_{\alpha}c_{\beta}\varepsilon(h, \beta)e^{\alpha}(1)e^{\beta-h}\\
&=\sum_{\alpha+\beta=h} c_{\alpha}c_{\beta}\varepsilon(h, \beta)\alpha
+\sum_{(\alpha, \beta)=-1} c_{\alpha}c_{\beta}\varepsilon(h, \beta)\varepsilon(\alpha, \beta{-}h)e^{\alpha+\beta-h}\\
&=\sum_{\alpha+\beta=h} c_{\alpha}c_{\beta}\varepsilon(h, \beta)\alpha
+\sum_{\gamma\perp h}\left\{\sum_{\alpha+\beta=h+\gamma} c_{\alpha}c_{\beta}\varepsilon(h, \beta)\varepsilon(\alpha, \gamma{-}\alpha)\right\}e^{\gamma}\\
&=\sum_{\alpha+\beta=h} c_{\alpha}c_{\beta}\varepsilon(h, \beta)\alpha
-\sum_{\gamma\perp h}\sum_{\alpha+\beta=h+\gamma} c_{\alpha}c_{\beta}\varepsilon(h,\gamma{+}\alpha)\varepsilon(\alpha, \gamma)e^{\gamma}\\
&=\sum_{\alpha+\beta=h} c_{\alpha}c_{\beta}\varepsilon(h, \beta)\alpha
-\sum_{\gamma\perp h}\varepsilon(h, \gamma)\sum_{\alpha+\beta=h+\gamma} c_{\alpha}c_{\beta}\varepsilon(h,\alpha)\varepsilon(\alpha, \gamma)e^{\gamma}\\
&=-\sum_{\alpha} c_{\alpha}c_{h-\alpha}\varepsilon(h, \alpha)\alpha
+\sum_{\gamma\perp h}\varepsilon(h, \gamma)\sum_{\alpha} c_{\alpha}c_{h+\gamma-\alpha}\varepsilon(\alpha,h{+}\gamma)e^{\gamma},
\end{align*}
using $\varepsilon(h, \beta){=}\varepsilon(h, h{-}\alpha){=}{-}\varepsilon(h, \alpha)$.
Hence $A(1)A{=}0$ if and only if
\begin{align*}
&\sum_{\alpha\in X} c_{\alpha}c_{h+\gamma-\alpha}\varepsilon(\alpha, h{+}\gamma){=}0,
\end{align*}
for each root $\gamma\perp h$, and
\begin{align*}
\sum_{\alpha} c_{\alpha}c_{h-\alpha}\varepsilon(h, \alpha)\alpha=0.
\end{align*}
\ A similar analysis applies for $B(1)B=0$.
\end{proof}

\begin{lem} We have $A(0)A{=}B(0)B{=}0$.
\end{lem}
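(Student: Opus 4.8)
The plan is to show that $A(0)A=0$ as a consequence of the already-established relation $A(1)A=0$ (Lemma \ref{lemA(1)A}) together with the conformal grading and the module structure. First I would observe that $A(0)A\subseteq V_2$, and that under the $\slt$-morphism $A\otimes A\to A(0)A$ the domain decomposes as $\Lambda^2(A)\oplus S^2(A)$, i.e. into a trivial module and an adjoint module. The image of $S^2(A)$ lands in the adjoint part of $V_2$, namely $T\slt$, while the image of $\Lambda^2(A)$ is an $\slt$-invariant in $V_2$. To kill the $S^2(A)$-piece, note that $\tau^+(0)\tau^+\in T\slt$ would be a highest weight vector for $h(0)$ of weight $2$ inside $T\slt\cong\slt$, but $T\slt$ has no such vector (its $h(0)$-weights are $0,\pm 2$ but the weight-$2$ space is $\CC Tx^+$, and one checks $\tau^+(0)\tau^+$ cannot equal a nonzero multiple of $Tx^+$ using $L^U(1)$: applying \eqref{eq:Lvn}, $L^U(1)(\tau^+(0)\tau^+)=\tau^+(1)\tau^+=0$ by Lemma \ref{lem:u1v} after identifying $\tau^+(1)\tau^+$ appropriately, whereas $L^U(1)Tx^+=L^U(1)L^U(-1)x^+=2x^+\neq 0$). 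Hence $S^2(A)$ maps to $0$, so $A(0)A$ is at most one-dimensional, spanned by the image of $\Lambda^2(A)$.

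Next I would pin down the remaining invariant. Since $A(1)A=0$, super skew-symmetry gives $\tau^+(0)\tau^- = \tau^-(0)\tau^+ - T(\tau^-(1)\tau^+) = \tau^-(0)\tau^+$, so the would-be generator $\tau^+(0)\tau^- - \tau^-(0)\tau^+$ of the image of $\Lambda^2(A)$ is already visibly zero — that is, $\Lambda^2(A)$ maps to $0$ as well. Combining the two paragraphs, $A(0)A=0$; the argument for $B(0)B=0$ is identical, using $B(1)B=0$.

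The main obstacle is the careful bookkeeping at the $S^2(A)$ step: one must be sure that the only adjoint-type submodule of $V_2$ into which $A(0)A$ can map is $T\slt$ (this uses that $U$ is of CFT type, so $V_2$ in the relevant range is $T\slt\oplus\CC\omega^U\oplus(\text{possible extra primaries})$, and an adjoint $\slt$-module of primaries of weight $2$ would have to be detected — but here we are inside the subalgebra $V$ generated by $A\oplus B$, and the weight-$2$ subspace $V_2$ of $V$ is spanned by $A(0)B$, $B(0)A$, $A(0)A$, $B(0)B$), and then that $L^U(1)$ separates $Tx^+$ from a highest-weight vector of the wrong grading. Once the $L^U(1)$-computation is in hand — which is immediate from \eqref{eq:Lvn} and Lemma \ref{lem:u1v} — the rest is formal.

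\begin{rmk}
The relation $A(0)A=0$ together with the earlier lemmas shows that $A$ (and likewise $B$) generates an abelian subalgebra in the sense that all nonnegative products of states in $A$ among themselves vanish, matching the corresponding relations $A(n)A=0\ (n\geq 0)$ recorded in Section \ref{SN=4} for $\Nfour$.
\end{rmk}
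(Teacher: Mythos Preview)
Your handling of the $\Lambda^2(A)$-piece is correct and matches the paper: super skew-symmetry together with $A(1)A=0$ (and $T^2\vac=0$) forces $\tau^+(0)\tau^-=\tau^-(0)\tau^+$, so the map $A\otimes A\to A(0)A$ factors through $S^2(A)$.

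The gap is in the $S^2(A)$-step, and it is exactly the obstacle you flag but do not resolve. You need $\tau^+(0)\tau^+$ to land in $T\slt$, and you try to enforce this with $L^U(1)$. The computation $L^U(1)(\tau^+(0)\tau^+)=\tau^+(1)\tau^+=0$ is correct, but it only shows $\tau^+(0)\tau^+$ lies in $\ker L^U(1)\cap (V_L)_2$. In the lattice theory this kernel is far larger than $\CC Tx^+$: every $e^\delta$ with $\delta\in L_4$ is a primary weight-$2$ state, so $L^U(1)e^\delta=0$; and for $u\in\frakh$ with $(u,h)=0$ one has $L^U(1)\bigl(u(-1)e^h\bigr)=(u,h)e^h=0$ as well. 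Both kinds of states have $h(0)$-eigenvalue $2$, so they sit in the same $\slt$-weight space as $Tx^+$ and are not separated from it by your $L^U(1)$ test. These are not hypothetical contributions: the explicit expansion of $\tau^+(0)\tau^+$ in $V_L$ consists precisely of terms $e^{\alpha+\beta}$ with $\alpha+\beta=h+\gamma\in L_4$ (roots $\gamma\perp h$) and terms $\alpha(-1)e^h$ with $\alpha\in X\subseteq L_3$. Restricting attention to the subalgebra $V$ generated by $A\oplus B$ does not help, since $A(0)A$ is computed in the ambient $V_L$ and you are trying to determine it, not presuppose its shape.

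The paper's proof is necessarily lattice-specific at this point. It writes out $\tau^+(0)\tau^+$ as above and then kills both families of terms using the scalar identities (a) and (b) of Lemma~\ref{lemA(1)A}, which are exactly the lattice translation of the hypothesis $A(1)A=0$. In other words, the implication $A(1)A=0\Rightarrow A(0)A=0$ here is a statement about the coefficients $c_\alpha$ and the $\varepsilon$-cocycle, not an abstract SVOA fact; this is why in Theorem~\ref{thmUN=4} the vanishing $A(0)A=0$ is taken as a hypothesis rather than deduced.
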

\begin{proof} We prove that $A(0)A{=}0$.\ The proof that $B(0)B{=}0$ is similar.\ Assume, then, that
$A(0)A{\not=0}$.\ Because $A(1)A{=}0$, by assumption~(III),
then $A(0)A$ has dimension ${\leq} 3$ by super skew-symmetry, indeed because we are assuming that $A(0)A{\not=}0$
then the image of $A{\otimes}A{\rightarrow}A(0)A$ is the adjoint module for $\slt$.\
Now it follows that $0{\not=}\tau^+(0)\tau^+{\in}A(0)A$.
But we also have
\begin{align*}
&\tau^+(0)\tau^+{=}\sum_{\alpha, \beta\in X}c_{\alpha}c_{\beta}e^{\alpha}(0)e^{\beta}
{=}\sum_{(\alpha, \beta)=-1}c_{\alpha}c_{\beta}\varepsilon(\alpha, \beta)e^{\alpha{+}\beta}
{+}\sum_{(\alpha, \beta)=-2}c_{\alpha}c_{\beta}\varepsilon(\alpha, \beta)\alpha(-1)e^{\alpha+\beta}\\
{=}&\sum_{\gamma}\left\{\sum_{\alpha+\beta=h+\gamma}c_{\alpha}c_{\beta}\varepsilon(\alpha, h{+}\gamma)\right\}
e^{h+\gamma}
{+}\left\{\sum_{\alpha+\beta=h}c_{\alpha}c_{\beta}\varepsilon(\alpha, h)\alpha\right\}(-1)e^{h}{=}0,
\end{align*}
where we used Lemma \ref{lemA(1)A}.\ This contradiction completes the proof of
the Lemma.
\end{proof}
\noindent This establishes assumption (III) of Theorem \ref{thmUN=4}.
\medskip

We now consider consequences of  assumptions~(II)  and (IV) of Theorem~\ref{thmVLV}.
\begin{lem}\label{lemP} $A(1)B\cong \slt$  with $\slt=\langle h,e^{\pm h}\rangle $ if and only if
\begin{align*}
&(a)\quad h=-\sum_{\alpha\in X}c_{\alpha}d_{h-\alpha}\varepsilon(h, \alpha)\alpha,
\\
&(b)\quad  \sum_{\gamma\perp h}\varepsilon(h, \gamma)\left\{\sum_{\alpha\in X} c_{\alpha}d_{h+\gamma-\alpha}
\varepsilon(\alpha, h{+}\gamma)\right\}e^{\gamma}=0,
\end{align*}
where the $\gamma$ sum is taken  over each root $\gamma\perp h$.
\end{lem}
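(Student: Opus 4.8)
The plan is to compute $\tau^+(1)\overline{\tau}^-$ explicitly using the lattice vertex operators and the $\varepsilon$-formalism, exactly as in the proof of Lemma~\ref{lemA(1)A}, and to match the result against $h$. Recall $\tau^+=\sum_{\alpha\in X}c_\alpha e^\alpha$ and $\overline{\tau}^-=\sum_{\lambda\in Y}d_\lambda\varepsilon(h,\lambda)e^{\lambda-h}$, so that
\[
\tau^+(1)\overline{\tau}^-=\sum_{\alpha\in X,\lambda\in Y}c_\alpha d_\lambda\varepsilon(h,\lambda)\,e^\alpha(1)e^{\lambda-h}.
\]
Since $(h,\alpha)=(h,\lambda)=1$, the pairing $(\alpha,\lambda-h)$ takes the values $-3,-2,-1$; the value $-3$ (i.e.\ $\alpha=h-\lambda$, impossible since both lie in $L_3$ with $(\alpha,\lambda-h)=-3$ forces $\alpha=-(\lambda-h)$, a norm-$3$ vector, which cannot happen with $\lambda\in Y\subseteq L_3$ and $(h,\lambda)=1$ unless $\alpha+\lambda=h$, handled in the $-2$ case) does not occur, so only the two surviving cases contribute. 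The $(\alpha,\lambda-h)=-2$ term, i.e.\ $\alpha+\lambda=h$, contributes a Cartan-type summand $\sum_{\alpha+\lambda=h}c_\alpha d_\lambda\varepsilon(h,\lambda)\varepsilon(\alpha,\lambda-h)\,\alpha$; the $(\alpha,\lambda-h)=-1$ term, i.e.\ $\alpha+\lambda=h+\gamma$ for a root $\gamma\perp h$, contributes $\sum_{\gamma\perp h}\big(\sum_{\alpha+\lambda=h+\gamma}c_\alpha d_\lambda\varepsilon(h,\lambda)\varepsilon(\alpha,\lambda-h)\big)e^\gamma$.

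Next I would simplify the $\varepsilon$-cocycle factors just as in Lemma~\ref{lemA(1)A}: using $\varepsilon(h,\lambda)=\varepsilon(h,h-\alpha)=-\varepsilon(h,\alpha)$ for $\lambda=h-\alpha$, and the bimultiplicativity $\varepsilon(\alpha,\lambda-h)=\varepsilon(\alpha,-\alpha)\,(\text{correction})$ in the Cartan term, and $\varepsilon(\alpha,\lambda-h)=\varepsilon(\alpha,\gamma-\alpha)=\varepsilon(\alpha,\gamma)$ together with $\varepsilon(h,\lambda)=\varepsilon(h,h+\gamma-\alpha)=\varepsilon(h,\gamma)\varepsilon(h,-\alpha)=\varepsilon(h,\gamma)\varepsilon(h,\alpha)$ (since $\varepsilon(h,h)=-1$ needs care but the parity works out as in the earlier lemma) in the $e^\gamma$ term. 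After collecting signs, the Cartan summand becomes a nonzero multiple of $\sum_{\alpha\in X}c_\alpha d_{h-\alpha}\varepsilon(h,\alpha)\,\alpha$, and the root summand becomes $\sum_{\gamma\perp h}\varepsilon(h,\gamma)\big(\sum_{\alpha\in X}c_\alpha d_{h+\gamma-\alpha}\varepsilon(\alpha,h+\gamma)\big)e^\gamma$. Then $\tau^+(1)\overline{\tau}^-\in V_1$; and since we already know (from the $\slt$-module morphism $A\otimes B\to A(1)B$ in the proof of the Lemma establishing \eqref{hdef}) that $\tau^+(1)\overline{\tau}^-=h$ up to the chosen normalization, the element of $V_1$ computed above must equal $h$. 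Because $h$ and the $e^\gamma$ ($\gamma\perp h$) are linearly independent in $(V_L)_1$, equating components gives precisely (a) the Cartan coefficient equals $h$ (after absorbing the universal nonzero scalar, which forces the sign so that $h=-\sum_\alpha c_\alpha d_{h-\alpha}\varepsilon(h,\alpha)\alpha$) and (b) every $e^\gamma$-coefficient vanishes. Conversely, if (a) and (b) hold then the same computation shows $\tau^+(1)\overline{\tau}^-=h$, and applying $x^\pm(0)$ and $h(0)$ (which act within $A(1)B$ and are determined by the $\slt$-structure carried over from $A\oplus B$) generates all of $\langle h,e^{\pm h}\rangle\cong\slt$, giving $A(1)B=\slt$.

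The main obstacle I expect is bookkeeping with the bicharacter $\varepsilon$: keeping the signs $\varepsilon(h,h)=-1$ (from \eqref{epsvalue}), the bimultiplicativity, and the relation $\varepsilon(\alpha,\beta)\varepsilon(\beta,\alpha)=(-1)^{(\alpha,\beta)}$ all consistent so that the stray overall scalar relating the computed Cartan term to $h$ is pinned down exactly, yielding the minus sign in (a). A secondary point requiring a small argument is that only the pairings $(\alpha,\lambda-h)\in\{-1,-2\}$ actually occur — one must rule out $(\alpha,\lambda-h)=-3$, which would force $\alpha=h-\lambda$ with $(\alpha,\alpha)=3$, i.e.\ $(h-\lambda,h-\lambda)=(h,h)-2(h,\lambda)+(\lambda,\lambda)=(h,h)-2+3$; since $h\in L_2$ this is $3$, so in fact this case can be subsumed into $\alpha+\lambda=h$ only if $\gamma=0$, and one checks it contributes to the Cartan term rather than a spurious extra piece — but the cleanest route is simply to note $(\alpha,\lambda-h)=-3$ iff $\alpha=-(\lambda-h)$, which with $(h,\alpha)=1$ gives $1=-(h,\lambda)+(h,h)=-1+(h,h)$, forcing $(h,h)=2$ consistently and $\alpha=h-\lambda$, i.e.\ this is the boundary of the $-2$ stratum and is already accounted for. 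Modulo this care with $\varepsilon$, the proof is a direct expansion-and-compare argument parallel to Lemma~\ref{lemA(1)A}.
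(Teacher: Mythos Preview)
Your overall strategy matches the paper's: expand $\tau^+(1)\overline{\tau}^-$ via the lattice vertex-operator formulas, separate into a Cartan piece and a sum over roots $\gamma\perp h$, and compare with $h$. However, your case analysis for $(\alpha,\lambda-h)$ is shifted by one. Since $e^{\gamma}(1)e^{\rho}=0$ whenever $(\gamma,\rho)\geq -1$, the only contributing values are $(\alpha,\lambda-h)\in\{-3,-2\}$, not $\{-2,-1\}$. The value $-3$ \emph{does} occur (precisely when $\lambda=h-\alpha$, i.e.\ $\alpha+\lambda=h$), and it is this case that yields the Cartan term, because $e^{\alpha}(1)e^{-\alpha}=\varepsilon(\alpha,-\alpha)\alpha=\alpha$ for $\alpha\in L_3$. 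The $-2$ case (equivalently $(\alpha,\lambda)=-1$, so $\alpha+\lambda=h+\gamma$ for a root $\gamma\perp h$) produces the $e^{\gamma}$ terms. Your attempt to rule out $-3$ is therefore misguided, and the conditions you attach to the labels $-2$ and $-1$ belong to $-3$ and $-2$ respectively; the explicit summands you write down are in fact the correct ones, so the error is in the annotation rather than the output, but as written the computation does not parse.

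For the converse you argue by applying the $\slt$-action to $\tau^+(1)\overline{\tau}^-=h$; in the setting of Section~\ref{SRecog2} (where $A$ and $B$ are already vector modules for $\langle h,e^{\pm h}\rangle$) this is legitimate and a bit slicker than what the paper does. The paper instead pairs condition~(a) with $h$ to extract the scalar identity $\sum_{\alpha\in X}c_{\alpha}d_{h-\alpha}\varepsilon(h,\alpha)=-2$, and then performs a second explicit lattice expansion to show $\tau^-(1)\overline{\tau}^+=h$ directly, appealing to $\slt$-symmetry only for $\tau^{\pm}(1)\overline{\tau}^{\pm}$. That scalar identity is not incidental: it is exactly what is used in the next lemma to compute $\tau^+(0)\overline{\tau}^+=Te^{-h}$ and thereby verify hypothesis~(IV) of Theorem~\ref{thmUN=4}, so even with your streamlined converse you should still record it.
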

\begin{proof} By calculations similar to those of Lemma \ref{lemA(1)A}
we have
\begin{align*}
\tau^+(1)\overline{\tau}^{-}=&
\sum_{\alpha\in X, \lambda\in Y} c_{\alpha}d_{\lambda}\varepsilon(h, \lambda)e^{\alpha}(1)e^{\lambda-h}\; {=}
\sum_{(\alpha, \lambda-h)=-2, -3} c_{\alpha}d_{\lambda}\varepsilon(h, \lambda) e^{\alpha}(1)e^{\lambda-h}
\\
=&-\sum_{\alpha\in X} c_{\alpha}d_{h-\alpha}\varepsilon(h, \alpha)\alpha
+\sum_{\gamma\perp h}\varepsilon(h, \gamma)\sum_{\alpha\in X} c_{\alpha}d_{h+\gamma-\alpha}\varepsilon(\alpha,h{+}\gamma)e^{\gamma}.
\end{align*}
But from  \eqref{hdef}  we have that $h=\tau^{+}(1)\overline{\tau}^{-}$ iff (a) and (b) hold. Note that taking the inner product of (a) with $h$ implies 
\begin{align}
\sum_{\alpha\in X}c_{\alpha}d_{h-\alpha}\varepsilon(h, \alpha)=-2.
\label{eq:2sum}
\end{align}
This implies 
\begin{align*}
\tau^{-}(1)\overline{\tau}^{+}=&\sum_{\alpha\in X, \lambda\in Y} c_{\alpha}d_{\lambda}\varepsilon(h,\alpha)e^{\alpha-h}(1)e^{\lambda}\; {=}
\sum_{(\alpha-h, \lambda)=-2, -3} c_{\alpha}d_{\lambda} \varepsilon(h,\alpha)e^{\alpha-h}(1)e^{\lambda}\\
=&\sum_{\alpha\in X} c_{\alpha}d_{h-\alpha} \varepsilon(h,\alpha)(\alpha-h)
+\sum_{\gamma\perp h}\varepsilon(h, \gamma)\sum_{\alpha\in X} c_{\alpha}d_{h+\gamma-\alpha}\varepsilon(\alpha,h{+}\gamma)e^{\gamma}\\
=& -h+2h+0=h,
\end{align*} 
 iff (a) and  (b) hold. The remaining relations  in  \eqref{hdef} follow from $\slt$ symmetry.
\end{proof}

\begin{lem} $A(1)B\cong \slt$ implies $T\slt\cap A(0)B{\not=}0$.
\end{lem}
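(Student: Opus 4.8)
The plan is to compute $\tau^+(0)\overline{\tau}^-$ explicitly in the lattice coordinates, extract the part supported on the sublattice $\ZZ h$, and show it is a nonzero multiple of $Th = h(-1)e^0$, which lies in $T\slt$. First I would carry out the lattice-theoretic expansion of $\tau^+(0)\overline{\tau}^- = \sum_{\alpha\in X,\lambda\in Y} c_\alpha d_\lambda\,\varepsilon(h,\lambda)\,e^\alpha(0)e^{\lambda-h}$, splitting the double sum according to the value of $(\alpha,\lambda-h)\in\{-2,-3\}$, exactly as in the proofs of Lemmas \ref{lemA(1)A} and \ref{lemP}. The $(\alpha,\lambda-h)=-3$ terms (i.e.\ $\lambda-h=-\alpha$, equivalently $\lambda=h-\alpha$, forcing $\alpha+\beta=h$ in the earlier notation) contribute a term of the form $\big(\sum_\alpha c_\alpha d_{h-\alpha}\varepsilon(h,\alpha)\cdots\big)\,\alpha(-1)e^{0}$, which is a combination of vectors $\alpha(-1)e^0$; the remaining $(\alpha,\lambda-h)=-2$ terms contribute vectors $e^{\gamma}$ with $\gamma$ a root perpendicular to $h$ together with vectors $\mu(-1)e^0$.

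Next I would use the hypothesis $A(1)B\cong\slt$ together with Lemma \ref{lemP} to control the coefficients. The key input is \eqref{eq:2sum}: $\sum_{\alpha\in X} c_\alpha d_{h-\alpha}\varepsilon(h,\alpha) = -2 \neq 0$. Combined with $(h,\alpha)=1$ for all $\alpha\in X$, the $\lambda=h-\alpha$ part of $\tau^+(0)\overline{\tau}^-$ is exactly $\frac12\big(\sum_\alpha c_\alpha d_{h-\alpha}\varepsilon(h,\alpha)\big) h(-1)e^0$ up to the universal normalization, i.e.\ a nonzero multiple of $Th$ — provided the cross terms ($e^\gamma$ pieces and any stray $\mu(-1)e^0$ with $\mu\not\parallel h$) either vanish or can be separated off. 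Here I would invoke part (b) of Lemma \ref{lemP}, which says precisely that the $e^\gamma$ coefficients for roots $\gamma\perp h$ vanish, and the root/inner-product dichotomy listed just before Lemma \ref{lemA(1)A} to pin down which $(\alpha,\lambda)$ pairs occur. One cleaner alternative: compute instead $\tau^+(0)\overline{\tau}^- + \tau^-(0)\overline{\tau}^+$, which the $g$-involution symmetry and super skew-symmetry force to lie in $T\slt$, and show it is nonzero by the same coefficient count; or, even more simply, apply $L^U(1)$ to $\tau^+(0)\overline{\tau}^-$ and use \eqref{eq:Lvn} to get $\tau^+(1)\overline{\tau}^- = h \neq 0$, so $\tau^+(0)\overline{\tau}^-$ is itself nonzero, and then argue its component along $Th$ survives — but this needs the structural fact that $A(0)B\cap T\slt$ contains the $\slt$-invariant generated by $\Lambda$.

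The main obstacle I anticipate is bookkeeping the bicharacter signs $\varepsilon$ correctly through the mode computation $e^\alpha(0)e^{\lambda-h}$, in particular handling the cocycle relation $\varepsilon(\alpha,\lambda-h) = \varepsilon(\alpha,\lambda)\varepsilon(\alpha,h)^{-1}$ and the values from \eqref{epsvalue}, so that the surviving $\ZZ h$-component is manifestly a \emph{nonzero} scalar times $Th$ rather than accidentally cancelling. The conceptually safest route is the third one above: we already know $\sigma := \tfrac12(\tau^+(0)\overline{\tau}^- - \tau^-(0)\overline{\tau}^+)$ is a nonzero $\slt$-invariant in $A(0)B\subseteq V_2$ by the $\Lambda$-part of the $\slt$-morphism $A\otimes B\to A(0)B$ (this is exactly the argument of Lemma \ref{lemsigma}, which only used (I), (II)); hence $\tau^+(0)\overline{\tau}^-$ and $\tau^-(0)\overline{\tau}^+$ cannot both lie outside $T\slt$ unless their difference does, so it remains to check that in $V_L$ the space $(V_2 = V_L{}_2)$ decomposes so that the only $\slt$-invariant line not meeting the ``$e^\gamma$'' part is $T\slt$ — which follows because $V_{\frac{3}{2}}$, hence $A(0)B$, is spanned by vectors of the two shapes $\mu(-1)e^0$ and $e^{\gamma}$ ($\gamma\perp h$ a root), and the $e^\gamma$'s span a sum of \emph{non-trivial} $\slt$-modules when $\gamma\not\perp$ all of $\slt$ while the trivial summands live in $\langle\mu(-1)e^0\rangle\cong V_L{}_1$-part, whose only adjoint-type $\slt$-submodule is $T\slt = h(-1)e^0$-line plus the $e^{\pm h}$-directions — and $\sigma$, being $\slt$-invariant, must then be a multiple of $Th$. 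Concluding, $0\neq\sigma\in T\slt\cap A(0)B$, which is the assertion; this also matches Lemma \ref{lem:AB=BA}.
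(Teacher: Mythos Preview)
Your ``conceptually safest route'' contains a genuine error: you conclude that $\sigma$, being $\slt$-invariant, must be a multiple of $Th$ and hence lie in $T\slt$. But $T\slt$ is a copy of the \emph{adjoint} module (since $T$ commutes with the $\slt$-action), not a trivial one; in particular $Th$ is \emph{not} $\slt$-invariant (it has $h(0)$-weight~$0$ but is moved by $x^{\pm}(0)$). So an $\slt$-invariant like $\sigma$ has no reason to lie in $T\slt$, and indeed in the completed picture it does not: $\sigma$ is the Virasoro vector, complementary to $T\slt$ inside $V_2$. Thus this line of argument cannot close. (A smaller side issue: Lemma~\ref{lemsigma} uses Assumption~(III) via Lemma~\ref{lem:u1v}, not only (I)--(II); this is available here since (III) was just established, but your parenthetical is inaccurate.)

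Your second alternative is also mis-stated: super skew-symmetry gives $\tau^{+}(0)\overline{\tau}^{-} + \overline{\tau}^{-}(0)\tau^{+} = T(\tau^{+}(1)\overline{\tau}^{-}) = Th$, which involves $\overline{\tau}^{-}(0)\tau^{+}\in B(0)A$, not $\tau^{-}(0)\overline{\tau}^{+}\in A(0)B$; and you cannot invoke the involution~$g$, which is an automorphism of $\Nfour$ only once the identification is made.

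The paper avoids all of this by computing $\tau^{+}(0)\overline{\tau}^{+}$ rather than $\tau^{+}(0)\overline{\tau}^{-}$. This is the $h(0)$-weight~$2$ vector in the image of $\Sigma\to A(0)B$, so only lattice pairs with $(\alpha,\lambda)\in\{-1,-2\}$ occur (no $-3$ term, hence no quadratic $\alpha(-1)^2$ pieces). The $(\alpha,\lambda)=-1$ contributions vanish by Lemma~\ref{lemP}(b), and the $(\alpha,\lambda)=-2$ contributions (forcing $\alpha+\lambda=h$) assemble, via Lemma~\ref{lemP}(a) and~\eqref{eq:2sum}, to $h(-1)e^{h}=Te^{h}\in T\slt$, which is nonzero. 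Your first approach via $\tau^{+}(0)\overline{\tau}^{-}$ could in principle be pushed through by computing $\tau^{+}(0)\overline{\tau}^{-}+\tau^{-}(0)\overline{\tau}^{+}$ explicitly in lattice coordinates and checking all the $\alpha(-1)^2$ and $\alpha(-2)$ terms cancel to leave $Th$, but that is substantially messier than the paper's one-line calculation at the highest weight.
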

\begin{proof} $A(0)B$ contains the element 
\begin{align*}
\tau^+(0)\overline{\tau}^+&{=}\sum_{(\alpha, \lambda)=-1, -2}c_{\alpha}d_{\lambda}e^{\alpha}(0)e^{\lambda}\\
&{=}-\frac{1}{2}\left\{\sum_{\alpha\in X}c_{\alpha}d_{h-\alpha}\varepsilon(h, \alpha)\right\}h(-1)e^h{=}h(-1)e^h=Te^{-h},
\end{align*}
by \eqref{eq:2sum} of Lemma \ref{lemP}.\
Thus $T\slt\cap A(0)B{\not=}0$.
\end{proof}

\medskip
This completes the proof of hypothesis(IV) of Theorem \ref{thmUN=4}, and with it the proof of Theorem \ref{thmVLV}.

%%%%%%%%%%%%%%%%%%%%%%%%%%%%%%%%%%%%%%

\section{$N{=}2$ superconformal algebras}\label{SN=2}
The $N{=}2$ SVOA $\Ntwo$ of central charge $c$  is
generated by a pair of states $\tau^{\pm}$ of conformal weight $\frac{3}{2} $ satisfying the non-zero relations e.g. \cite{K}
\begin{align*} 
&\mbox{(i)}\; && \tau^{\pm}(2)\tau^{\mp}{=}\frac{c}{3}\vac,\quad &
&\mbox{(ii)}\; && \tau^{\pm}(1)\tau^{\mp}{=}\pm h, \quad &
&\mbox{(iii)}\; && \tau^{\pm}(0)\tau^{\mp}{=}\omega{\pm} \frac{1}{2}T,&
\\
&\mbox{(iv)}\; && h(0)\tau^{\pm}{=}\pm\tau^{\pm},\quad &
&\mbox{(v)}\; && h(1)h{=}\frac{c}{3}\vac,& &
\end{align*}
together with the standard Virasoro relations between $\tau^{\pm},h$ and the 
Virasoro vector $\omega$ of central charge $c$. 
Similarly to Theorem~\ref{thmUN=4} we have
\begin{thm}\label{thmUN=2} 
Let $U$ be a SVOA of CFT--type \ 
Let  $V{\subseteq} U$ be the subalgebra generated by 2 \emph{primary vectors} $\tau^{\pm}$ of weight $\frac{3}{2} $ in $U$, so that 
\begin{align*}
V{=}\CC\mathbf{1} {\oplus} V_{\half} {\oplus} V_1{\oplus} V_{\frac{3}{2} } {\oplus}\hdots
\end{align*}
is a conformally graded subspace of $U$.\ Assume that:
\begin{enumerate}
	\item[(I)] $h(0)\tau^{\pm}{=}\pm \tau^{\pm}$ where $h{:=}\tau^{+}(1)\tau^{-}$,
	\item[(II)] $\tau^{\pm}(0)\tau^{\pm}{=}0$.
\end{enumerate}
Then $V\cong \Ntwo$ with central charge $c{=}6k$ where $h(1)h=2k\vac$.
\end{thm}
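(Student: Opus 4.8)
The plan is to run the same machine developed for Theorem~\ref{thmUN=4}, but in the simpler $N{=}2$ setting, and to mine assumptions (I) and (II) for exactly the relations (i)--(v) above. As before I would first fix notation: set $A{:=}\langle\tau^{+},\tau^{-}\rangle$ and $h{:=}\tau^{+}(1)\tau^{-}$ as in (I), so that $\tau^{\pm}$ are weight vectors for $h(0)$ of weights $\pm 1$. The first observation is that $A$ is primary by hypothesis, so the commutator formula \eqref{eq:Lvn} holds for $v{\in}A$; consequently Lemma~\ref{lem:AnpA} applies and, together with (II), yields $\tau^{\pm}(n)\tau^{\pm}{=}0$ for $n{\geq}0$ by $\slt$-symmetry (the argument that $\tau^+(0)\tau^+{=}0$ forces $h(0)\tau^+(0)\tau^+{=}2\tau^+(0)\tau^+$, so the image lands in a weight space that an $\slt$-invariance argument kills — but here even more directly we are simply given (II)). I would next check that $h$ generates a Heisenberg/$\slt$-type structure: from $h(0)\tau^{\pm}{=}{\pm}\tau^{\pm}$ and skew-symmetry one gets $\tau^+(1)\tau^-{=}\tau^-(1)\tau^+{=}h$ up to the defining equation, and $h(1)h{=}2k\vac$ defines $k$; the relation $h(0)h{=}0$ follows from skew-symmetry, giving (iv) and (v) with $c{=}6k$.

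The heart of the argument is producing the Virasoro vector and relations (i)--(iii). I would define $\omega{:=}\tau^+(0)\tau^-{-}\frac{1}{2}Th$ (the analogue of $\sigma$ in Lemma~\ref{lemsigma}, adjusted so that $\tau^+(0)\tau^-{=}\omega{+}\frac{1}{2}Th$), and verify in turn: $h(1)\tau^{\pm}{=}0$ — this is the $N{=}2$ analogue of Lemma~\ref{lem:u1v}, proved by super associativity and skew-symmetry exactly as there, using $\tau^{+}(1)\tau^{+}{=}0$ from (II); then the computation of $\tau^{\pm}(2)\tau^{\mp}$ via the bracket $[h(1),\tau^{\pm}(n)]{=}{\pm}\tau^{\pm}(n{+}1)$ applied to $h{=}\tau^{\pm}(1)\tau^{\mp}$, giving $\tau^{\pm}(2)\tau^{\mp}{=}{\pm}2k\vac$, i.e. (i) with $c{=}6k$; then $\omega(1)v$, $\omega(2)v$ on $A$ and on $h$, mirroring Lemmas~\ref{lemsigma}, \ref{lemsigmatau}, \ref{lem:sig0tau}, to establish that $\omega$ acts as the conformal grading operator with the right weights $1$ on $h$ and $\frac{3}{2}$ on $\tau^{\pm}$; and finally the Virasoro self-relations $\omega(1)\omega{=}2\omega$, $\omega(2)\omega{=}0$, $\omega(3)\omega{=}\frac{c}{6}\vac$ and $\omega(0)\omega{=}T\omega$ as in Lemma~\ref{lemvir}, which pins down the central charge as $c{=}6k$ and gives relation (iii).

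One simplification over the $N{=}2$-from-$N{=}4$ situation is that there is no $A{\oplus}B$ splitting and no level-determination subtlety from assumption (IV): the role of "$T\slt\cap A(0)B\neq 0$" is automatic here because $\tau^+(0)\tau^-$ lands in $V_2$ and its $Th$-component is governed directly by skew-symmetry, $\tau^+(0)\tau^-{-}\tau^-(0)\tau^+{=}{-}T(\tau^+(1)\tau^-){=}{-}Th$, so the "invariant" part $\omega$ is forced to exist without an extra hypothesis. The main obstacle, as in the proof of Theorem~\ref{thmUN=4}, will be the bookkeeping in the super associativity/skew-symmetry identities needed to show $h(1)\tau^{\pm}{=}0$ and to compute the various $\omega(n)$-actions — these are routine but must be done carefully with the correct signs in the super Jacobi and skew-symmetry formulas. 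Once those are in hand, all of (i)--(v) plus the Virasoro relations are verified, so $V\cong\Ntwo$ with $c{=}6k$, completing the proof.
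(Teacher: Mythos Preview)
Your proposal follows essentially the same approach as the paper's proof: define the Virasoro candidate $\sigma$ (your $\omega$) as $\tau^{+}(0)\tau^{-}-\tfrac{1}{2}Th$, establish $h(1)\tau^{\pm}=0$ by the super associativity/skew-symmetry argument of Lemma~\ref{lem:u1v}, compute $\tau^{\pm}(2)\tau^{\mp}$, and then verify the action of $\sigma$ on $h$ and $\tau^{\pm}$ and the Virasoro self-relations as in Lemmas~\ref{lemsigma}--\ref{lemvir}. Your observation that assumption~(IV) of Theorem~\ref{thmUN=4} is unnecessary here, because skew-symmetry already forces the $Th$-component of $\tau^{+}(0)\tau^{-}$, is exactly the point. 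A few signs and constants in your sketch need correcting in the write-up: super skew-symmetry for odd states gives $\tau^{-}(1)\tau^{+}=-h$ (not $+h$), hence $\tau^{+}(0)\tau^{-}-\tau^{-}(0)\tau^{+}=+Th$; both $\tau^{+}(2)\tau^{-}$ and $\tau^{-}(2)\tau^{+}$ equal $2k\vac$ (no alternating sign, unlike the $N{=}4$ case); and the Virasoro relation is $\omega(3)\omega=\tfrac{c}{2}\vac$, not $\tfrac{c}{6}\vac$. None of these affects the strategy.
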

\begin{proof}
We sketch the proof, which is similar in many respects to that for Theorem~\ref{thmUN=4}.
We firstly note that $u(1)v{=}{-}v(1)u$ for all $u,v \in \langle\tau^{\pm} \rangle$ by super skew-symmetry. 
Thus Assumptions~(I) and (II) above imply the properties~(ii), (iv) and (v) for $c{=}6k$.\
As for Lemma~\ref{lem:u1v}, we find $h(1)\tau^{\pm}=0$ which in turn implies (as in Lemma~\ref{lem:tau2tau}) that $\tau^{\pm}(2)\tau^{\mp}{=}2k\vac $.\ Thus property~(i) holds.

\medskip
Using super skew-symmetry we have 
\begin{align*}
\tau^{+}(0)\tau^{-}{=}\tau^{-}(0)\tau^{+}{-}T(\tau^{-}(1)\tau^{+}){=}\tau^{-}(0)\tau^{+}{+}Th.
\end{align*}
Thus, in this case, we define
\begin{align}
\sigma{:=}\frac{1}{2}\left(\tau^{+}(0)\tau^{-}{+}\tau^{-}(0)\tau^{+}\right),
\label{eq:N2Vir}
\end{align}
so that $\tau^{\pm}(0)\tau^{\mp}{=}\sigma{\pm} \frac{1}{2}Th$. 
Hence
\begin{align}
[\tau^{+}(m), \tau^{-}(n)] &=\sigma(m+n) {+}\frac{1}{2}(m-n)h(m{+}n{-}1)
{+}m(m{-}1)k\delta_{m{+}n{+}1,0}.
\label{eq:taupm}
\end{align}

It remains to show that $\sigma$ is a Virasoro vector of central charge $c{=}6k$. \
As in Lemma~\ref{lemsigma}, we find $\sigma(0)h{=}Th$, $\sigma(1)h{=}h$ and $\sigma(2)h{=}0$. 
\eqref{eq:taupm} implies
\begin{align*}
\sigma(0){=}[\tau^{+}(0),\tau^{-}(0)],\quad 
\sigma(1){=}[\tau^{+}(0),\tau^{-}(1)]{+}\frac{1}{2}h(0),\quad
\sigma(2){=}[\tau^{+}(0),\tau^{-}(2)]+ h(1),
\end{align*}  
from which it follows that $\sigma(0)\tau^{+}=T\tau^{+}$, $\sigma(1)\tau^{+}=\frac{3}{2}\tau^{+}$  and $\sigma(2)\tau^{+}=0$  (cf. Lemmas~\ref{lemsigmatau} and \ref{lem:sig0tau}). Similar results follow for $\sigma(n)\tau^{-}$ for $n=0,1,2$. Hence (cf.  Lemma~\ref{lem: siguv})
\begin{align*}
[\sigma(m),\tau^{\pm}(n)] {=}\left(\frac{1}{2}m{-}n\right)\tau^{\pm}(m{+}n{-}1),
\end{align*} 
which implies that $\sigma$ is a Virasoro vector of central charge $6k$ (cf. Lemma~\ref{lemvir}).
\end{proof}

\section{Examples}\label{SExample}
 We provide some  constructions of $N{=}4$ and $N{=}2$ subalgebras of a lattice SVOA $V_L$ for an odd lattice $L$. 
These examples illustrate Theorems \ref{thmUN=4}, \ref{thmVLV} and \ref{thmUN=2}. 
Throughout, we let $L_n$ denote the set of lattice vectors in $L$ of norm $n$.

\medskip

\subsection{Example 1.} Consider the lattice SVOA $V_L$ for $L{=}\ZZ^6$ -- the well-known rank $12$ free fermion construction.\ Let $L{=}{\ZZ^6}$ be generated by $\gamma_1,\cdots,\gamma_6\in L_{1}$ with $(\gamma_i,\gamma_j){=}\delta_{ij}$.\ Then $V_{L}$ is generated by $12$ weight $\frac{1}{2}$  fermion vectors $e^{\pm \gamma_i}$.

\medskip
We firstly note from Section~\ref{SSepsilon} that 
\begin{align*}
\varepsilon(\gamma_i,\gamma_j)=
\begin{cases}
-\varepsilon(\gamma_j,\gamma_i)  &\mbox{ for } i\neq j,\\
-1  &\mbox{ for } i= j.
\end{cases}
\end{align*}
In addition, for convenience, we choose $\varepsilon(\gamma_1,\gamma_2)=1$ so that  
$\varepsilon(\gamma_2,\gamma_1)=-1$.

 \medskip
Define $\slt$ generators $h{:=}\gamma_1{+}\gamma_2,\, x^{\pm}{:=}\mp e^{\pm h}\in (V_{\ZZ^6})_{1}$ where 
$h(1)h{=}2\vac$ i.e. $k{=}1$ in \eqref{invform}.
Then $\langle e^{\gamma_1},e^{-\gamma_2}\rangle$  and 
$\langle e^{\gamma_2},e^{-\gamma_1}\rangle$ form a pair of $\slt$--representations, where using Subsection~\ref{VL}, we find
\begin{align}
h(0)e^{\pm \gamma_1}& =\pm e^{\pm \gamma_1},\quad  h(0)e^{\pm \gamma_2} =\pm e^{\pm \gamma_2},
\notag
\\
x^{\pm}(0)e^{\mp\gamma_1}&=\mp e^{\pm\gamma_2},\quad  x^{\pm}(0)e^{\mp\gamma_2}=\pm e^{\pm\gamma_1}.
\label{xzerogam}
\end{align}
Define $a,\abar,b,\bbar \in (V_{\ZZ^6})_{1}$ by
\begin{align*}
a&=\frac{1}{\sqrt{2}}(\gamma_3{+}i\gamma_4),\quad 
\abar=\frac{1}{\sqrt{2}}(\gamma_3{-}i\gamma_4),\\
b&=\frac{1}{\sqrt{2}}(\gamma_5{+}i\gamma_6),\quad 
\bbar=\frac{1}{\sqrt{2}}(\gamma_5{-}i\gamma_6),
\end{align*}
which satisfy non-zero relations $a(1)\abar{=}b(1)\bbar{=}\vac$.\ 
Lastly, define $\tau^{\pm},\overline{\tau}^{\pm}$ by 
\begin{align*}
\tau^{+}&=a(-1)e^{\gamma_1}+b(-1)e^{\gamma_2},\quad
\tau^{-}= a(-1)e^{-\gamma_2}-b(-1)e^{-\gamma_1},\\
\overline{\tau}^{+}&=\abar(-1)e^{\gamma_2} -\bbar(-1)e^{\gamma_1},\quad
\overline{\tau}^{-}=-\abar(-1)e^{-\gamma_1}-\bbar(-1)e^{-\gamma_2}.
\end{align*}

We now show that the sub-SVOA generated by $\tau^{\pm}$, $\overline{\tau}^{\pm}$ is isomorphic to  $ \Nfour$ for central charge $c{=}6$ by use of Theorem~\ref{thmUN=4}.\
$\tau^{\pm}$, $\overline{\tau}^{\pm}$ are clearly primary vectors of weight $\frac{3}{2} $. 
It is straightforward to confirm \eqref{xpmtau} and \eqref{xpmtau0} by using \eqref{xzerogam}. Thus Axiom~(I)  of Theorem~\ref{thmUN=4} holds.

\medskip
In order to confirm Axioms~(II)--(IV) of Theorem~\ref{thmUN=4}  we  note, using superassociativity, that for all 
$u,v{\in} \{a,b\}$ and $\lambda,\nu{\in}\{\pm \gamma_1,\pm\gamma_2\}$  
\begin{align}
&\left(u(-1)e^{\lambda}\right)(1)v(-1) e^{\nu}
=\dlangle  u,v\drangle  e^{\lambda}(-1)e^{\nu},
\label{vu1}
\\
&\left(u(-1)e^{\lambda}\right)(0)v(-1) e^{\nu}
=u(-1)v(-1)e^{\lambda}(0)e^{\nu}+\dlangle  u,v\drangle  e^{\lambda}(-2)e^{\nu},
\label{vu0}
\end{align}
where $u(1)v{=}\dlangle  u,v\drangle \vac$. 
\eqref{vu1} and Subsection~\ref{VL} imply  that
\begin{align*}
&\tau^{+}(1)\overline{\tau}^{-}=-\varepsilon(\gamma_1,-\gamma_1)\gamma_1(-1)\vac-\varepsilon(\gamma_2,-\gamma_2)\gamma_2(-1)\vac=h,
\\
&\tau^{-}(1)\overline{\tau}^{+}=\varepsilon(-\gamma_2,\gamma_2)(-\gamma_2)(-1)\vac+\varepsilon(-\gamma_1,\gamma_1)(-\gamma_1)(-1)\vac=h.
\end{align*}
Therefore $\tau^{+}(1)\overline{\tau}^{+}{=}{-}2x^{+}$ and $\tau^{-}(1)\overline{\tau}^{-}{=}2x^{-}$ using  
$x^{\pm}(0)h{=}\mp 2x^{\pm}$ and hence $A(1)B\cong \slt$ i.e. Axiom~(II) holds. 
Axiom~(III) follows  from 
\begin{align*}
\tau^{+}(0)\tau^{-}{=}{-}\varepsilon(-\gamma_1,-\gamma_1)a(-1)b(-1)\vac {+}
\varepsilon(\gamma_2,-\gamma_2)b(-1)a(-1)\vac{=}0.
\end{align*}
using \eqref{vu0}. By skew-symmetry $\tau^{-}(0)\tau^{+}{=}\tau^{+}(0)\tau^{-}{=}0$ and so $A(0)A{=}0$ using $\slt$ symmetry. A similar argument applies showing that $B(0)B{=}0$.\ Lastly 
\begin{align*}
&\tau^{+}(0)\overline{\tau}^{+}{=}{-}(\gamma_1{+}\gamma_2)(-1)e^{\gamma_1{+}\gamma_2}{=}{-}Tx^{+},
\end{align*}
so  that Axiom~(IV) holds. Hence the SVOA generated by $\tau^{\pm}$, $\overline{\tau}^{\pm}$ is isomorphic to $ \Nfour$ for central charge $c{=}6k=6$ by Theorem~\ref{thmUN=4}.

\medskip
 To finish, we show that $\sigma{=}\omega$, the standard $V_{L}$  Virasoro vector of central charge $6$. 
From \eqref{vu0} we find
\begin{align*}
\sigma&=\frac{1}{2}\left(\tau^+(0)\overline{\tau}^-{-}\tau^-(0)\overline{\tau}^+\right)
\\
&=-\frac{1}{2}\Bigg(\varepsilon(\gamma_1,-\gamma_1)a(-1)\abar {+} e^{\gamma_1}(-2)e^{-\gamma_1}
+\varepsilon(\gamma_2,-\gamma_2)b(-1)\bbar {+} e^{\gamma_2}(-2)e^{-\gamma_2}
\\
&\quad {+} \varepsilon(-\gamma_2,\gamma_2)a(-1)\abar {+} e^{-\gamma_2}(-2)e^{\gamma_2}
{+} \varepsilon(-\gamma_1,\gamma_1)b(-1)\bbar {+} e^{-\gamma_1}(-2)e^{\gamma_1}
\Bigg)
\\
&=a(-1)\abar{+}b(-1)\bbar{+}\frac{1}{2}\left(\gamma_1(-1)\gamma_1{+}\gamma_2(-1)\gamma_2\right) 
=\frac{1}{2}\sum_{i=1}^{6}\gamma_i(-1)\gamma_i=\omega.
\end{align*}
Thus we  conclude that 
\begin{prop}  $V_{\ZZ^6}$ contains an $N{=}4$ superconformal subalgebra  with the standard lattice Virasoro vector for central charge $c{=}6$.
\end{prop}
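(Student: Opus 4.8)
The plan is to build the $N{=}4$ algebra explicitly inside $V_{\ZZ^6}$ and then quote Theorem~\ref{thmUN=4}, rather than re-deriving the structure from scratch. First I would fix the $\slt$-triple: take $h{:=}\gamma_1{+}\gamma_2$, so that $h$ has norm $2$ and $x^{\pm}{:=}\mp e^{\pm h}\in (V_{\ZZ^6})_1$ generate a copy of $\slt$ at level $k{=}1$ (equivalently $h(1)h{=}2\vac$, cf. \eqref{invform}). The pairs $\langle e^{\gamma_1},e^{-\gamma_2}\rangle$ and $\langle e^{\gamma_2},e^{-\gamma_1}\rangle$ are $\slt$-doublets by the mode action \eqref{xzerogam}, and I would combine them with the ``transverse'' weight-one vectors $a,\abar,b,\bbar$ assembled from $\gamma_3,\dots,\gamma_6$ to define the four candidate weight-$\tfrac32$ generators $\tau^{\pm},\overline{\tau}^{\pm}$ as displayed above. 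Each is manifestly primary of weight $\tfrac32$, and the $\slt$-action relations \eqref{xpmtau}, \eqref{xpmtau0} follow at once from \eqref{xzerogam}; this is hypothesis~(I).

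The engine for hypotheses (II)--(IV) is the pair of product formulas \eqref{vu1} and \eqref{vu0}, which reduce any bracket of the form $\bigl(u(-1)e^{\lambda}\bigr)(n)\bigl(v(-1)e^{\nu}\bigr)$ with $u,v\in\{a,b\}$ to a lattice product $e^{\lambda}(n)e^{\nu}$ weighted by the scalar $\dlangle u,v\drangle$ (and here $\dlangle a,a\drangle{=}\dlangle b,b\drangle{=}\dlangle a,b\drangle{=}0$, $\dlangle a,\abar\drangle{=}\dlangle b,\bbar\drangle{=}1$). With these in hand: (II) follows from $\tau^{+}(1)\overline{\tau}^{-}{=}h$ together with its $\slt$-conjugate $\tau^{-}(1)\overline{\tau}^{+}{=}h$, which forces $\tau^{\pm}(1)\overline{\tau}^{\pm}{=}\mp 2x^{\pm}$ and hence $A(1)B\cong\slt$; (III) follows from $\tau^{+}(0)\tau^{-}{=}0$, then $\tau^{-}(0)\tau^{+}{=}0$ by skew-symmetry and $A(0)A{=}B(0)B{=}0$ by $\slt$-symmetry; (IV) follows from $\tau^{+}(0)\overline{\tau}^{+}{=}{-}Tx^{+}\in T\slt\cap A(0)B$. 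Theorem~\ref{thmUN=4} then yields $V\cong\Nfour$ with $c{=}6k{=}6$.

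It remains only to identify the resulting $N{=}4$ conformal vector with the canonical one. By Lemma~\ref{lemsigma} the Virasoro vector produced by the recognition theorem is $\sigma{=}\tfrac12\bigl(\tau^{+}(0)\overline{\tau}^{-}{-}\tau^{-}(0)\overline{\tau}^{+}\bigr)$. Expanding each of the four terms with \eqref{vu0}, the ``$e^{\lambda}(-2)e^{\nu}$'' pieces collect into $\tfrac12\bigl(\gamma_1(-1)\gamma_1{+}\gamma_2(-1)\gamma_2\bigr)$ while the ``$u(-1)v(-1)$'' pieces collect into $a(-1)\abar{+}b(-1)\bbar{=}\tfrac12\sum_{i=3}^{6}\gamma_i(-1)\gamma_i$; summing gives $\sigma{=}\tfrac12\sum_{i=1}^{6}\gamma_i(-1)\gamma_i{=}\omega$, the standard lattice Virasoro element of central charge $6$. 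This establishes the Proposition.

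The only genuine difficulty is bookkeeping with the cocycle $\varepsilon$: one must fix the conventions $\varepsilon(\gamma_i,\gamma_i){=}-1$, $\varepsilon(\gamma_i,\gamma_j){=}-\varepsilon(\gamma_j,\gamma_i)$ for $i\neq j$, and the normalization $\varepsilon(\gamma_1,\gamma_2){=}1$, and then carry these signs faithfully through every evaluation of $e^{\lambda}(n)e^{\nu}$ and through each appeal to $\slt$- or skew-symmetry. A single dropped sign would, for instance, make $\tau^{+}(1)\overline{\tau}^{-}$ land on the wrong multiple of $h$, or spoil the cancellation that makes $\sigma$ equal $\omega$ exactly rather than $\omega$ plus a transverse correction; beyond that sign-chasing the argument is entirely mechanical.
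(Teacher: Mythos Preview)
Your proposal is correct and follows exactly the same route as the paper: fix the $\slt$-triple $h,x^{\pm}$, define $\tau^{\pm},\overline{\tau}^{\pm}$ via the transverse vectors $a,\abar,b,\bbar$, verify hypotheses (I)--(IV) of Theorem~\ref{thmUN=4} using the product identities \eqref{vu1}--\eqref{vu0}, and then compute $\sigma$ directly to see it equals the standard lattice Virasoro $\omega$. Even the specific checks you single out (e.g.\ $\tau^{+}(1)\overline{\tau}^{-}{=}h$, $\tau^{+}(0)\tau^{-}{=}0$, $\tau^{+}(0)\overline{\tau}^{+}{=}{-}Tx^{+}$) and your remark about the cocycle bookkeeping mirror the paper's argument step for step.
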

\medskip

\subsection{Example 2.}\ 
Let $\alpha_1, \hdots, \alpha_6$ be an orthogonal basis for $\RR^6$ consisting of
vectors of norm $3$, and
let $L$ be the lattice spanned by the $\alpha_i$ together with
\begin{align*}
h{:=}\frac{1}{3}(\alpha_1{+}\hdots{+}\alpha_6){\in} L_{2}. 
\end{align*}
Then $L$ is an odd, positive-definite, integral lattice with \emph{theta function} 
\begin{align*}
\theta_L(\tau){=}1{+}2q{+}24q^{\frac{3}{2} }{+}\hdots
\end{align*}
In particular, there are no vectors of norm $1$, and $\pm h$ are the only roots.
\begin{prop}\label{thmalt} $V_L$ contains an $N{=}4$ superconformal subalgebra $A$ such that the Virasoro vector of $A$
is the standard lattice Virasoro vector of central charge $c{=}6$.
\end{prop}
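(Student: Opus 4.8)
The plan is to deduce the proposition from Theorem~\ref{thmVLV}: I will exhibit four explicit weight-$\tfrac32$ vectors of $V_L$ satisfying hypotheses (I)--(IV) of that theorem, and then separately check that the $N{=}4$ Virasoro vector $\sigma$ of Lemma~\ref{lemsigma} coincides with the standard lattice Virasoro vector $\omega$ of $V_L$ (which has central charge $\rank L=6$, matching the value $c=6$ produced by Theorem~\ref{thmVLV}).

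First I would record the arithmetic of $L$. From $(\alpha_i,\alpha_j)=3\delta_{ij}$ and $h=\tfrac13(\alpha_1{+}\dots{+}\alpha_6)$ one gets $(h,h)=2$ and $(h,\alpha_i)=1$, so $\pm h$ are the only roots of $L$ and the $24$ vectors of $L_3$ are exactly the $\pm\alpha_i$ and the $\pm(h-\alpha_i)$; of these, the $\beta$ with $(h,\beta)=1$ are the $\alpha_i$ and the $h-\alpha_i$. The key structural observation --- and the step I expect to take some care to pin down --- is that \emph{no root of $L$ is orthogonal to $h$} (as $(h,h)=2\neq 0$): consequently, in Lemmas~\ref{lemA(1)A} and~\ref{lemP} every sum indexed by ``roots $\gamma\perp h$'' is empty, and the recognition conditions collapse to the single equation of Lemma~\ref{lemP}(a). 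Put $\slt=\langle h,x^{\pm}\rangle$ with $x^{\pm}=\mp e^{\pm h}$. Since $h\in L_2$ is a root, hypothesis (IV) of Theorem~\ref{thmVLV} is automatic and $\slt$ has level $k=1$, so the central charge obtained will be $6k=6$.

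Next I would write down the generators:
\[
\tau^{+}:=\sum_{i=1}^{6}e^{\alpha_i},\qquad
\overline{\tau}^{+}:=-\tfrac13\sum_{i=1}^{6}\varepsilon(h,\alpha_i)\,e^{h-\alpha_i},
\]
together with $\tau^{-}:=x^{-}(0)\tau^{+}$ and $\overline{\tau}^{-}:=x^{-}(0)\overline{\tau}^{+}$; a short computation with the $\varepsilon$-formalism gives $\tau^{-}=\sum_i\varepsilon(h,\alpha_i)e^{\alpha_i-h}$ and $\overline{\tau}^{-}=\tfrac13\sum_i e^{-\alpha_i}$. Because $(h,\alpha_i)=(h,h-\alpha_i)=1$, all four vectors have the appropriate $h(0)$-eigenvalue $\pm1$, and $x^{+}(0)\tau^{+}=x^{+}(0)\overline{\tau}^{+}=0$ since $h+\alpha_i$ and $2h-\alpha_i$ have norm $7$ and so do not occur among the $e^{\beta}$ spanning $(V_L)_{\frac32}=\bigoplus_{\beta\in L_3}\CC e^{\beta}$; hence $A:=\langle\tau^+,\tau^-\rangle$ and $B:=\langle\overline{\tau}^+,\overline{\tau}^-\rangle$ are a pair of $2$-dimensional vector $\slt$-modules, and since $\tau^{\pm},\overline{\tau}^{\pm}$ are supported on pairwise disjoint subsets of $L_3$ they are linearly independent --- this is hypothesis (I). Hypothesis (III) follows at once from Lemma~\ref{lemA(1)A}: part (a) is vacuous, and part (b) vanishes because the support $\{\alpha_i\}$ of $\tau^{+}$ is disjoint from $\{h-\alpha_i\}$, so $A(1)A=0$; likewise $B(1)B=0$. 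For hypothesis (II) I would apply Lemma~\ref{lemP}: part (b) is vacuous, while part (a) reads $h=-\sum_{\alpha}c_{\alpha}d_{h-\alpha}\varepsilon(h,\alpha)\alpha$, which with $c_{\alpha_i}=1$ and $d_{h-\alpha_i}=-\tfrac13\varepsilon(h,\alpha_i)$ becomes $\tfrac13\sum_i\alpha_i=h$. Thus Theorem~\ref{thmVLV} applies and the subalgebra generated by $\tau^{\pm},\overline{\tau}^{\pm}$ is isomorphic to $\Nfour$ with $c=6$.

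Finally I would identify $\sigma=\tfrac12\bigl(\tau^{+}(0)\overline{\tau}^{-}-\tau^{-}(0)\overline{\tau}^{+}\bigr)$ with $\omega$ by a direct computation in $V_L$, just as in Example~1: orthogonality of the $\alpha_i$ kills all cross terms, $e^{\alpha_i}(0)e^{-\alpha_i}=\tfrac12\alpha_i(-1)\alpha_i+\tfrac12\alpha_i(-2)\vac$ (using $\varepsilon(\alpha_i,-\alpha_i)=1$), and $\sum_i\alpha_i=3h$, so $\tau^{+}(0)\overline{\tau}^{-}=\tfrac16\sum_i\alpha_i(-1)\alpha_i+\tfrac12Th$; since $\{\alpha_i/\sqrt3\}$ is an orthonormal basis of $\CC\otimes L$ this equals $\omega+\tfrac12Th$, and a parallel computation gives $\tau^{-}(0)\overline{\tau}^{+}=\tfrac12Th-\omega$, whence $\sigma=\omega$. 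With the one conceptual point (no root of $L$ is orthogonal to $h$) settled, everything else is a short lattice-VOA calculation of the kind already carried out in Example~1; that conceptual point --- together with the choice of $\overline{\tau}^{+}$ that it makes possible --- is what I expect to be the only real obstacle.
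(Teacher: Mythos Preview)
Your proposal is correct and follows essentially the same route as the paper: you apply Theorem~\ref{thmVLV} by observing that $\pm h$ are the only roots (so all sums over roots $\gamma\perp h$ in Lemmas~\ref{lemA(1)A} and~\ref{lemP} are empty), and then verify Lemma~\ref{lemP}(a) by choosing scalars with $c_{\alpha}d_{h-\alpha}\varepsilon(h,\alpha)=-\tfrac13$; the paper does exactly this, leaving the scalars as a one-parameter family where you fix the specific choice $c_{\alpha_i}=1$, $d_{h-\alpha_i}=-\tfrac13\varepsilon(h,\alpha_i)$. Your computation of $\sigma=\omega$ is also the same as the paper's, just organized slightly differently.
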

\begin{proof}
Let $X{\subseteq}L_3$ consist
of the 6 vectors $\alpha_i$.\ In the formalism of Section \ref{SRecog2}, especially (\ref{tauvec}), we take $Y$ to consist
of the vectors $\{h{-}\alpha_i\}$, so that
the four generating states of $A$ of weight $\frac{3}{2}$ will be chosen to take the form
\begin{align*}
&\tau^+{:=}\sum_{\alpha{\in}X}c_{\alpha}e^{\alpha},\quad  \tau^-{:=}\sum_{\alpha{\in}X} c_{\alpha}\varepsilon(h, \alpha)e^{\alpha-h},\\ 
&\overline{\tau}^+{:=}\sum_{\alpha\in X} d_{h{-}\alpha}e^{h-\alpha},\quad
 \overline{\tau}^-{:=}-\sum_{\alpha\in X} d_{h-\alpha}\varepsilon(h, \alpha)e^{-\alpha}.
\end{align*}

We show that the hypotheses of Theorem~\ref{thmVLV} hold for certain choices of scalars 
$c_{\alpha}, d_{h-\alpha}, (\alpha{\in}X)$.\ Conditions (a) and (b) of Lemma~\ref{lemA(1)A} and condition (b)  of  Lemma~\ref{lemP} automatically hold since $X\cap Y=0$ and $\pm h$ are the only roots in $L$.\ We may check by  direct calculation that these facts \emph{imply} the assumptions of these two Lemmas (cf.\ the proofs of the Lemmas), i.e., 
$A(1)A=B(1)B=0$.
Now choose the scalars $c_{\alpha}, d_{h-\alpha}$ so that 
\begin{align*}
c_{\alpha}d_{h-\alpha}\varepsilon(h,  \alpha)=-\frac{1}{3}
\end{align*}
for each $\alpha_i{\in} X$,  implying condition (a) of Lemma~\ref{lemP}.\ Hence $A(1)B\cong \slt$.

\medskip
Because ${\pm h}$ are the only roots of $L$, there is a unique simple component of the Lie algebra
$(V_L)_1$, and it is isomorphic to $sl_2$.\ It follows that this component is our Lie algebra
$A(1)B{=}\slt$, and in particular $h{\in}\slt$ and
$A(1)B{=}\langle h, e^{\pm h}\rangle$.\ It is then straightforward to check that $A$ and $B$ are indeed
vector representations of $\slt$, so that all hypotheses of Theorem \ref{thmVLV} are satisfied.\
This completes the proof that  $\tau^{\pm},\ \overline{\tau}^{\pm}$ generate an $N{=}4$ subalgebra 
of $V_L$ with $c{=}6$.

\medskip
The Virasoro vector in this example is (cf.\ Lemmas~\ref{lemsigma}, \ref{lemvir})
\begin{align*}
\sigma&{=}\frac{1}{2}(\tau^+(0)\overline{\tau}^-{-}\tau^-(0)\overline{\tau}^+)
\\
&{=}-\frac{1}{2}\sum_{\alpha, \beta{\in}X}\Big\{c_{\alpha} d_{h-\beta} \varepsilon(h, \beta)e^{\alpha}(0)e^{-\beta}+ c_{\alpha} d_{h{-}\beta} \varepsilon(h, \alpha)e^{\alpha-h}(0)  e^{h-\beta}\Big\}
\\
&{=}-\frac{1}{2} \sum_{\alpha{\in}X}\Big\{c_{\alpha} d_{h-\alpha} \varepsilon(h, \alpha)e^{\alpha}(0)e^{-\alpha} 
+c_{\alpha} d_{h{-}\alpha} \varepsilon(h, \alpha)e^{\alpha-h}(0)  e^{h-\alpha}\Big\}
\\
&{=}\frac{1}{12} \sum_{\alpha{\in}X}\Big(\alpha(-2) +\alpha(-1)^2 +(\alpha-h)(-2) +(\alpha-h)(-1)^2 \Big)\vac
\\
&{=}\frac{1}{12} \sum_{\alpha{\in}X}\Big(2\alpha(-1)^2 +h(-1)^2 -2\alpha(-1)h(-1)+2\alpha(-2)-h(-2) \Big)\vac
\\
&{=}\frac{1}{6}\sum_{\alpha} \alpha(-1)\alpha.
 \end{align*}
This is indeed the standard Virasoro element  of $V_L$, and the proof is complete.
\end{proof}

\medskip

We next consider three examples of $N{=}2$ superconformal subalgebras of odd lattice SVOAS, illustrating Theorem~\ref{thmUN=2}.

\medskip
\subsection{Example 3.} Consider the   lattice SVOA $V_{\ZZ^3}$, the rank $6$ free fermion construction. Let $L={\ZZ^3}$ be generated by $\gamma_1,\gamma_2,\gamma_3\in L_{1}$ with  $(\gamma_i,\gamma_j)=\delta_{ij}$.  
Define $h,a^{\pm} \in (V_{\ZZ^3})_{1}$ by
\begin{align*}
h&=\gamma_1,\qquad 
a^{\pm}{=}\frac{1}{\sqrt{2}}(\gamma_2\pm i\gamma_3),
\end{align*}
with $a^{+}(1)a^{-}{=}\vac$. 
Lastly, define $\tau^{\pm}{=}a^{\pm}(-1)e^{\pm \gamma_1}$.

Using \eqref{vu0} and \eqref{vu1} we find that Axioms~(I) and (II) of Theorem~\ref{thmUN=2} hold.\
Since $h(1)h{=}\vac$, the central charge is $c{=}3$.\ Using \eqref{eq:N2Vir} one finds that 
\begin{align*}
\sigma{=}a^{+}(-1)a^{-} {+}\frac{1}{2}\gamma_1(-1)\gamma_1=\frac{1}{2}\sum_{i=1}^{3}\gamma_i(-1)\gamma_i,
\end{align*}
the standard lattice Virasoro vector for $V_{\ZZ^3}$.\ This establishes
\begin{prop}  $V_{\ZZ^3}$ contains an $N{=}2$ superconformal subalgebra  with the standard lattice Virasoro vector for central charge $c{=}3$.
\end{prop}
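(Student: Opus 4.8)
The plan is to invoke the $N{=}2$ recognition theorem, Theorem~\ref{thmUN=2}, with $U = V_{\ZZ^3}$ and $V$ the sub-SVOA generated by $\tau^{\pm} = a^{\pm}(-1)e^{\pm\gamma_1}$, and then to identify the resulting Virasoro vector with the standard lattice one by a direct mode computation. So the body of the proof splits into: (i) checking the hypotheses of Theorem~\ref{thmUN=2}, and (ii) computing $\sigma$.

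First I would note that $\tau^{\pm}$ are primary of weight $\frac{3}{2}$: they have weight $1 + \frac{1}{2}$, the states $e^{\pm\gamma_1}$ are annihilated by the positive Virasoro modes, and $a^{\pm}(0)e^{\pm\gamma_1} = \frac{1}{\sqrt 2}\big((\gamma_2, \pm\gamma_1) \pm i(\gamma_3, \pm\gamma_1)\big)e^{\pm\gamma_1} = 0$ since $\gamma_2, \gamma_3 \perp \gamma_1$; hence $L^{U}(1)\tau^{\pm} = a^{\pm}(0)e^{\pm\gamma_1} = 0$ and $L^{U}(n)\tau^{\pm} = 0$ for $n \geq 2$ by weight, so the setup of Theorem~\ref{thmUN=2} applies to $V$. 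For the two axioms I would use the product formulas \eqref{vu1} and \eqref{vu0} of Section~\ref{SExample} together with the lattice rules of Subsection~\ref{VL}. From \eqref{vu1}, $\tau^{+}(1)\tau^{-} = \dlangle a^{+}, a^{-}\drangle\, e^{\gamma_1}(-1)e^{-\gamma_1}$, which equals $\gamma_1 =: h$ once one feeds in $a^{+}(1)a^{-} = \vac$ and the cocycle value $\varepsilon(\gamma_1, -\gamma_1)$; since $\gamma_1(0)$ commutes with $a^{\pm}(-1)$ and $\gamma_1(0)e^{\pm\gamma_1} = \pm e^{\pm\gamma_1}$, this gives $h(0)\tau^{\pm} = \pm\tau^{\pm}$, which is Axiom~(I). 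For Axiom~(II), \eqref{vu0} gives $\tau^{+}(0)\tau^{+} = a^{+}(-1)a^{+}(-1)\, e^{\gamma_1}(0)e^{\gamma_1} + \dlangle a^{+}, a^{+}\drangle\, e^{\gamma_1}(-2)e^{\gamma_1}$, and both terms vanish: $Y(e^{\gamma_1}, z)e^{\gamma_1}$ starts at $z^{(\gamma_1,\gamma_1)} = z^{1}$ so $e^{\gamma_1}(0)e^{\gamma_1} = 0$, while $\dlangle a^{+}, a^{+}\drangle = a^{+}(1)a^{+} = \frac{1}{2}\big((\gamma_2, \gamma_2) - (\gamma_3, \gamma_3)\big) = 0$; similarly $\tau^{-}(0)\tau^{-} = 0$. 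Theorem~\ref{thmUN=2} then gives $V \cong \Ntwo$ of central charge $c = 6k$, and $h(1)h = \gamma_1(1)\gamma_1 = (\gamma_1, \gamma_1)\vac = \vac$ forces $k = \frac{1}{2}$, i.e. $c = 3$.

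It remains to identify the $N{=}2$ Virasoro vector with the standard one. I would compute $\sigma = \frac{1}{2}\big(\tau^{+}(0)\tau^{-} + \tau^{-}(0)\tau^{+}\big)$ from \eqref{eq:N2Vir} by expanding each product via \eqref{vu0}: the $e^{\pm\gamma_1}(0)e^{\mp\gamma_1}$-parts combine (the cocycle signs cancelling between the two summands) to $a^{+}(-1)a^{-}$, and the $\dlangle a^{\pm}, a^{\mp}\drangle\, e^{\pm\gamma_1}(-2)e^{\mp\gamma_1}$-parts to the Heisenberg term $\frac{1}{2}\gamma_1(-1)\gamma_1$, so $\sigma = a^{+}(-1)a^{-} + \frac{1}{2}\gamma_1(-1)\gamma_1$. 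Since the cross terms cancel ($\gamma_2(-1)\gamma_3 = \gamma_3(-1)\gamma_2$), $a^{+}(-1)a^{-} = \frac{1}{2}\big(\gamma_2(-1)\gamma_2 + \gamma_3(-1)\gamma_3\big)$, whence $\sigma = \frac{1}{2}\sum_{i=1}^{3}\gamma_i(-1)\gamma_i$, the standard lattice Virasoro element of $V_{\ZZ^3}$, which finishes the proof.

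I do not anticipate a real obstacle here; the one point that calls for care is the cocycle bookkeeping, namely verifying that the normalization $a^{+}(1)a^{-} = \vac$ together with the relevant value of $\varepsilon(\gamma_1, -\gamma_1)$ really yields $h = +\gamma_1$ rather than some other nonzero scalar multiple. Once the scaling is pinned down, the $\slt$-weight structure, Axioms~(I)--(II), and the identification $\sigma = \omega$ all follow from the same routine mode manipulations used in Examples~1 and~2.
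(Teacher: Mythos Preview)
Your proposal is correct and follows exactly the paper's own argument: verify Axioms (I) and (II) of Theorem~\ref{thmUN=2} via the product formulas \eqref{vu1}, \eqref{vu0}, read off $c=3$ from $h(1)h=\vac$, and then compute $\sigma$ from \eqref{eq:N2Vir} to obtain the standard lattice Virasoro vector. Your caveat about the cocycle sign is well placed---with the paper's convention $\varepsilon(\gamma_1,-\gamma_1)=-1$ one actually gets $\tau^{+}(1)\tau^{-}=-\gamma_1$, so a harmless relabeling (e.g.\ swapping $\tau^{+}\leftrightarrow\tau^{-}$) is needed to match Axiom~(I) on the nose; the rest of the computation, including the identification $\sigma=\tfrac{1}{2}\sum_i\gamma_i(-1)\gamma_i$, is unaffected.
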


\medskip

\noindent
\subsection{Example~4.} In this example we show that every  odd lattice SVOA for which $L_3\neq \emptyset$ contains an $N{=}2$ SVOA with central charge $c{=}1$.
 \begin{prop}\label{thmmoreN=2} Let $\gamma{\in}L_3$ and define $\tau^{\pm}{:=}\frac{1}{\sqrt{3}}e^{\pm\gamma }$ and $h{:=}\frac{1}{3}\gamma$.\ 
Then $\tau^{\pm}$ generate an $N{=}2$ subalgebra of $V_L$ with Virasoro vector 
$\omega{=}\frac{1}{6}\gamma(-1)\gamma$ and $c{=}1$.
\end{prop}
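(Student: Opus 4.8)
The plan is to verify the hypotheses (I) and (II) of Theorem~\ref{thmUN=2} for the specified vectors, and then to compute the Virasoro vector $\sigma$ from \eqref{eq:N2Vir} to identify it with $\omega$. First I would record the basic lattice-VOA formulas (Subsection~\ref{VL}) for products of the form $e^{\mu}(n)e^{\nu}$ when $(\mu,\nu)$ is a small negative integer; here the only relevant case is $\mu=\gamma$, $\nu=-\gamma$ with $(\gamma,-\gamma)=-3$, so that $e^{\gamma}(n)e^{-\gamma}$ is nonzero for $n=0,1,2$ with $e^{\gamma}(2)e^{-\gamma}=\varepsilon(\gamma,-\gamma)\vac=\vac$ (using \eqref{epsvalue}, since $\gamma\in L_3$), $e^{\gamma}(1)e^{-\gamma}=\varepsilon(\gamma,-\gamma)\gamma(-1)\vac=\gamma(-1)\vac$, and $e^{\gamma}(0)e^{-\gamma}=\varepsilon(\gamma,-\gamma)\bigl(\gamma(-2)\vac+\tfrac12\gamma(-1)^2\vac\bigr)=\gamma(-2)\vac+\tfrac12\gamma(-1)^2\vac$.

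Next I would check hypothesis (I): with $h=\tfrac13\gamma$, one computes $\tau^+(1)\tau^- = \tfrac13 e^{\gamma}(1)e^{-\gamma} = \tfrac13\gamma(-1)\vac = h$, so indeed $h=\tau^+(1)\tau^-$; and $h(0)\tau^{\pm} = \tfrac13\gamma(0)e^{\pm\gamma} = \tfrac13(\gamma,\pm\gamma)e^{\pm\gamma} = \pm e^{\pm\gamma}\cdot\tfrac{3}{3}\cdot\tfrac1{\sqrt3}\cdot\sqrt3$\,---\,more carefully, $\gamma(0)e^{\pm\gamma}=(\gamma,\pm\gamma)e^{\pm\gamma}=\pm 3e^{\pm\gamma}$, hence $h(0)\tau^{\pm}=\tfrac13(\pm3)\tau^{\pm}=\pm\tau^{\pm}$. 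Then hypothesis (II): $\tau^{\pm}(0)\tau^{\pm}=\tfrac13 e^{\pm\gamma}(0)e^{\pm\gamma}$, and since $(\pm\gamma,\pm\gamma)=3>-1$ the product $e^{\pm\gamma}(0)e^{\pm\gamma}=0$ by the lattice-VOA product formula (the lowest nonvanishing mode of $Y(e^{\pm\gamma},z)$ applied to $e^{\pm\gamma}$ is $(-(\gamma,\gamma))=-3$, i.e.\ $e^{\pm\gamma}(n)e^{\pm\gamma}=0$ for $n\geq -2$). So (II) holds. At this point Theorem~\ref{thmUN=2} applies and gives $V\cong\Ntwo$ with $c=6k$ where $h(1)h=2k\vac$; and $h(1)h=\tfrac19\gamma(1)\gamma=\tfrac19(\gamma,\gamma)\vac=\tfrac13\vac$, so $2k=\tfrac13$, $k=\tfrac16$, and $c=6k=1$, as claimed.

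Finally I would identify the Virasoro vector. By \eqref{eq:N2Vir}, $\sigma=\tfrac12\bigl(\tau^+(0)\tau^-+\tau^-(0)\tau^+\bigr)$; using skew-symmetry $\tau^-(0)\tau^+=\tau^+(0)\tau^--T(\tau^+(1)\tau^-)=\tau^+(0)\tau^--Th$, one gets $\sigma=\tau^+(0)\tau^--\tfrac12 Th$. Now $\tau^+(0)\tau^-=\tfrac13 e^{\gamma}(0)e^{-\gamma}=\tfrac13\bigl(\gamma(-2)\vac+\tfrac12\gamma(-1)^2\vac\bigr)$, while $Th=\tfrac13\gamma(-2)\vac$ (since $T e^{0}$-type terms: $T(\gamma(-1)\vac)=\gamma(-2)\vac$). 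Hence $\sigma=\tfrac13\gamma(-2)\vac+\tfrac16\gamma(-1)^2\vac-\tfrac16\gamma(-2)\vac=\tfrac16\gamma(-2)\vac+\tfrac16\gamma(-1)^2\vac$. This does not look symmetric yet, so I would instead compute $\sigma$ directly as $\tfrac12(\tau^+(0)\tau^-+\tau^-(0)\tau^+)$ with both terms expanded via the lattice product formula for $e^{\pm\gamma}(0)e^{\mp\gamma}$; the two contributions are $\tfrac13\bigl(\pm\gamma(-2)+\tfrac12\gamma(-1)^2\bigr)\vac$ (the sign of the $\gamma(-2)$ term flipping because of the $\mp\gamma$ in the exponent), so they average to $\tfrac16\gamma(-1)^2\vac=\tfrac16\gamma(-1)\gamma$, which is exactly $\omega$, the standard rank-one lattice Virasoro element attached to the sublattice $\ZZ\gamma$ (rescaled appropriately).

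The main obstacle I anticipate is purely bookkeeping: getting the normalization of the $\varepsilon$-cocycle and the precise lattice-VOA product formulas right (the $\gamma(-2)$ versus $\gamma(-1)^2$ split in $e^{\gamma}(0)e^{-\gamma}$, and the action of $T$), so that the $\gamma(-2)$ terms cancel and the coefficient of $\gamma(-1)\gamma$ comes out to exactly $\tfrac16$. There is no conceptual difficulty; everything reduces to Theorem~\ref{thmUN=2} plus the explicit formulas of Subsection~\ref{VL} and the $\varepsilon$-values \eqref{epsvalue}, and the only care needed is consistency of conventions between this computation and those formulas.
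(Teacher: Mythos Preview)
Your approach is exactly the paper's: verify hypotheses (I) and (II) of Theorem~\ref{thmUN=2}, read off $c=6k=1$ from $h(1)h=\tfrac13\vac$, and compute $\sigma$ via \eqref{eq:N2Vir}.

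The only issue is a bookkeeping slip in the lattice formula: from Subsection~\ref{VL} one has
\[
e^{\gamma}(0)e^{-\gamma}=\tfrac{1}{2}\varepsilon(\gamma,-\gamma)\bigl(\gamma(-2)+\gamma(-1)^2\bigr)\vac=\tfrac{1}{2}\bigl(\gamma(-2)+\gamma(-1)^2\bigr)\vac,
\]
not $\gamma(-2)\vac+\tfrac12\gamma(-1)^2\vac$. This is precisely why your skew-symmetry computation produced a spurious $\tfrac16\gamma(-2)$ term (with the correct formula, $\tau^+(0)\tau^-=\tfrac16(\gamma(-2)+\gamma(-1)^2)\vac$ and $\tfrac12 Th=\tfrac16\gamma(-2)\vac$ cancel cleanly). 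Your direct averaging happened to give the right answer only because the erroneous extra $\tfrac12\gamma(-2)$ terms come with opposite signs; but note that with your stated formula $\tau^+(0)\tau^--\tau^-(0)\tau^+=\tfrac{2}{3}\gamma(-2)\neq Th$, violating super skew-symmetry. Fix that one coefficient and both routes agree with $\sigma=\tfrac16\gamma(-1)\gamma$.
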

\begin{proof} 
We  find $h{=}\tau^{\pm}(1)\tau^{\mp}$  with $h(0)\tau^{\pm}{=}\pm \tau^{\pm}$ and $\tau^{\pm}(0)\tau^{\pm}{=}0$. Thus  Axioms~(I) and (II) of  Theorem~\ref{thmUN=2} hold, so $\tau^{\pm}$ generate an $N{=}2$ superconformal algebra.\  Since $h(1)h{=} \frac{1}{3}\vac$,  the central charge is $c{=}1$ with  Virasoro vector $\frac{1}{6}\gamma(-1)\gamma$ from \eqref{eq:N2Vir}. 
\end{proof}

\noindent
\subsection{Example~5.}
\begin{prop}\label{thmN=2B} Let $\alpha, \beta{\in}L_3$ with $(\alpha, \beta){=}1$, 
and let $\lambda, \mu$ be nonzero scalars.\  Define
\begin{align*}
\tau^{+}&{=}\frac{1}{2}(\lambda e^{\alpha}{+}\mu e^{\beta}),\quad 
 \tau^-{=}\frac{1}{2}(\lambda^{-1}e^{-\alpha}{+}\mu^{-1} e^{-\beta}),\quad
h{=}\frac{1}{4}(\alpha{+}\beta),\\
\omega&{=}\frac{1}{8}\left\{\alpha(-1)\alpha{+}\beta(-1)\beta{+}
\frac{2\lambda}{\mu} \varepsilon(\alpha, \beta)e^{\alpha-\beta}{+}\frac{2\mu}{\lambda} 
\varepsilon(\beta, \alpha)e^{\beta-\alpha}\right\}.
\end{align*}
Then
$\tau^{\pm}$ generate an $N{=}2$ subalgebra of $V_L$ with $c{=}\frac{3}{2}$.
\end{prop}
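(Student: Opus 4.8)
The plan is to apply Theorem~\ref{thmUN=2} with $U{:=}V_L$: one checks that $\tau^{\pm}$ are primary of weight $\frac{3}{2}$ and that Axioms~(I) and (II) hold, then reads off $c$ and compares the Virasoro vector $\sigma$ of \eqref{eq:N2Vir} with the asserted $\omega$. Primarity is immediate, since each of $e^{\pm\alpha},e^{\pm\beta}$ is a lowest-weight vector for the standard lattice Virasoro element of $V_L$ (annihilated by $L^{V_L}(n)$ for all $n{\geq}1$) of weight $\frac{(\alpha,\alpha)}{2}{=}\frac{3}{2}$, hence so is every linear combination, in particular $\tau^{\pm}$.

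For the two axioms I would use the lattice-VOA product formulas of Subsection~\ref{VL}, chiefly the degree bound $e^{\gamma}(n)e^{\delta}{=}0$ for $n{>}{-}(\gamma,\delta){-}1$, together with $\varepsilon(\gamma,{\pm}\gamma){=}1$ for $\gamma{\in}L_3$ from \eqref{epsvalue}. Since $(\alpha,-\beta){=}(\beta,-\alpha){=}{-}1$, the cross terms $e^{\alpha}(1)e^{-\beta}$ and $e^{\beta}(1)e^{-\alpha}$ vanish, while $e^{\alpha}(1)e^{-\alpha}{=}\alpha$ and $e^{\beta}(1)e^{-\beta}{=}\beta$, so $\tau^{+}(1)\tau^{-}{=}\frac{1}{4}(\alpha{+}\beta){=}h$; and because $(h,\alpha){=}(h,\beta){=}1$ we get $h(0)e^{\pm\alpha}{=}{\pm}e^{\pm\alpha}$ and $h(0)e^{\pm\beta}{=}{\pm}e^{\pm\beta}$, hence $h(0)\tau^{\pm}{=}{\pm}\tau^{\pm}$, i.e.\ Axiom~(I). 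For Axiom~(II) the inner products occurring in $\tau^{\pm}(0)\tau^{\pm}$ are all positive, namely $({\pm}\alpha,{\pm}\alpha){=}3$ and $({\pm}\alpha,{\pm}\beta){=}1$, so each product $e^{\pm\alpha}(0)e^{\pm\alpha}$, $e^{\pm\alpha}(0)e^{\pm\beta}$ vanishes by the degree bound, whence $\tau^{\pm}(0)\tau^{\pm}{=}0$. Theorem~\ref{thmUN=2} then gives that $\tau^{\pm}$ generate $\Ntwo$ with $c{=}6k$, where $h(1)h{=}2k\vac$; since $h(1)h{=}\frac{1}{16}(\alpha{+}\beta,\alpha{+}\beta)\vac{=}\frac{1}{2}\vac$, we get $k{=}\frac{1}{4}$ and $c{=}\frac{3}{2}$.

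It remains to identify $\sigma{=}\frac{1}{2}\big(\tau^{+}(0)\tau^{-}{+}\tau^{-}(0)\tau^{+}\big)$ with $\omega$. Expanding $\tau^{\pm}(0)\tau^{\mp}$, the diagonal products $e^{\pm\alpha}(0)e^{\mp\alpha}{=}\frac{1}{2}\big({\pm}\alpha(-2){+}\alpha(-1)^2\big)\vac$ (and likewise for $\beta$) contribute the Heisenberg part, the $\alpha(-2)$ and $\beta(-2)$ terms cancelling between $\tau^{+}(0)\tau^{-}$ and $\tau^{-}(0)\tau^{+}$ and leaving $\frac{1}{8}\big(\alpha(-1)\alpha{+}\beta(-1)\beta\big)$, while the off-diagonal products $e^{\pm\alpha}(0)e^{\mp\beta}$ and $e^{\mp\alpha}(0)e^{\pm\beta}$ produce multiples of $e^{\alpha-\beta}$ and $e^{\beta-\alpha}$ summing to $\frac{1}{8}\big(\varepsilon(\alpha,\beta){+}\varepsilon(\beta,\alpha)\big)\big(\frac{\lambda}{\mu}e^{\alpha-\beta}{+}\frac{\mu}{\lambda}e^{\beta-\alpha}\big)$. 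Invoking the $\varepsilon$-cocycle commutation relation of Subsection~\ref{SSepsilon}, which here gives $\varepsilon(\alpha,\beta){=}\varepsilon(\beta,\alpha)$ because $(\alpha,\beta){+}(\alpha,\alpha)(\beta,\beta){=}1{+}9$ is even, the off-diagonal contribution becomes $\frac{1}{8}\big(\frac{2\lambda}{\mu}\varepsilon(\alpha,\beta)e^{\alpha-\beta}{+}\frac{2\mu}{\lambda}\varepsilon(\beta,\alpha)e^{\beta-\alpha}\big)$, so that $\sigma{=}\omega$, as asserted.

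The step I expect to take the most care is this last one: keeping track of the $\varepsilon$-signs in the off-diagonal products $e^{\pm\alpha}(0)e^{\mp\beta}$, and using $\varepsilon(\alpha,\beta){=}\varepsilon(\beta,\alpha)$, which is precisely what yields the coefficients $\frac{2\lambda}{\mu}\varepsilon(\alpha,\beta)$ and $\frac{2\mu}{\lambda}\varepsilon(\beta,\alpha)$ in $\omega$ rather than a symmetrized average. Everything else is routine manipulation with the lattice-VOA formulas.
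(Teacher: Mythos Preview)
Your proposal is correct and follows exactly the approach of the paper: verify Axioms~(I) and (II) of Theorem~\ref{thmUN=2}, compute $h(1)h{=}\tfrac{1}{2}\vac$ to obtain $c{=}\tfrac{3}{2}$, and then identify the Virasoro vector $\sigma$ of \eqref{eq:N2Vir} with the stated $\omega$. The paper's proof simply asserts these verifications; you have supplied the details, including the key observation $\varepsilon(\alpha,\beta){=}\varepsilon(\beta,\alpha)$ needed to match the coefficients in $\omega$.
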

\begin{proof}
Axioms~(I) and (II) of Theorem~\ref{thmUN=2} are easily seen to hold. $h(1)h= \frac{1}{2}\vac$ implies  the central charge $c{=}\frac{3}{2}$ and  $\omega$ is as given on applying \eqref{eq:N2Vir}.
\end{proof}

\medskip\noindent

%%%%%%%%%%%%%%%%%%%%%%%%%%%%%%%
\section{Appendices}\label{SAppendix}

\subsection{Axioms for super VOAs} 
The underlying Fock space is a $\half\ZZ$--graded
 $\CC$-linear super vector space 
\begin{align*}
V{=} \oplus_{k\in\half \ZZ}V_{k},
\end{align*}
with \emph{parity} operator $p(u)=  2k\bmod 2$ for $u\in V_{k}$.
Each state $u{\in}V$ has a vertex operator $Y(u, z){=}\sum_{n{\in}\ZZ} u(n)z^{-n-1}$;\ $u(n){\in}\End(V)$ is the $n^{th}$ \emph{mode} of $u$.

\medskip
There is a distinguished \emph{vacuum state} $\mathbf{1}{\in}V_0$ with vertex operator
$Y(\mathbf{1}, z){=}\Id_V$; and a distinguished \emph{Virasoro state}
$\omega{\in}V_2$ with vertex operator
\begin{align*}
Y(\omega, z){=}\sum_{n{\in}\ZZ} L(n)z^{-n-2},
\end{align*}
whose modes satisfy the Virasoro relations with \emph{central charge} $c${:}
\begin{align*}
[L(m), L(n)]{=}(m{-}n)L(m{+}n)+\frac{1}{2}\binom{m{+}1}{3}\delta_{m+n, 0}c \Id_V.
\end{align*}
We distinguish the endomorphism $T{\in}\End(V)$ defined by $T(u){:=}u(-2)\mathbf{1}{=}L(-1)u$. 
The $\half \ZZ$ grading is determined by $L(0)$ with $L(0)u=ku$ for $u\in V_k$.

\medskip
Modes satisfy the following axioms, the third being the super Jacobi identity{:}
\begin{align*}
&(a)\ u(n)v=0\ \mbox{for all}\ n{\geq}n_0,\\
&(b)\ u(-1)\mathbf{1}{=}u;\ u(n)\mathbf{1}{=}0\ \mbox{for}\ n{\geq}0,\\
&(c)\ \forall\, r, s, t{\in}\ZZ, \\
&\ \ \ \ \ \ \ \ \ \ \ \ \ \ \ \ \ \ \ \  \sum_{i\geq 0} \binom{r}{i}(u(t{+}i)v)(r{+}s{-}i) w= \\
&\ \ \ \ \ \sum_{i\geq 0}(-1)^i \binom{t}{i} \left\{u(r{+}t{-}i)v(s{+}i){-}(-1)^{t{+}p(u)p(v)}v(s{+}t{-}i)u(r{+}i)\right\}w,
\end{align*}
for all $u,v,w\in V$.
The special cases $t{=}0, r{=}0$ give respectively super commutativity
\begin{align*}
u(r)v(s){-}(-1)^{p(u)p(v)}v(s)u(r){=} \sum_{i\geq 0} \binom{r}{i}(u(i)v)(r{+}s{-}i),
\end{align*}
and super associativity
\begin{align*}
&\ \ \ \  (u(t)v)(s) {=} \sum_{i\geq 0}(-1)^i \binom{t}{i} \left\{u(t{-}i)v(s{+}i){-}({-}1)^{t{+}p(u)p(v)}v(s{+}t{-}i)u(i)\right\}.
\end{align*}
Taking $r{=}{-}1, s{=}0, w{=}\mathbf{1}$ leads to super skew-symmetry
\begin{align*}
v(t)u{=} {-}(-1)^{t{+}p(u)p(v)}\sum_{i\geq 0}(-1)^iT^iu(t{+}i)v.
\end{align*}

\subsection{The $\varepsilon$-formalism}\label{SSepsilon}
Fix a finitely generated free abelian group $L$.\ We are interested
in groups $\widehat{L}$
which are \emph{central extensions of $L$ by $\ZZ_2$}.\ 
So there is a short exact sequence of groups
\begin{align}\label{ses}
1\rightarrow \{\pm 1\}\rightarrow \widehat{L} \stackrel{\overline{\;\;}}{\rightarrow} L\rightarrow 0,
\end{align}
and $\widehat{L}$ can be identified with $L{\times} \{\pm 1\}$ as a set, with multiplication
\begin{align*}
(\alpha, e)(\beta, f) = (\alpha{+}\beta, \varepsilon(\alpha, \beta)ef)\qquad(\alpha, \beta{\in} L, e, f{\in} \{{\pm} 1\}),
\end{align*}
where
\begin{eqnarray*}
\varepsilon{:}L{\times}L\rightarrow \{\pm 1\}.
\end{eqnarray*}
We may, and shall, take $\varepsilon$
to be \emph{bimultiplicative}, i.e.,
\begin{align*}
\varepsilon(\alpha{+}\beta, \gamma)=\varepsilon(\alpha, \gamma)\varepsilon(\beta, \gamma),
\quad \varepsilon(\alpha, \beta{+}\gamma)=\varepsilon(\alpha, \beta)\varepsilon(\alpha, \gamma).
\end{align*}
This ensures that $\varepsilon{\in}Z^2(L, \{\pm 1\})$ is a \emph{2-cocycle} and that
 multiplication in $L$ is \emph{associative}.
In particular we note that $\varepsilon(\alpha, 0){=}\varepsilon(0, \alpha){=}1$ and  
$\varepsilon(\alpha, \beta){=}\varepsilon(\alpha, -\beta){=}\varepsilon(-\alpha, \beta)$.

\medskip
If $L$ is a positive-definite integral lattice with bilinear form $(\ , \ )$, then we may further choose $\varepsilon$ (\cite{K} P.\ 155) so that it satisfies
\begin{align*}
\varepsilon(\alpha,\beta)\varepsilon(\beta, \alpha)=(-1)^{(\alpha, \beta)+(\alpha,\alpha)(\beta, \beta)},
\ \ \ \varepsilon(\alpha, \alpha) = (-1)^{((\alpha, \alpha)+(\alpha, \alpha)^2)/2}.
\end{align*}

\begin{rmk} Depending on context, various choices for $\varepsilon$ are used in the literature, although they give equivalent theories.\
The one used in
\cite{FLM}, for example, is different to the one we generally use here.
\end{rmk}

\subsection{Super lattice theories}\label{VL}
Let $L$ be a positive-definite integral lattice equipped with a bilinear form $(\ ,\ )$, with
$\varepsilon$ as in Subsection \ref{SSepsilon}.\ The
\emph{twisted group algebra} $\CC^{\varepsilon}[L]$ has basis $e^{\alpha} (\alpha{\in}L)$ and multiplication
\begin{align*}
&e^{\alpha}e^{\beta}=\varepsilon(\alpha,\beta)e^{\alpha+\beta}\ \ \ \ (\alpha,\beta\in L).
\end{align*}
It is $\half \ZZ$-graded by $wt(e^{\alpha}){:=}\half(\alpha, \alpha)$.

\medskip
There are Lie algebras (the first is abelian)
\begin{align*}
&\mathfrak{h}{:=}\CC\otimes_{\mathbb{Z}} L,\ \  \widehat{\mathfrak{h}}{:=}\mathfrak{h}\otimes \CC[t,t^{-1}]\oplus\CC c,\ \ \mathfrak{h}^+=\mathfrak{h}\otimes t\CC[t], \ \  \widehat{\mathfrak{h}}^-=\mathfrak{h}\otimes t^{-1}\CC[t^{-1}],
 \end{align*}
with brackets 
$[x\otimes t^m,y\otimes t^n]{=}(x,y) m\delta_{m+n,0}c,~[c,\widehat{\mathfrak{h}}]{=}0$,\
and an induced $\widehat{\mathfrak{h}}$-module
$$M(1)=U(\widehat{\mathfrak{h}})\otimes_{U(\mathfrak{h}\otimes \CC [t]\oplus \CC c)}\CC \cong S(\widehat{\mathfrak{h}}^-) \text{ (linearly)},$$
$\mathfrak{h}\otimes \CC[t]$ acting trivially on $\CC$ and $c$ acting as 1.\  Fock space for the lattice theory is
$$V_L=M(1){\otimes}\CC^{\varepsilon}[L]\cong S(\widehat{\mathfrak{h}}^-){\otimes} \CC [L]~\text{(linearly)}$$
with the usual tensor product grading.\ The Virasoro vector is $\half \sum_i h_i(-1)h_i$, the sum ranging
over \emph{any} orthonormal basis $\{h_i\}$ of $\mathfrak{h}$.

\medskip
 For $\alpha{\in}\mathfrak{h}$ write $\alpha(n){:=}\alpha\otimes t^n,\ \alpha(z){:=}\sum_{n\in\mathbb{Z}}\alpha(n)z^{-n-1}, z^{\alpha}{:}e^{\beta}{\mapsto} z^{(\alpha, \beta)}e^{\beta}$, and set
$$Y(e^{\alpha},z){:=} \exp\left(\sum_{m=1}^{\infty}\alpha(-m)\frac{z^m}{m}\right)\exp\left(-\sum_{m=1}^{\infty}\alpha(m)\frac{z^{-m}}{m}\right)e^{\alpha}z^{\alpha},$$
and for
$v=\alpha_1(-n_1)...\alpha_k(-n_k){\otimes}e^{\alpha}\in V_L\ (n_i{\geq}1)$ set
\begin{align*}
&Y(v,z){:=}{:}\left(\frac{1}{(n_1-1)!}\left(\frac{d}{dz}\right)^{n_1-1}\alpha_1(z)\right)...\left(\frac{1}{(n_k-1)!}\left(\frac{d}{dz}\right)^{n_k-1}\alpha_k(z)\right)Y(e^{\alpha},z){:},
\end{align*}
with the usual normal ordering conventions.

%\subsection{Vertex operators for $e^{\alpha}$}
%We have
%\begin{align*}
%Y(e^{\gamma}, z) &= e^{\gamma}z^{\gamma} exp\left(-\sum_{j<0} \frac{z^{-j}}{j}\gamma(j)\right)exp\left(-\sum_{j>0} \frac{z^{-j}}{j}\gamma(j)\right)\\
%&=e^{\gamma}z^{\gamma} exp\left(\sum_{j>0} \frac{z^{j}}{j}\gamma(-j)\right)exp\left(\sum_{j<0} \frac{z^{j}}{j}\gamma(-j)\right).
%\end{align*}

%Let $\gamma, \rho\in N_0$.\ Then
%\begin{align*}
%Y(e^{\gamma}, z)e^{\rho} 
 %&=z^{(\gamma, \rho)}e^{\gamma}exp\left(\sum_{j>0} \frac{z^{j}}{j}\gamma(-j)\right)e^{\rho}\\
% &=\varepsilon(\gamma, \rho) z^{(\gamma, \rho)}exp\left(\sum_{j>0} \frac{z^{j}}{j}\gamma(-j)\right)e^{\gamma+\rho}.
%\end{align*}
\medskip
For $\gamma, \rho{\in}L$ we have
\begin{align*}
e^{\gamma}(n)e^{\rho}= \left \{ \begin{array}{cccc}
0 & \mbox{if}\ (\gamma, \rho)\geq -n \\
\varepsilon(\gamma, \rho)e^{\gamma+\rho}   &\ \ \ \ \ \  \mbox{if}\ (\gamma, \rho)=-n-1 \\
\varepsilon(\gamma, \rho)\gamma(-1)e^{\gamma+\rho}   &\ \ \ \ \ \  \mbox{if}\ (\gamma, \rho)=-n-2 \\
\frac{1}{2}\varepsilon(\gamma, \rho)(\gamma(-2)e^{\gamma+\rho}+\gamma(-1)^2e^{\gamma+\rho})   &\ \ \ \ \ \  \mbox{if}\ (\gamma, \rho)=-n-3
  \end{array} \right.
\end{align*}

\bigskip

\end{document}